\newcommand{\E}{\mathbb{E}}
\newcommand{\PP}{\mathbb{P}}
\newcommand{\R}{\mathbb{R}}
\newcommand{\N}{\mathbb{N}}
\newtheorem{theo}{Theorem}[section]
\newtheorem{rem}[theo]{Remark}
\newtheorem{propo}[theo]{Proposition}
\newtheorem{lemma}[theo]{Lemma}
\newtheorem{hyp}[theo]{Assumption}
\begin{document}

\title{Weak convergence rates of splitting schemes for the stochastic Allen-Cahn equation}

\author{Charles-Edouard Br\'ehier}
\address{Univ Lyon, CNRS, Université Claude Bernard Lyon 1, UMR5208, Institut Camille Jordan, F-69622 Villeurbanne, France}
\email{brehier@math.univ-lyon1.fr}

\author{Ludovic Gouden\`ege}
\address{Université Paris-Saclay, CNRS - FR3487, Fédération de Mathématiques de CentraleSupélec, CentraleSupélec, 3 rue Joliot Curie, F-91190 Gif-sur-Yvette, France}
\email{ludovic.goudenege@math.cnrs.fr}

\keywords{Stochastic Partial Differential Equations, splitting schemes, Allen-Cahn equation, weak convergence, Kolmogorov equation}
\subjclass{60H15;65C30;60H35}

\date{}


\begin{abstract}

This article is devoted to the analysis of the weak rates of convergence of schemes introduced by the authors in a recent work~\cite{Brehier_Goudenege:18}, for the temporal discretization of the stochastic Allen-Cahn equation driven by space-time white noise. The schemes are based on splitting strategies and are explicit. We prove that they have a weak rate of convergence equal to $\frac12$, like in the more standard case of SPDEs with globally Lipschitz continuous nonlinearity.

To deal with the polynomial growth of the nonlinearity, several new estimates and techniques are used. In particular, new regularity results for solutions of related infinite dimensional Kolmogorov equations are established. Our contribution is the first one in the literature concerning weak convergence rates for parabolic semilinear SPDEs with non globally Lipschitz nonlinearities.
\end{abstract}

\maketitle

\section{Introduction}\label{sec:intro}

In this article, we study numerical schemes introduced in by the authors in~\cite{Brehier_Goudenege:18}, for the temporal discretization of the stochastic Allen-Cahn equation,
\[
\frac{\partial X(t,\xi)}{\partial t}=\frac{\partial^2 X(t,\xi)}{\partial \xi^2}+X(t,\xi)-X(t,\xi)^3+\dot{W}(t,\xi),~t\ge 0,\xi\in(0,1),
\]
driven by Gaussian space-time white noise, with homogeneous Dirichlet boundary conditions.

This Stochastic Partial Differential Equation (SPDE) has been introduced in~\cite{Allen_Cahn:79} as a model for a two-phase system driven by the Ginzburg-Landau energy
\[
\mathcal{E}(X) = \int |\nabla X|^{2}+ V(X),
\]
where $X$ is the ratio of the two species densities, and $V(X)=(X^2-1)^2$ is a double well potential. The first term in the energy models the diffusion of the interface between the two pure phases, and the second one pushes the solution to two possible stable states $\pm 1$ (named the pure phases, i.e. minima of $V$). The stochastic version of the Allen-Cahn equation models the effect of thermal perturbations by an additional noise term.

\medskip

The objective of this article is to study weak rates of convergence for two examples of splitting schemes schemes introduced in~\cite{Brehier_Goudenege:18}. Let the SPDE be rewritten in the framework of~\cite{DaPrato_Zabczyk:14}, with $X(t)=X(t,\cdot)$:
\[
dX(t)=AX(t)dt+\bigl(X(t)-X(t)^3\bigr)dt+dW(t),
\]
where $\bigl(W(t)\bigr)_{t\ge 0}$ is a cylindrical Wiener process. If $\Delta t>0$ denotes the time-step size of the integrator, the numerical schemes are defined as
\begin{equation}\label{eq:schemeINTRO}
\begin{aligned}
X_{n+1}^{{\rm exp}}=e^{\Delta tA}\Phi_{\Delta t}(X_n^{{\rm exp}})+\int_{n\Delta t}^{(n+1)\Delta t}e^{((n+1)\Delta t-t)A}dW(t),\\
X_{n+1}^{{\rm imp}}=S_{\Delta t}\Phi_{\Delta t}(X_n^{{\rm imp}})+S_{\Delta t}\bigl(W((n+1)\Delta t)-W(n\Delta t)\bigr).
\end{aligned}
\end{equation}
In both schemes, $\bigl(\Phi_t\bigr)_{t\ge 0}$ is the flow map associated with the ODE $\dot{z}=z-z^3$, which is known exactly, see~\eqref{eq:PhiPsi}, and at each time step, one solves first the equation $dX(t)=\bigl(X(t)-X(t)^3\bigr)dt$, second compute an approximation for the equation $dX(t)=AX(t)dt+dW(t)$. In the first scheme in~\eqref{eq:schemeINTRO}, the second step is also solved exactly, using an exponential integrator and exact sampling of the stochastic integral. In the second scheme, the second step is solved using a linear implicit Euler scheme, with $S_{\Delta t}=(I-\Delta tA)^{-1}$.

The schemes given by~\eqref{eq:schemeINTRO} have been introduced by the authors in~\cite{Brehier_Goudenege:18}, where two preliminary results were established: the existence of moment estimates for the numerical solution, with bounds uniform in the time-step size parameter $\Delta t$, and the mean-square convergence of the scheme, with no order of convergence. In~\cite{Brehier_Cui_Hong:18}, it was established that the first scheme in~\eqref{eq:schemeINTRO} has strong order of convergence $\frac14$, based on a nice decomposition of the error. The contribution of the present article is to prove that both schemes in~\eqref{eq:schemeINTRO} have a weak order of convergence $\frac12$.

Numerical schemes for SPDEs have been extensively studied in the last two decades, see for instance the monographs~\cite{Jentzen_Kloeden:11,Kruse:14,Lord_Powell_Shardlow:14}. We recall that two notions of convergence are usually studied: strong convergence refers to convergence in mean-square sense, whereas weak convergence refers to convergence of distributions. If sufficiently regular test functions are considered, it is usually the case that the weak order of convergence is twice the strong order. For one-dimensional parabolic semilinear SPDEs driven by space-time white noise, the solutions are only H\"older continuous in time with exponent $\alpha<1/4$, hence one expects a strong order of convergence equal to $\frac14$ and a weak order equal to $\frac12$.

In the case of globally Lipschitz continuous nonlinearities, this result has been proved in recent years for a variety of numerical schemes, exploiting different strategies for the analysis of the weak error: analysis of the error using the associated Kolmogorov equation, see~\cite{Andersson_Larsson:16,Brehier_Debussche:17,Debussche:11,Debussche_Printems:09,Wang_Gan:13}, using the mild It\^o formula approach, see~\cite{Conus_Jentzen_Kurniawan:14,Hefter_Jentzen_Kurniawan:16,Jentzen_Kurniawan:15}, or other techniques, see \cite{Andersson_Kruse_Larsson:16,Wang:16}. In the case of SPDEs with non globally Lipschitz continuous nonlinearities, standard integrators, which treat explictly the nonlinearity, cannot be used. Design and analysis of appropriate schemes, is an active field of research, in particular concerning the stochastic Allen-Cahn equation, see \cite{Becker_Gess_Jentzen_Kloeden:17,Brehier_Cui_Hong:18,Brehier_Goudenege:18,Kovacs_Larsson_Lindgren:15_1,Kovacs_Larsson_Lindgren:15_2,Liu_Qiao:17,Liu_Qiao:18,Majee_Prohl:17,Wang:18}. Several strategies may be employed: splitting, taming, split-step methods have been proposed and studied in those references. Only strong convergence results have been obtained so far. Up to our knowledge, only the preliminary results in the PhD Thesis~\cite{Kopec} deal with the analysis of the weak error, for split-step methods using an implicit discretization of the nonlinearity. Hence, the present article is to present detailed analysis of the weak error for a class of SPDEs with non globally Lipschitz continuous nonlinearity.

\medskip

The main results of this article are Theorems~\ref{th:weak_expo} and~\ref{th:weak_implicit}, which may be stated as follows: for sufficiently smooth test functions $\varphi$ (see Assumption~\ref{ass:varphi}), and all $\alpha\in[0,\frac12)$, $T\in(0,\infty)$,
\[
\big|\E\bigl[\varphi(X(N\Delta t))\bigr]-\E\bigl[\varphi(X_N)\bigr]\big|\le C_\alpha(T)\Delta t^\alpha.
\]
In the case of the second scheme in~\eqref{eq:schemeINTRO}, numerical experiments were reported in~\cite{Brehier_Goudenege:18} to motivate and illustrate this convergence result.

As in the case of SPDEs with globally Lipschitz continuous nonlinearities, driven by a cylindrical Wiener process, the order of convergence is $\frac12$, whereas the strong order of convergence is only $\frac14$ in general. However, several points in the analysis are original and need to be emphasized. First, moment estimates for the numerical solution, which are non trivial in the case of numerical schemes for equations with non globally Lipschitz nonlinearities, are required.  They were proved by the authors in~\cite{Brehier_Goudenege:18} for~\eqref{eq:schemeINTRO}. The analysis of the error is based on the decomposition of the error using the solution of related Kolmogorov equations, see~\eqref{eq:Kolmo_Deltat} below and Section~\ref{sec:weak} for a description of the method. Appropriate regularity properties need to be proved for the first and second order spatial derivatives: this is presented in Theorems~\ref{th:Du} and~\ref{th:D2u}, which are auxiliary results in the analysis, but may have a more general interest. The proofs of our main results use different strategies. The proof of Theorem~\ref{th:weak_expo} is shorter, due to the use of a nice auxiliary continuous-time process, and of appropriate temporal and spatial regularity properties of the process, following~\cite{Wang:16} and~\cite{Wang:18}. The proof of Theorem~\ref{th:weak_implicit} follows essentially the same steps as in~\cite{Debussche:11} and related references, in particular a duality formula from Malliavin calculus is used. The estimate of the Malliavin derivative given in Lemma~\ref{lem:malliavin} uses in an essential way the structure as a splitting scheme, and is one of the original results used in this work.

\medskip

This article is organized as follows. Assumptions, equations and numerical schemes are given in Section~\ref{sec:setting}. Our main results, Theorems~\ref{th:weak_expo} and~\ref{th:weak_implicit} are stated in Section~\ref{sec:weak}. The regularity properties for solutions of Kolmogorov equations are stated and proved in Section~\ref{sec:kolmo}. Section~\ref{sec:expo} is devoted to the proof of Theorem~\ref{th:weak_expo}, whereas Section~\ref{sec:impl} is devoted to the proof of Theorem~\ref{th:weak_implicit}.


\section{Setting}\label{sec:setting}

We work in the standard framework of stochastic evolution equations with values in infinite dimensional separable Hilbert and Banach spaces. We refer for instance to~\cite{Cerrai:01, DaPrato_Zabczyk:14} for details. Let $H=L^2(0,1)$, and $E=\mathcal{C}([0,1])$. We use the following notation for inner product and norms: for $x_1,x_2\in H$, $x\in E$,
\[
\langle x_1,x_2\rangle =\int_{0}^{1}x_1(\xi)x_2(\xi)d\xi~,\quad |x_1|_H=\langle x_1,x_1\rangle^{\frac12}~,\quad |x|_E=\underset{\xi\in[0,1]}\max |x(\xi)|.
\]
For $p\in[1,\infty]$, we also use the notation $L^p=L^p(0,1)$ and $|\cdot|_{L^p}$ for the associated norm.

\subsection{Assumptions}

\subsubsection{Linear operator}

Let $A$ denote the unbounded linear operator on $H$, with
\[
\begin{cases}
D(A)=H^2(0,1)\cap H_0^1(0,1),\\
Ax=\partial_\xi^2x,~\forall~x\in D(A).
\end{cases}
\]

Let $e_n=\sqrt{2}\sin(n\pi\cdot)$ and $\lambda_n=n^2\pi^2$, for $n\in\N$. Note that $Ae_n=-\lambda_n e_n$, and that $\bigl(e_n\bigr)_{n\in\N}$ is a complete orthonormal system of $H$. In addition, for all $n\in\N$, $|e_n|_E\le \sqrt{2}$.

The linear operator $A$ generates an analytic semi-group $\bigl(e^{tA}\bigr)_{t\ge 0}$, on $L^p$ for $p\in[2,\infty)$ and on $E$. For $\alpha\in(0,1)$, the linear operators $(-A)^{-\alpha}$ and $(-A)^{\alpha}$ are constructed in a standard way, see for instance~\cite{Pazy}:
\begin{gather*}
(-A)^{-\alpha}=\frac{\sin(\pi \alpha)}{\pi}\int_{0}^{\infty}t^{-\alpha}(tI-A)^{-1}dt~,~(-A)^{\alpha}=\frac{\sin(\pi \alpha)}{\pi}\int_{0}^{\infty}t^{\alpha-1}(-A)(tI-A)^{-1}dt,
\end{gather*}
where $(-A)^{\alpha}$ is defined as an unbounded linear operator on $L^p$. In the case $p=2$, note that
\begin{gather*}
(-A)^{-\alpha}x=\sum_{i\in \N^{\star}}\lambda_i^{-\alpha} \langle x,e_i\rangle e_i, \quad x\in H,\\
(-A)^{\alpha}x=\sum_{i\in \N^{\star}}\lambda_i^\alpha \langle x,e_i\rangle e_i, \quad x\in D_2\bigl((-A)^{\alpha}\bigr)=\left\{x\in H ; \sum_{i=1}^{\infty}\lambda_{i}^{2\alpha}\langle x,e_i\rangle^2<\infty\right\}.
\end{gather*}

We denote by $\mathcal{L}(H)$  the space of bounded linear operators from $H$ to $H$ , with associated norm denoted by $\|\cdot\|_{\mathcal{L}(H)}$. The space of Hilbert-Schmidt operators on $H$ is denoted by $\mathcal{L}_2(H)$, and the associated norm is denoted by $\|\cdot\|_{\mathcal{L}_2(H)}$.

To conclude this section, we state several useful functional inequalities. Inequality~\eqref{eq:inequality1} is a consequence of the Sobolev embedding $W^{2\eta,2}(0,1)\subset \mathcal{C}([0,1])$ when $2\eta>\frac{1}{2}$, and of the equivalence of the norms $W^{2\eta,2}(0,1)$ and $|(-A)^{\eta}\cdot|_{L^2}$. For inequalities~\eqref{eq:inequality2} and~\eqref{eq:inequality3}, we refer to~\cite{Triebel} for the general theory, and to the arguments detailed in~\cite{Brehier_Debussche:17}.

\begin{itemize}
\item For every $\eta>\frac14$, there exists $C_\eta\in(0,\infty)$ such that
\begin{equation}\label{eq:inequality1}
|(-A)^{-\eta}\cdot|_{L^2}\le C_\eta|\cdot|_{L^1}.
\end{equation}

\item For every $\alpha\in(0,\frac12)$, $\epsilon>0$, with $\alpha+\epsilon<\frac12$, there exists $C_{\alpha,\epsilon}\in(0,\infty)$ such that
\begin{equation}\label{eq:inequality2}
|(-A)^{-\alpha-\epsilon}(xy)|_{L^1}\le C_{\alpha,\epsilon}|(-A)^{\alpha+\epsilon}x|_{L^2}|(-A)^{-\alpha}y|_{L^2}.
\end{equation}

\item For every $\alpha\in(0,\frac12)$, $\epsilon>0$, with $\alpha+2\epsilon<\frac12$, there exists $C_{\alpha,\epsilon}\in(0,\infty)$ such that, if $\psi:\R\to\R$ is Lipschitz continuous, 
\begin{equation}\label{eq:inequality3}
|(-A)^{\alpha+\epsilon}\psi(\cdot)|_{L^2}\le C_{\alpha,\epsilon}[\psi]_{\rm Lip}|(-A)^{\alpha+2\epsilon}\cdot|_{L^2},
\end{equation}
where $[\psi]_{\rm Lip}=\underset{z_1\neq z_2}\sup~\frac{|\psi(z_2)-\psi(z_1)|}{|z_2-z_1|}$.

\end{itemize}

These inequalities are used in an essential way to prove Lemma~\ref{lem:Psi} below. In addition, inequality~\eqref{eq:inequality1} is also used in the proof of Theorem~\ref{th:D2u}.

\subsubsection{Wiener process}

Let $\bigl(\Omega,\mathcal{F},\PP\bigr)$ denote a probability space, and consider a family $\bigl(\beta_n\bigr)_{n\in\N}$ of independent standard real-valued Wiener processes. Then set
\[
W(t)=\sum_{n\in\N}\beta_n(t)e_n.
\]
This series does not converge in $H$. However, if $\tilde{H}$ is an Hilbert space, and $L\in\mathcal{L}_2(H,\tilde{H})$ is a linear, Hilbert-Schmidt, operator, then $LW(t)$ is a Wiener process on $\tilde{H}$, centered and with covariance operator $LL^{\star}$.

\subsubsection{Nonlinearities}

For all $t\ge 0$ and all $z\in\R$, define
\begin{equation}\label{eq:PhiPsi}
\Phi_t(z)=\frac{z}{\sqrt{z^2+(1-z^2)e^{-2t}}}~,\quad \Psi_t(z)=\begin{cases} \frac{\Phi_t(z)-z}{t},~t>0,\\ z-z^3,~t=0.\end{cases}
\end{equation}
Note that for all $t\ge 0,z\in \R$, $\Phi_t(z)=z+t\Psi_t(z)$. Moreover, $\bigl(\Phi_t(\cdot)\bigr)_{t\ge 0}$ is the flow map associated with the ODE $\dot{z}=z-z^3=\Psi_0(z)$.

Lemma~\ref{lem:rappel} below states the properties of $\Phi_{\Delta t}$ and $\Psi_{\Delta t}$, and their derivatives, which are used in order to prove well-posedness and moment estimates, and to derive error estimates.

We refer to~\cite{Brehier_Goudenege:18} for a detailed proof (except for the inequality concerning the second order derivative, which is not considered there but is obtained using similar arguments).
\begin{lemma}\label{lem:rappel}
For every $\Delta t_0\in(0,1]$, there exists $C(\Delta t_0)\in(0,\infty)$ such that for all $\Delta t\in[0,\Delta t_0]$, and all $z\in \R$,
\begin{align*}
|\Phi_{\Delta t}'(z)|
&\le e^{\Delta t_0},&\Psi_{\Delta t}'(z)&\le e^{\Delta t_0},\\
|\Psi_{\Delta t}'(z)|&\le C(\Delta t_0)(1+|z|^2),&|\Psi_{\Delta t}''(z)|&\le C(\Delta t_0)(1+|z|),\\
|\Psi_{\Delta t}(z)-\Psi_{0}(z)|
&\le C(\Delta t_0)\Delta t(1+|z|^5).
\end{align*}
In particular, the mapping $\Psi_{\Delta t}$ satisfies the following one-sided Lipschitz condition: for all $z_1,z_2\in\R$,
\[
\bigl(\Psi_{\Delta t}(z_2)-\Psi_{\Delta t}(z_1)\bigr)\bigl(z_2-z_1\bigr)\le e^{\Delta t}|z_2-z_1|^2.
\]
\end{lemma}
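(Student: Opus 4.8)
The plan is to use that, for each fixed $z\in\R$, the map $t\mapsto\Phi_t(z)$ is the solution of the scalar ODE $\dot z=f(z)$ with $f(z)=z-z^3$ started at $z$; this is read off directly from~\eqref{eq:PhiPsi} (or by solving via the substitution $v=z^{-2}$). Two structural facts about $f$ then drive everything. On the one hand $f'(y)=1-3y^2\le1$ and $f''(y)=-6y$. On the other hand, since the only nonzero equilibria of $\dot z=f(z)$ are $\pm1$, the flow never increases $|z|$ beyond $\max(1,|z|)$, so $|\Phi_s(z)|\le\max(1,|z|)\le 1+|z|$ for all $s\ge0$; equivalently, $\Phi_s(z)^2=z^2/(z^2+(1-z^2)e^{-2s})\le\max(1,z^2)$. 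All the estimates below are proved for $\Delta t\in(0,\Delta t_0]$; for $\Delta t=0$ one has $\Psi_0=f$, $\Psi_0'(z)=1-3z^2$, $\Psi_0''(z)=-6z$, which obey each bound directly.

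First I would control the derivatives of $\Phi_{\Delta t}$ through the variational equations. The first variation $J(t):=\Phi_t'(z)$ solves $\dot J=f'(\Phi_t(z))J$, $J(0)=1$, hence $J(t)=e^{g(t)}$ with $g(t):=\int_0^t(1-3\Phi_s(z)^2)\,ds\le t$, so $0<J(t)\le e^t$ and thus $|\Phi_{\Delta t}'(z)|\le e^{\Delta t_0}$. Since $\Phi_t(z)=z+t\Psi_t(z)$, one has $\Psi_t'(z)=(J(t)-1)/t=(e^{g(t)}-1)/t\le(e^{\Delta t}-1)/\Delta t\le e^{\Delta t_0}$ (the inequality being trivial when $g(t)<0$), while $|J(t)-1|\le\int_0^t|f'(\Phi_s(z))|J(s)\,ds$ together with $|f'(\Phi_s(z))|\le 1+3(1+|z|)^2$ and $0<J(s)\le e^{\Delta t_0}$ gives $|\Psi_{\Delta t}'(z)|\le C(\Delta t_0)(1+|z|^2)$. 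For the second derivative, $K(t):=\Phi_t''(z)$ solves $\dot K=f''(\Phi_t(z))J(t)^2+f'(\Phi_t(z))K$, $K(0)=0$, so $K(t)=\int_0^t e^{\int_s^t f'(\Phi_r(z))\,dr}\bigl(-6\Phi_s(z)\bigr)J(s)^2\,ds$; bounding the integrating factor by $e^{t-s}\le e^{\Delta t_0}$ and using $|\Phi_s(z)|\le1+|z|$ and $J(s)^2\le e^{2\Delta t_0}$ yields $|\Phi_{\Delta t}''(z)|\le C(\Delta t_0)\Delta t\,(1+|z|)$, and since $\Psi_t''(z)=\Phi_t''(z)/t$ this gives $|\Psi_{\Delta t}''(z)|\le C(\Delta t_0)(1+|z|)$.

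For the consistency estimate I would write $\Phi_t(z)-z=\int_0^t f(\Phi_s(z))\,ds$, so $\Psi_t(z)=\tfrac1t\int_0^t f(\Phi_s(z))\,ds$, and since $\Psi_0(z)=f(z)$,
\[
\Psi_{\Delta t}(z)-\Psi_0(z)=\frac1{\Delta t}\int_0^{\Delta t}\bigl(f(\Phi_s(z))-f(z)\bigr)\,ds,\qquad f(\Phi_s(z))-f(z)=\int_0^s f'(\Phi_r(z))f(\Phi_r(z))\,dr .
\]
From $|\Phi_r(z)|\le1+|z|$ one gets $|f'(\Phi_r(z))|\le C(1+|z|^2)$ and $|f(\Phi_r(z))|\le C(1+|z|^3)$, hence $|f(\Phi_s(z))-f(z)|\le Cs(1+|z|^5)$, and finally $|\Psi_{\Delta t}(z)-\Psi_0(z)|\le C(\Delta t_0)\Delta t\,(1+|z|^5)$. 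The one-sided Lipschitz bound I would deduce from the analogous property of the flow: since $f(y_2)-f(y_1)=(y_2-y_1)\bigl(1-(y_1^2+y_1y_2+y_2^2)\bigr)$ with $y_1^2+y_1y_2+y_2^2\ge0$, one has $(f(y_2)-f(y_1))(y_2-y_1)\le|y_2-y_1|^2$, so $d(t):=\Phi_t(z_2)-\Phi_t(z_1)$ satisfies $\tfrac{d}{dt}|d(t)|^2\le2|d(t)|^2$ and hence $|\Phi_t(z_2)-\Phi_t(z_1)|\le e^t|z_2-z_1|$; then $t\bigl(\Psi_t(z_2)-\Psi_t(z_1)\bigr)(z_2-z_1)=\bigl(\Phi_t(z_2)-\Phi_t(z_1)\bigr)(z_2-z_1)-|z_2-z_1|^2\le(e^t-1)|z_2-z_1|^2$ by Cauchy--Schwarz, whence $\bigl(\Psi_{\Delta t}(z_2)-\Psi_{\Delta t}(z_1)\bigr)(z_2-z_1)\le\frac{e^{\Delta t}-1}{\Delta t}|z_2-z_1|^2\le e^{\Delta t}|z_2-z_1|^2$.

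Most of this is bookkeeping of constants; the one genuine point is that it is the \emph{sign} information $f'\le1$, not just the polynomial bound $|f'|\le C(1+|z|^2)$, that upgrades the first-order estimates to the sharp exponential constants $e^{\Delta t_0}$ and $e^{\Delta t}$, and one must check that all the estimates pass to the limit $\Delta t\to0$ so as to hold uniformly on $[0,\Delta t_0]$.
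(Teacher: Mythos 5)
Your proof is correct and complete. Note that the paper does not actually contain a proof of this lemma: it defers to \cite{Brehier_Goudenege:18} and only remarks that the second-derivative bound follows by ``similar arguments,'' so there is no in-text argument to compare against. Your route --- reading everything off the variational equations $\dot J=f'(\Phi_t(z))J$ and $\dot K=f''(\Phi_t(z))J^2+f'(\Phi_t(z))K$ together with the two structural facts $f'\le 1$ and $|\Phi_s(z)|^2\le\max(1,z^2)$ (both of which you verify correctly from the explicit formula) --- is a clean, self-contained alternative to differentiating the closed-form expression for $\Phi_t$ directly, and it makes transparent exactly where the sharp constants $e^{\Delta t_0}$ and $e^{\Delta t}$ come from, namely the one-sided bound $f'\le1$ rather than the polynomial bound on $|f'|$. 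All the individual steps check out: the identities $\Psi_t'(z)=(\Phi_t'(z)-1)/t$ and $\Psi_t''(z)=\Phi_t''(z)/t$, the Duhamel representation of $K$, the consistency estimate via $f(\Phi_s(z))-f(z)=\int_0^sf'(\Phi_r(z))f(\Phi_r(z))\,dr$, and the derivation of the one-sided Lipschitz bound for $\Psi_{\Delta t}$ from the corresponding contraction-type estimate $|\Phi_t(z_2)-\Phi_t(z_1)|\le e^t|z_2-z_1|$. The limiting case $\Delta t=0$ is handled explicitly, so the bounds are indeed uniform over $[0,\Delta t_0]$ as the statement requires.
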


Observe also that for $\Delta t>0$, the mapping $\Psi_{\Delta t}$ is of class $\mathcal{C}^\infty$, and admits bounded first and second order derivatives. However, such bounds are not uniform with respect to $\Delta t>0$.

We conclude this section with an auxiliary result, see~\cite{Wang:18} for a similar statement.
\begin{lemma}\label{lem:Psi}
For every $\Delta t_0\in(0,1]$, $\eta\in(\frac{1}{4},1)$, $\alpha\in(0,\frac{1}{2})$, and $\epsilon>0$ such that $\alpha+2\epsilon<\frac{1}{2}$, there exists $C(\Delta t_0,\eta,\alpha,\epsilon)\in(0,\infty)$ such that for all $\Delta t\in[0,\Delta t_0]$ for all $x,y\in H$, with $|(-A)^{\frac{\alpha}{2}+\epsilon}x|_H<\infty$,
\[
|(-A)^{-\eta-\frac{\alpha+\epsilon}{2}}\bigl(\Psi_{\Delta t}'(x)y\bigr)|_H\le C(\Delta t_0,\eta,\alpha,\epsilon)(1+|x|_E)|(-A)^{\frac{\alpha}{2}+\epsilon}x|_H|(-A)^{-\frac{\alpha}{2}}y|_H.
\]
\end{lemma}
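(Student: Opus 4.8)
The plan is to estimate $|(-A)^{-\eta-\frac{\alpha+\epsilon}{2}}(\Psi_{\Delta t}'(x)y)|_H$ by decomposing $\Psi_{\Delta t}'(x)$ into a leading polynomial part plus a genuinely Lipschitz part, so that inequalities~\eqref{eq:inequality1}, \eqref{eq:inequality2} and~\eqref{eq:inequality3} can be applied. First I would recall from Lemma~\ref{lem:rappel} that $|\Psi_{\Delta t}'(z)|\le C(\Delta t_0)(1+|z|^2)$ and $|\Psi_{\Delta t}''(z)|\le C(\Delta t_0)(1+|z|)$, so on bounded sets $\Psi_{\Delta t}'$ is Lipschitz with a constant controlled by $C(\Delta t_0)(1+|x|_E)$; more precisely, the function $z\mapsto \Psi_{\Delta t}'(z)$ has $[\Psi_{\Delta t}'(\cdot)]_{\rm Lip}\le C(\Delta t_0)(1+\|z\|)$ on the ball of radius $\|z\|$, which is exactly the kind of bound that feeds~\eqref{eq:inequality3}.

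The core computation: write $\eta+\frac{\alpha+\epsilon}{2} = \tilde\eta + (\alpha+\epsilon)$ with $\tilde\eta = \eta - \frac{\alpha+\epsilon}{2} > \frac14$ (shrinking $\epsilon$ if necessary so this stays above $\frac14$, using $\eta>\frac14$). Then by~\eqref{eq:inequality1},
\[
|(-A)^{-\tilde\eta-(\alpha+\epsilon)}(\Psi_{\Delta t}'(x)y)|_H \le C_{\tilde\eta}\,|(-A)^{-(\alpha+\epsilon)}(\Psi_{\Delta t}'(x)y)|_{L^1}.
\]
Now apply the product inequality~\eqref{eq:inequality2} with the pair $(\Psi_{\Delta t}'(x), y)$:
\[
|(-A)^{-\alpha-\epsilon}(\Psi_{\Delta t}'(x)\, y)|_{L^1}\le C_{\alpha,\epsilon}\,|(-A)^{\alpha+\epsilon}\Psi_{\Delta t}'(x)|_{L^2}\,|(-A)^{-\alpha}y|_{L^2}.
\]
Finally, since $\Psi_{\Delta t}'$ is Lipschitz with constant $C(\Delta t_0)(1+|x|_E)$ on the relevant range (note $\Psi_{\Delta t}'(0)$ is bounded, so the additive constant is absorbed into $1+|x|_E$ after also using $|(-A)^{\alpha+2\epsilon}x|_{L^2}$ to control $|x|_E$ via~\eqref{eq:inequality1}-type embedding — or simply carry the constant term separately), inequality~\eqref{eq:inequality3} gives
\[
|(-A)^{\alpha+\epsilon}\Psi_{\Delta t}'(x)|_{L^2}\le C_{\alpha,\epsilon}\,[\Psi_{\Delta t}'(\cdot)]_{\rm Lip}\,|(-A)^{\alpha+2\epsilon}x|_{L^2}\le C(\Delta t_0,\alpha,\epsilon)(1+|x|_E)|(-A)^{\alpha+2\epsilon}x|_{L^2}.
\]
Chaining these three estimates yields the claimed bound, after matching the exponents: one needs $\frac{\alpha}{2}+\epsilon$ on $x$ on the right-hand side of the statement, so I would set up the bookkeeping with a relabeled small parameter, replacing $\alpha$ by $\alpha/2$ and $\epsilon$ by $\epsilon/2$ (or similar) in the application of~\eqref{eq:inequality2} and~\eqref{eq:inequality3}, checking that all the constraints $\alpha+\epsilon<\frac12$, $\alpha+2\epsilon<\frac12$, $\tilde\eta>\frac14$ remain satisfiable given the hypotheses $\eta\in(\frac14,1)$, $\alpha\in(0,\frac12)$, $\alpha+2\epsilon<\frac12$.

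The main obstacle I anticipate is the careful matching of the fractional exponents: the statement has $\frac{\alpha}{2}+\epsilon$ on $x$ and $\frac{\alpha}{2}$ on $y$ and $\eta+\frac{\alpha+\epsilon}{2}$ on the outside, whereas~\eqref{eq:inequality2}–\eqref{eq:inequality3} are naturally phrased with $\alpha+\epsilon$, $\alpha$, $\alpha+2\epsilon$. Reconciling these requires choosing the auxiliary parameters in~\eqref{eq:inequality1}–\eqref{eq:inequality3} as functions of the given $\alpha,\epsilon,\eta$ (essentially running the argument at "half scale" for the nonlinearity's argument), and verifying the chain of inequalities between the Sobolev-type indices is consistent — in particular that the leftover exponent after~\eqref{eq:inequality2} and~\eqref{eq:inequality1} is precisely $\eta+\frac{\alpha+\epsilon}{2}$ and that $\eta - \frac{\alpha+\epsilon}{2}$ can be kept strictly above $\frac14$. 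The role of the factor $(1+|x|_E)$ is to serve as the Lipschitz constant of $\Psi_{\Delta t}'$ restricted to the range of $x$, and I would make this rigorous either by truncating $\Psi_{\Delta t}'$ outside $[-|x|_E,|x|_E]$ (redefining it linearly, which does not change $\Psi_{\Delta t}'(x(\xi))$ for any $\xi$) or by the mean value theorem applied pointwise together with $|\Psi_{\Delta t}''|\le C(1+|\cdot|)$; the uniformity in $\Delta t\in[0,\Delta t_0]$ is then immediate from the corresponding uniformity in Lemma~\ref{lem:rappel}.
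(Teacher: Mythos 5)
Your overall strategy is exactly the paper's: chain \eqref{eq:inequality1}, \eqref{eq:inequality2} and \eqref{eq:inequality3}, and supply the missing global Lipschitz bound for $\Psi_{\Delta t}'$ via the uniform linear growth of $\Psi_{\Delta t}''$ from Lemma~\ref{lem:rappel} together with a truncation outside the range of $x$, which is what produces the factor $(1+|x|_E)$. That part, and the uniformity in $\Delta t$, are handled correctly.

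The one step that would fail as written is the opening decomposition $\eta+\frac{\alpha+\epsilon}{2}=\tilde\eta+(\alpha+\epsilon)$ with $\tilde\eta=\eta-\frac{\alpha+\epsilon}{2}$: you claim $\tilde\eta>\frac14$ can be arranged by shrinking $\epsilon$, but $\tilde\eta=\eta-\frac{\alpha}{2}-\frac{\epsilon}{2}$, and for $\eta$ close to $\frac14$ and $\alpha$ close to $\frac12$ this sits below $\frac14$ for every $\epsilon>0$; shrinking $\epsilon$ cannot help because the obstruction is the $\frac{\alpha}{2}$ term. In addition, the full-scale application of \eqref{eq:inequality2} produces $|(-A)^{-\alpha}y|_{L^2}$ rather than the required $|(-A)^{-\alpha/2}y|_{L^2}$. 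Both problems disappear simultaneously once you commit to the half-scale bookkeeping you sketch at the end --- but note that doing so also removes the condition you still list as needing verification. The correct chain is: apply \eqref{eq:inequality1} at level $\eta$ itself to the function $(-A)^{-\frac{\alpha+\epsilon}{2}}\bigl(\Psi_{\Delta t}'(x)y\bigr)$, so that
\[
|(-A)^{-\eta-\frac{\alpha+\epsilon}{2}}\bigl(\Psi_{\Delta t}'(x)y\bigr)|_{L^2}\le C_\eta\,|(-A)^{-\frac{\alpha+\epsilon}{2}}\bigl(\Psi_{\Delta t}'(x)y\bigr)|_{L^1},
\]
then apply \eqref{eq:inequality2} and \eqref{eq:inequality3} with parameters $(\tfrac{\alpha}{2},\tfrac{\epsilon}{2})$; the constraints $\frac{\alpha}{2}+\frac{\epsilon}{2}<\frac12$ and $\frac{\alpha}{2}+\epsilon<\frac12$ follow from $\alpha+2\epsilon<\frac12$, and no condition of the form $\eta-\frac{\alpha+\epsilon}{2}>\frac14$ is ever needed. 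With that correction your argument coincides with the paper's proof.
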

\begin{proof}
Using successively the inequalities~\eqref{eq:inequality1} and~\eqref{eq:inequality2},
\begin{align*}
|(-A)^{-\eta-\frac{\alpha+\epsilon}{2}}\bigl(\Psi_{\Delta t}'(x)y\bigr)|_{L^2}&\le C_\eta |(-A)^{-\frac{\alpha+\epsilon}{2}}\bigl(\Psi_{\Delta t}'(x)y\bigr)|_{L^1}\\
&\le C_{\eta,\alpha,\epsilon}|(-A)^{\frac{\alpha+\epsilon}{2}}\Psi_{\Delta t}'(x)|_{L^2}|(-A)^{-\frac{\alpha}{2}}y|_{L^2}.
\end{align*}
The function $\Psi_{\Delta t}'$ is not globally Lipschitz continuous when $\Delta t=0$, and $\underset{\Delta t\to 0}\lim~[\Psi_{\Delta t}']_{\rm Lip}=\infty$. Thus inequality~\eqref{eq:inequality3} cannot be used directly.

However, the derivative $\Psi_{\Delta t}''$ has at most linear growth, uniformly in $\Delta t\in[0,\Delta t_0]$: $|\Psi_{\Delta t}''(z)|\le C(\Delta t_0)(1+|z|)$ for all $z\in\R$. By a straightforward truncation argument, applying inequality~\eqref{eq:inequality3} yields
\[
|(-A)^{\frac{\alpha+\epsilon}{2}}\Psi_{\Delta t}'(x)|_{L^2}\le C_{\alpha,\epsilon}C(\Delta t_0)(1+|x|_E)|(-A)^{\frac{\alpha+2\epsilon}{2}}x|_{L^2}.
\]
This concludes the proof of Lemma~\ref{lem:Psi}.
\end{proof}

\subsection{Stochastic Partial Differential Equations}

The stochastic Allen-Cahn equation with additive space-time white noise perturbation, is
\begin{equation}\label{eq:SPDE_0}
dX(t)=AX(t)dt+\bigl(X(t)-X(t)^3\bigr)dt+dW(t)~,X(0)=x.
\end{equation}
More generally, for $\Delta t\in[0,1]$ introduce the auxiliary equation
\begin{equation}\label{eq:SPDE_Deltat}
dX^{(\Delta t)}=AX^{(\Delta t)}(t)dt+\Psi_{\Delta t}(X^{(\Delta t)}(t))dt+dW(t)~,X^{(\Delta t)}=x.
\end{equation}

If $\Delta t>0$, since $\Psi_{\Delta t}$ is globally Lipschitz continuous, standard fixed point arguments (see for instance~\cite{DaPrato_Zabczyk:14}) imply that, for any initial condition $x\in H$, the SPDE~\eqref{eq:SPDE_Deltat} admits a unique global mild solution $\bigl(X^{(\Delta t)}(t,x)\bigr)_{t\ge 0}$, {\it i.e.} which satisfy
\[
X^{(\Delta t)}(t,x)=e^{tA}x+\int_{0}^{t}e^{(t-s)A}\Psi_{\Delta t}(X^{(\Delta t)}(s,x))ds+\int_{0}^{t}e^{(t-s)A}dW(s),~t\ge 0.
\]

If $\Delta t=0$, proving global well-posedness requires more refined arguments, in particular the use of the one-sided Lipschitz condition (see for instance~\cite{Cerrai:01}). For any initial condition $x\in E$, there exists a unique mild solution $\bigl(X(t,x)\bigr)_{t\ge 0}$ of Equation~\eqref{eq:SPDE_0}, and $X^{(0)}=X$ solves Equation~\eqref{eq:SPDE_Deltat} with $\Delta t=0$:
\[
X^{(0)}(t,x)=e^{tA}x+\int_{0}^{t}e^{(t-s)A}\Psi_{0}(X^{(0)}(s,x))ds+\int_{0}^{t}e^{(t-s)A}dW(s),~t\ge 0.
\]
To simplify notation, we often write $X(t)$ and $X^{(\Delta t)}(t)$ and omit the initial condition $x$.

Let 
\begin{equation}\label{eq:W^A}
W^A(t)=\int_{0}^{t}e^{(t-s)A}dW(s).
\end{equation}
Then (see~\cite[Lemma~$6.1.2$]{Cerrai:01}) for every $T\in(0,\infty)$ and $M\in\N$, there exists $C(T,M)\in(0,\infty)$ such that
\begin{equation}\label{eq:borne_W^A}
\E[\underset{t\in[0,T]}\sup~|W^A(t)|_E^{M}]\le C(T,M).
\end{equation}
Combined the one-sided Lipschitz condition for $\Psi_{\Delta t}$, see Lemma~\ref{lem:rappel},~\eqref{eq:borne_W^A} yields moment estimates for $X^{(\Delta t)}$.
\begin{lemma}\label{lem:moment-Xdeltat}
Let $T\in(0,\infty)$, $\Delta t_0\in(0,1]$ and $M\in\N$. There exists $C(T,\Delta t_0,M)\in(0,\infty)$ such that, for all $\Delta t\in[0,\Delta t_0]$ and $x\in E$,
\[
\E[\underset{t\in[0,T]}\sup~|X^{(\Delta t)}(t,x)|_E^{M}]\le C(T,\Delta t_0,M)(1+|x|_E^M).
\]
\end{lemma}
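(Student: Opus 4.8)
The plan is to follow the by-now classical strategy for moment bounds of reaction--diffusion SPDEs with a dissipative nonlinearity (as in~\cite{Cerrai:01}), the only extra point being to check that every constant produced is uniform in $\Delta t\in[0,\Delta t_0]$. This uniformity is precisely what Lemma~\ref{lem:rappel} provides: the one-sided Lipschitz constant is $e^{\Delta t}\le e^{\Delta t_0}$, and, combining $|\Psi_{\Delta t}(z)-\Psi_0(z)|\le C(\Delta t_0)\Delta t(1+|z|^5)$ with $|\Psi_0(z)|=|z-z^3|\le 2(1+|z|^5)$, we get a growth bound
\[
|\Psi_{\Delta t}(z)|\le C(\Delta t_0)(1+|z|^5),\qquad z\in\R,\ \Delta t\in[0,\Delta t_0],
\]
with a constant independent of $\Delta t$. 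Throughout I use that $e^{tA}$ is a sub-Markovian (hence contraction) semigroup on each $L^p$ and on $E$, and the heat-kernel smoothing bound $|e^{tA}g|_E\le Ct^{-1/2}|g|_{L^1}$.

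First I would remove the noise. Setting $Y^{(\Delta t)}(t)=X^{(\Delta t)}(t,x)-W^A(t)$, where $W^A$ is defined in~\eqref{eq:W^A}, the process $Y^{(\Delta t)}$ is, pathwise, the mild solution of the random PDE $\dot Y=AY+\Psi_{\Delta t}(Y+W^A)$, $Y(0)=x$, i.e.
\[
Y^{(\Delta t)}(t)=e^{tA}x+\int_0^t e^{(t-s)A}\Psi_{\Delta t}\bigl(Y^{(\Delta t)}(s)+W^A(s)\bigr)ds .
\]
Since $|X^{(\Delta t)}(t,x)|_E\le |Y^{(\Delta t)}(t)|_E+|W^A(t)|_E$ and $W^A$ already satisfies the bound~\eqref{eq:borne_W^A}, it is enough to prove that $\E[\sup_{t\in[0,T]}|Y^{(\Delta t)}(t)|_E^M]\le C(T,\Delta t_0,M)(1+|x|_E^M)$.

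Second, I would obtain pathwise $L^{2p}$ a priori bounds for $Y^{(\Delta t)}$ for every $p\ge 1$. Testing the equation against $Y|Y|^{2p-2}$ — made rigorous by first working with a Yosida approximation of $A$ and then passing to the limit, exactly as in~\cite{Cerrai:01} — gives
\[
\tfrac{1}{2p}\tfrac{d}{dt}|Y|_{L^{2p}}^{2p}=\langle AY,Y|Y|^{2p-2}\rangle+\langle \Psi_{\Delta t}(Y+W^A),Y|Y|^{2p-2}\rangle .
\]
The first term is $\le 0$ by integration by parts and the Dirichlet boundary conditions. For the second, I split $\Psi_{\Delta t}(Y+W^A)=\bigl(\Psi_{\Delta t}(Y+W^A)-\Psi_{\Delta t}(W^A)\bigr)+\Psi_{\Delta t}(W^A)$: the pointwise one-sided Lipschitz bound of Lemma~\ref{lem:rappel} yields $\bigl(\Psi_{\Delta t}(Y(\xi)+W^A(\xi))-\Psi_{\Delta t}(W^A(\xi))\bigr)Y(\xi)|Y(\xi)|^{2p-2}\le e^{\Delta t_0}|Y(\xi)|^{2p}$, while Hölder's and Young's inequalities control the contribution of $\Psi_{\Delta t}(W^A)$ by $\tfrac{2p-1}{2p}|Y|_{L^{2p}}^{2p}+\tfrac{1}{2p}|\Psi_{\Delta t}(W^A)|_{L^{2p}}^{2p}$, with $|\Psi_{\Delta t}(W^A(t))|_{L^{2p}}^{2p}\le|\Psi_{\Delta t}(W^A(t))|_E^{2p}\le C(\Delta t_0,p)(1+|W^A(t)|_E^{10p})$ by the growth bound. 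Grönwall's lemma then gives $\sup_{t\in[0,T]}|Y^{(\Delta t)}(t)|_{L^{2p}}\le C(T,p,\Delta t_0)\bigl(1+|x|_E+(\int_0^T|W^A(s)|_E^{10p}ds)^{1/(2p)}\bigr)$, and taking expectations and invoking~\eqref{eq:borne_W^A} yields $\E[\sup_{t\in[0,T]}|Y^{(\Delta t)}(t)|_{L^{2p}}^{M}]\le C(T,p,\Delta t_0,M)(1+|x|_E^M)$ for all $p\ge1$, $M\in\N$.

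Third, I would upgrade from $L^{2p}$ to $E$ using the mild formulation: since $|e^{tA}x|_E\le|x|_E$, $|e^{(t-s)A}g|_E\le C(t-s)^{-1/2}|g|_{L^1}$, and $|\Psi_{\Delta t}(Y+W^A)|_{L^1}\le C(\Delta t_0)(1+|Y|_{L^6}^5+|W^A|_E^5)$ by the growth bound, the integrability of $s\mapsto(t-s)^{-1/2}$ gives
\[
\sup_{t\in[0,T]}|Y^{(\Delta t)}(t)|_E\le |x|_E+C(T,\Delta t_0)\Bigl(1+\sup_{t\in[0,T]}|Y^{(\Delta t)}(t)|_{L^6}^5+\sup_{t\in[0,T]}|W^A(t)|_E^5\Bigr).
\]
Raising to the power $M$, taking expectations, and using the $L^{6}$ bound (the case $p=3$ above) with exponent $5M$ together with~\eqref{eq:borne_W^A} yields $\E[\sup_{t\in[0,T]}|Y^{(\Delta t)}(t)|_E^M]\le C(T,\Delta t_0,M)(1+|x|_E^M)$, and hence the claim. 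The only genuinely delicate points are the justification of the $L^{2p}$ differential inequality, which must be carried out on a regularized equation and then passed to the limit, and the book-keeping of the $\Delta t$-dependence of all constants — both handled by appealing to Lemma~\ref{lem:rappel} and the arguments of~\cite{Cerrai:01}; the rest is routine.
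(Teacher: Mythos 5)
Your overall strategy is exactly the intended one (the paper itself only cites~\cite{Brehier_Goudenege:18}, whose argument is the Cerrai-type scheme you describe: subtract $W^A$, use the one-sided Lipschitz property of $\Psi_{\Delta t}$ from Lemma~\ref{lem:rappel} for a dissipative energy estimate, and control the noise via~\eqref{eq:borne_W^A}). Steps one and two are fine, and your book-keeping of the $\Delta t$-uniformity is correct. However, your third step does not prove the stated inequality: bootstrapping from $L^{6}$ to $E$ through the mild formulation forces you to pass through $\sup_t|Y^{(\Delta t)}(t)|_{L^6}^5$, so after raising to the power $M$ and taking expectations you obtain $\E[\sup_t|Y^{(\Delta t)}(t)|_E^M]\le C(1+|x|_E^{5M})$, not $C(1+|x|_E^M)$. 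That is a strictly weaker statement for large $|x|_E$ (it would in fact suffice for every application in this paper, since all constants there are allowed to depend arbitrarily on $|x|_E$, but it is not the lemma as stated).

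The missing observation is that your own $L^{2p}$ estimate already gives the $E$-bound with the right power, with no bootstrap needed. After Grönwall you have
\[
|Y^{(\Delta t)}(t)|_{L^{2p}}\le e^{(e^{\Delta t_0}+1)t}\Bigl(|x|_{L^{2p}}+\bigl(\textstyle\int_0^t|\Psi_{\Delta t}(W^A(s))|_{L^{2p}}^{2p}ds\bigr)^{\frac{1}{2p}}\Bigr),
\]
and the crucial point is that the prefactor $e^{(e^{\Delta t_0}+1)t}$ is uniform in $p$ (the factor $2p$ in the differential inequality disappears upon taking the $2p$-th root), while the last term is bounded by $(1+T)\sup_{s\le T}|\Psi_{\Delta t}(W^A(s))|_E\le C(\Delta t_0)(1+T)(1+\sup_{s\le T}|W^A(s)|_E^5)$ uniformly in $p$. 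Letting $p\to\infty$ (or, equivalently, running the energy estimate directly on the sup-norm via its subdifferential, as in~\cite{Cerrai:01}) yields $\sup_{t\le T}|Y^{(\Delta t)}(t)|_E\le C(T,\Delta t_0)\bigl(|x|_E+1+\sup_{t\le T}|W^A(t)|_E^5\bigr)$, which is \emph{linear} in $|x|_E$; raising to the power $M$, taking expectations and using~\eqref{eq:borne_W^A} then gives the claimed bound $C(T,\Delta t_0,M)(1+|x|_E^M)$. With this modification your proof is complete; the mild-formulation smoothing step can be dropped entirely.
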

We refer to~\cite{Brehier_Goudenege:18} for a proof.

\subsection{Kolmogorov equations}

In this section, we introduce functions $u^{(\Delta t)}$ which play a key role in the weak error analysis, and which are solutions of infinite dimensional Kolmogorov equations associated with~\eqref{eq:SPDE_Deltat}.

Let $\varphi:H\to \R$ be a function of class $\mathcal{C}^2$, with bounded first and second order derivatives. Let $\Delta t\in(0,1]$. Note that we do not consider the case $\Delta t=0$ in this section, the reason why will be clear below.

For every $t\ge 0$, set
\begin{equation}\label{eq:u}
u^{(\Delta t)}(t,x)=\E\bigl[\varphi(X^{(\Delta t)}(t,x))\bigr].
\end{equation}
Formally, $u^{(\Delta t)}$ is solution of the Kolmogorov equation associated with~\eqref{eq:SPDE_Deltat}:
\begin{equation}\label{eq:Kolmo_Deltat}
\frac{\partial u^{(\Delta t)}(t,x)}{\partial t}=\mathcal{L}^{(\Delta t)}u^{(\Delta t)}(t,x)=\langle Ax+\Psi_{\Delta t}(x),Du^{(\Delta t)}(t,x)\rangle+\frac{1}{2}\sum_{j\in\N}D^2u^{(\Delta t)}(t,x).(e_j,e_j),
\end{equation}
where the first order spatial derivative is identified as an element of $H$ thanks to Riesz Theorem.

A rigorous meaning can be given using an appropriate regularization procedure. Since $\Psi_{\Delta t}$ is globally Lipschitz continuous for fixed $\Delta t>0$, this may be performed by a standard spectral Galerkin approximation. However, this choice would not allow us to pass to the limit $\Delta t\to 0$ in estimates and keep bounds uniform in the regularization parameter. Instead, we propose to replace the noise $dW(t)$ in~\eqref{eq:SPDE_Deltat} by $e^{\delta A}dW(t)$, with the regularization parameter $\delta>0$. Rigorous computations are performed with fixed $\delta>0$. Section~\ref{sec:kolmo} deals with regularity properties for $Du^{(\Delta t)}(t,x)$ and $D^2u^{(\Delta t)}(t,x)$, which allow us to obtain bounds which are uniform with respect to $\delta$, hence passing to the limit $\delta\to 0$ is allowed. 

To simplify notation, we do not mention the regularization parameter $\delta$ in the computations and statements below.

\subsection{Splitting schemes}

We are now in position to rigorously define the numerical schemes which are studied in this article.

A first scheme is defined as:
\begin{equation}\label{eq:scheme_expo}
X_{n+1}=e^{\Delta t A}\Phi_{\Delta t}(X_n)+\int_{n\Delta t}^{(n+1)\Delta t}e^{(n\Delta t-t)A}dW(t).
\end{equation}

A second scheme is defined as:
\begin{equation}\label{eq:scheme_implicit}
X_{n+1}=S_{\Delta t}\Phi_{\Delta t}(X_n)+S_{\Delta t}\bigl(W((n+1)\Delta t)-W(n\Delta t)\bigr),
\end{equation}
where $S_{\Delta t}=(I-\Delta tA)^{-1}$.

The schemes are constructed following a Lie-Trotter splitting strategy for the SPDE~\eqref{eq:SPDE_0}. Firstly, the equation $dX(t)=\Psi_0(X(t))dt$ is solved explictly using the flow map at time $t=\Delta t$, namely $\Phi_{\Delta t}$. Secondly, the equation $dX(t)=AX(t)dt+dW(t)$ is either solved exactly in scheme~\eqref{eq:scheme_expo}, or using a linear implicit Euler scheme in~\eqref{eq:scheme_implicit}.

As already emphasized in~\cite{Brehier_Cui_Hong:18,Brehier_Goudenege:18}, observe that~\eqref{eq:scheme_expo} and~\eqref{eq:scheme_implicit} can be interpreted as integrators for the auxiliary equation~\eqref{eq:SPDE_Deltat} with nonlinear coefficient $\Psi_{\Delta t}$: respectively
\begin{gather*}
X_{n+1}=e^{\Delta t A}X_n+\Delta te^{\Delta tA}\Psi_{\Delta t}(X_n)+\int_{n\Delta t}^{(n+1)\Delta t}e^{(n\Delta t-t)A}dW(t),\\
X_{n+1}=S_{\Delta t}X_n+\Delta tS_{\Delta t}\Psi_{\Delta t}(X_n)+S_{\Delta t}\bigl(W((n+1)\Delta t)-W(n\Delta t)\bigr),
\end{gather*}
where in~\eqref{eq:scheme_expo}, an exponential Euler integrator is used, whereas in~\eqref{eq:scheme_implicit} a semi-implicit integrator is used.

Moment estimates are available. We refer to~\cite{Brehier_Goudenege:18} for a proof.
\begin{lemma}\label{lem:moment-Xn}
Let $T\in(0,\infty)$, $\Delta t_0\in(0,1]$ and $M\in\N$. There exists $C(T,\Delta t_0,M)\in(0,\infty)$ such that, for all $\Delta t\in[0,\Delta t_0]$ and $x\in E$,
\[
\E\bigl[\underset{n\in\N,n\Delta t\le T}\sup~|X_n|_E^{M}\bigr]\le C(T,\Delta t_0,M)(1+|x|_E^M).
\]
\end{lemma}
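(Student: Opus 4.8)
The plan is to split off the Gaussian part of the scheme and reduce to a pathwise estimate for a dissipative recursion, exactly as for the SPDE in Lemma~\ref{lem:moment-Xdeltat}. Write $X_n=Y_n+\mathcal{W}_n$, where $\mathcal{W}_n$ is the discrete stochastic convolution associated with the linear part of the scheme: for~\eqref{eq:scheme_expo}, a telescoping of the semigroup identity gives $\mathcal{W}_n=W^A(n\Delta t)$ exactly, while for~\eqref{eq:scheme_implicit} one has $\mathcal{W}_n=\sum_{k=0}^{n-1}S_{\Delta t}^{\,n-k}\bigl(W((k+1)\Delta t)-W(k\Delta t)\bigr)$. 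In both cases $\mathcal{W}_{n+1}$ satisfies the one-step recursion with the same stochastic increment as the scheme; subtracting this recursion from~\eqref{eq:scheme_expo}, resp.~\eqref{eq:scheme_implicit}, shows that $Y_n$ satisfies a closed recursion which is deterministic given a realisation of $\mathcal{W}$, namely $Y_{n+1}=e^{\Delta tA}\bigl(\Phi_{\Delta t}(Y_n+\mathcal{W}_n)-\mathcal{W}_n\bigr)$ (resp. with $S_{\Delta t}$ in place of $e^{\Delta tA}$), with $Y_0=x$.

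I would then prove a pathwise bound for $\sup_{n\Delta t\le T}|Y_n|_E$ in terms of $\sup_{n\Delta t\le T}|\mathcal{W}_n|_E$. Both $e^{\Delta tA}$ and $S_{\Delta t}=(I-\Delta tA)^{-1}$ are contractions on $E$ (maximum principle for the Dirichlet Laplacian), so it suffices to estimate $|\Phi_{\Delta t}(Y_n+\mathcal{W}_n)-\mathcal{W}_n|_E$. Pointwise in $\xi\in[0,1]$ one decomposes
\[
\Phi_{\Delta t}(Y_n+\mathcal{W}_n)-\mathcal{W}_n=\bigl(\Phi_{\Delta t}(Y_n+\mathcal{W}_n)-\Phi_{\Delta t}(\mathcal{W}_n)\bigr)+\bigl(\Phi_{\Delta t}(\mathcal{W}_n)-\mathcal{W}_n\bigr).
\]
Since $\Phi_{\Delta t}=\mathrm{id}+\Delta t\Psi_{\Delta t}$ is increasing and $\Psi_{\Delta t}$ is one-sided Lipschitz with constant $e^{\Delta t}$ (Lemma~\ref{lem:rappel}), the first term is bounded in absolute value by $(1+e\Delta t)|Y_n(\xi)|$ for $\Delta t\le 1$; the second equals $\Delta t\,|\Psi_{\Delta t}(\mathcal{W}_n(\xi))|\le C(\Delta t_0)\Delta t(1+|\mathcal{W}_n(\xi)|^5)$ by the growth bounds of Lemma~\ref{lem:rappel}. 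This is precisely the point where the splitting structure is used: keeping $\Phi_{\Delta t}$ rather than expanding it avoids the $\Delta t^2$ terms of superlinear growth that would otherwise be uncontrollable. Taking the supremum over $\xi$ yields $|Y_{n+1}|_E\le(1+e\Delta t)|Y_n|_E+C(\Delta t_0)\Delta t(1+|\mathcal{W}_n|_E^5)$, and the discrete Gronwall lemma gives, for all $n$ with $n\Delta t\le T$,
\[
|Y_n|_E\le e^{eT}\Bigl(|x|_E+C(\Delta t_0)T\bigl(1+\sup_{k\Delta t\le T}|\mathcal{W}_k|_E^5\bigr)\Bigr),
\]
hence $\sup_{n\Delta t\le T}|X_n|_E\le C(T,\Delta t_0)\bigl(1+|x|_E+\sup_{n\Delta t\le T}|\mathcal{W}_n|_E^5\bigr)$.

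It remains to bound $\E\bigl[\sup_{n\Delta t\le T}|\mathcal{W}_n|_E^{M}\bigr]$ uniformly in $\Delta t\in(0,\Delta t_0]$; this is the main technical point, and raising the previous pathwise estimate to the power $M$ (via $(a+b)^M\le 2^{M-1}(a^M+b^M)$) then concludes. For the exponential scheme this is immediate from~\eqref{eq:borne_W^A}, since $\mathcal{W}_n=W^A(n\Delta t)$. For the implicit scheme, $\mathcal{W}_n=\sum_{k=0}^{n-1}S_{\Delta t}^{\,n-k}\Delta W_k$ is the implicit-Euler stochastic convolution, whose sup-norm moments are controlled by a discrete version of the factorization method: one introduces an auxiliary summation index, uses the discrete smoothing estimates $\|(-A)^{\theta}S_{\Delta t}^{\,j}\|_{\mathcal{L}(H)}\le C(j\Delta t)^{-\theta}$ (valid for $j\Delta t$ in a bounded range) together with the Hilbert--Schmidt bound $\|(-A)^{-\theta}\|_{\mathcal{L}_2(H)}<\infty$ for $\theta>\tfrac14$, applies the Burkholder--Davis--Gundy inequality, and concludes via the Sobolev embedding $W^{2\eta,2m}(0,1)\subset\mathcal{C}([0,1])$ for suitable $\eta>\tfrac14$ and $m$ large; all constants are uniform in $\Delta t$. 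The same two-step argument handles both schemes. We refer to~\cite{Brehier_Goudenege:18} for the details.
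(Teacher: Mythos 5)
The paper offers no proof of this lemma, deferring entirely to~\cite{Brehier_Goudenege:18}; your sketch is correct and reconstructs essentially the argument used there (split off the discrete stochastic convolution, derive a pathwise dissipative recursion for the remainder in $E$ from the contractivity of $e^{\Delta tA}$, resp.\ $S_{\Delta t}$, and the monotonicity/one-sided Lipschitz structure of $\Phi_{\Delta t}=\mathrm{id}+\Delta t\Psi_{\Delta t}$, then close with uniform moment bounds for the convolution). The one step you leave at the level of a citation --- the uniform-in-$\Delta t$ bound on $\E\bigl[\sup_n|\mathcal{W}_n|_E^M\bigr]$ for the implicit scheme via a discrete factorization argument --- is correctly identified as the technical crux and is precisely where the cited work does the work, so nothing essential is missing.
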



\section{Weak convergence results}\label{sec:weak}

This section is devoted to the statement of the main result of this article: the numerical schemes~\eqref{eq:scheme_expo} and~\eqref{eq:scheme_implicit} have a weak convergence rate equal to $\frac{1}{2}$, see Theorems~\ref{th:weak_expo} and~\ref{th:weak_implicit} below respectively.

The main difficulty and novelty of this contribution is the treatment of SPDEs with non globally Lipschitz continuous nonlinear coefficient. Up to our knowledge, except in the PhD Thesis~\cite{Kopec} (where split-step schemes based on an implicit discretization of the nonlinear term, for more general polynomial coefficients, are considered), there is no analysis of weak rates of convergence for that situation in the literature.

Strong convergence of numerical schemes~\eqref{eq:scheme_expo} and~\eqref{eq:scheme_implicit} is proved in~\cite{Brehier_Goudenege:18}, without rate. In~\cite{Brehier_Cui_Hong:18}, the strong rate of convergence $\frac{1}{4}$ is proved for the scheme~\eqref{eq:scheme_expo}.

Test functions satisfy the following condition.
\begin{hyp}\label{ass:varphi}
The function $\varphi:H\to \R$ is of class $\mathcal{C}^2$, and has bounded first and second order derivatives:
\begin{align*}
\|\varphi\|_{1,\infty}&=\underset{x\in H,h\in H,|h|_H=1}\sup~|D\varphi(x).h|<\infty,\\
\|\varphi\|_{2,\infty}&=\|\varphi\|_{1,\infty}+\underset{x\in H,h,k\in H,|h|_H=|k|_H=1}\sup~|D^2\varphi(x).(h,k)|<\infty.
\end{align*}
\end{hyp}

We are now in position to state our main results.
\begin{theo}\label{th:weak_expo}
Let $(X_{n})_{n\in\N}$ be defined by the scheme \eqref{eq:scheme_expo}.

Let $T\in(0,\infty)$, $\Delta t_0\in(0,1]$ and $x\in E$. For all $\alpha\in[0,\frac12)$, there exists $C_\alpha(T,\Delta t_0,|x|_E)\in(0,\infty)$ such that the following holds true.

Let $\varphi$ satisfy Assumption~\ref{ass:varphi}. For all $\Delta t\in(0,\Delta t_0]$ and $N\in\N$, such that $T=N\Delta t$,
\begin{equation}\label{eq:th_weak_expo}
\big|\E\bigl[\varphi(X(N\Delta t))\bigr]-\E\bigl[\varphi(X_{N})\bigr]\big|\le C_{\alpha}(T,\Delta t_0,|x|_E)\|\varphi\|_{1,\infty}\Delta t^\alpha,
\end{equation}
\end{theo}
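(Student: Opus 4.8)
The plan is to follow the strategy of \cite{Wang:16, Wang:18}, comparing the numerical scheme \eqref{eq:scheme_expo} not directly with $X$ but with an auxiliary continuous-time process and exploiting the fact that \eqref{eq:scheme_expo} is a genuine integrator for the modified equation \eqref{eq:SPDE_Deltat}. Concretely, I would decompose the error as
\[
\big|\E[\varphi(X(N\Delta t))]-\E[\varphi(X_N)]\big|\le \big|\E[\varphi(X(N\Delta t))]-\E[\varphi(X^{(\Delta t)}(N\Delta t))]\big|+\big|\E[\varphi(X^{(\Delta t)}(N\Delta t))]-\E[\varphi(X_N)]\big|.
\]
The first term is the \emph{modified equation error}: $X=X^{(0)}$ and $X^{(\Delta t)}$ solve \eqref{eq:SPDE_Deltat} with nonlinearities $\Psi_0$ and $\Psi_{\Delta t}$ respectively, which by Lemma~\ref{lem:rappel} satisfy $|\Psi_{\Delta t}(z)-\Psi_0(z)|\le C\Delta t(1+|z|^5)$ together with the one-sided Lipschitz condition. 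A standard Gronwall argument applied to the mild formulation, using the moment bounds of Lemma~\ref{lem:moment-Xdeltat}, gives $\E|X(t)-X^{(\Delta t)}(t)|_H\le C(T,|x|_E)\Delta t$ (in fact order $1$, better than needed), hence this term is $O(\Delta t)\le O(\Delta t^\alpha)$ after multiplying by $\|\varphi\|_{1,\infty}$.

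The second term is where the Kolmogorov equation enters. Writing $u^{(\Delta t)}(t,x)=\E[\varphi(X^{(\Delta t)}(t,x))]$ as in \eqref{eq:u}, I would telescope
\[
\E[\varphi(X^{(\Delta t)}(N\Delta t))]-\E[\varphi(X_N)]=\sum_{n=0}^{N-1}\Big(\E\big[u^{(\Delta t)}((N-n)\Delta t,X_n)\big]-\E\big[u^{(\Delta t)}((N-n-1)\Delta t,X_{n+1})\big]\Big),
\]
so each summand compares the exact evolution over one step, $u^{(\Delta t)}((N-n)\Delta t,X_n)=\E[u^{(\Delta t)}((N-n-1)\Delta t,X^{(\Delta t)}(\Delta t,X_n))\,|\,\mathcal F_{n\Delta t}]$, with the numerical step $X_{n+1}$. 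Since \eqref{eq:scheme_expo} is the exponential Euler integrator for \eqref{eq:SPDE_Deltat}, both $X^{(\Delta t)}(\Delta t,X_n)$ and $X_{n+1}$ start from $X_n$, are driven by the same Wiener increment, and share the same stochastic convolution term $\int_{n\Delta t}^{(n+1)\Delta t}e^{(n\Delta t-t)A}dW(t)$; they differ only through the treatment of the drift, $\int_0^{\Delta t}e^{(\Delta t-s)A}\Psi_{\Delta t}(X^{(\Delta t)}(s))ds$ versus $\Delta t\,e^{\Delta t A}\Psi_{\Delta t}(X_n)$, plus the initial-layer mismatch $e^{\Delta t A}X_n$ versus $e^{\Delta t A}X_n$ (which agree here). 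Applying a first-order Taylor expansion of $v:=u^{(\Delta t)}((N-n-1)\Delta t,\cdot)$ around the common stochastic-convolution-plus-semigroup part, the leading terms cancel and one is left with $Dv$ applied to the drift discrepancy, controlled using the regularity estimate for $Du^{(\Delta t)}$ from Theorem~\ref{th:Du} (of the form $|(-A)^{\beta}Dv(x)|_H\lesssim (N-n-1)^{-\beta}\Delta t^{-\beta}(1+\dots)$ for suitable $\beta<1/2$), the smoothing of $e^{tA}$, the Hölder-in-time regularity of $X^{(\Delta t)}$, and the moment bounds of Lemmas~\ref{lem:moment-Xdeltat} and~\ref{lem:moment-Xn}. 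Each summand is $O\big(((N-n-1)\Delta t)^{-\alpha'}\Delta t^{1+\dots}\big)$ with the exponents arranged so that the sum over $n$ telescopes against a Riemann-sum bound $\sum_{n}((N-n-1)\Delta t)^{-\alpha'}\Delta t\le C(T)$, leaving a net factor $\Delta t^{\alpha}$ for any $\alpha<1/2$.

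The main obstacle is to make the one-step analysis quantitative without ever paying a negative power of $\Delta t$ larger than $\alpha<1/2$, despite the polynomial nonlinearity and the fact that $\Psi_{\Delta t}$ has derivatives that blow up as $\Delta t\to 0$. This forces careful use of: (i) the uniform-in-$\Delta t$ growth bounds of Lemma~\ref{lem:rappel} ($\Psi_{\Delta t}'$ quadratic, $\Psi_{\Delta t}''$ linear) rather than Lipschitz bounds; (ii) the smoothing inequalities \eqref{eq:inequality1}--\eqref{eq:inequality3} and Lemma~\ref{lem:Psi} to absorb the nonlinearity into negative fractional powers of $-A$ at the cost only of an $|X_n|_E$-factor, which is harmless by Lemma~\ref{lem:moment-Xn}; and (iii) the precise regularity of $Du^{(\Delta t)}$ in Theorem~\ref{th:Du}, uniform in $\Delta t$ and in the regularization parameter $\delta$, so that the limit $\delta\to0$ is legitimate. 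Handling the initial step $n=0$ and the final step $n=N-1$ (where time-regularity of $u^{(\Delta t)}$ at $t=0$ is weakest) requires a separate, coarser estimate that still yields $O(\Delta t^{\alpha})$. Only first-order derivatives of $u^{(\Delta t)}$ (hence only $\|\varphi\|_{1,\infty}$) are needed because the stochastic-convolution term is treated exactly and does not generate an Itô-correction term requiring $D^2u^{(\Delta t)}$ in this scheme — this is precisely why the proof of Theorem~\ref{th:weak_expo} is shorter than that of Theorem~\ref{th:weak_implicit}.
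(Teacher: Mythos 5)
Your proposal is correct, and its overall architecture coincides with the paper's: the same splitting of the error through the modified equation \eqref{eq:SPDE_Deltat} (Proposition~\ref{propo:error_continu} gives the order-one bound you describe), the same telescoping through $u^{(\Delta t)}$, and the same key observation that the exact treatment of the stochastic convolution in \eqref{eq:scheme_expo} removes every second-order term, so that only Theorem~\ref{th:Du} and $\|\varphi\|_{1,\infty}$ are needed. Where you genuinely deviate is the one-step expansion. The paper introduces the continuous interpolant \eqref{eq:scheme_expo_tilde}, applies It\^o's formula to $u^{(\Delta t)}(T-t,\tilde{X}(t))$ and cancels the trace term against the Kolmogorov equation \eqref{eq:Kolmo_Deltat}, which leaves the integrand $\langle Du^{(\Delta t)}(T-t,\tilde{X}(t)),\Psi_{\Delta t}(\tilde{X}(t))-e^{(t-k\Delta t)A}\Psi_0(\Phi_{t-k\Delta t}(X_k))\rangle$, then split into four pieces. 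You instead invoke the Markov property to compare $X^{(\Delta t)}(\Delta t,X_n)$ with $X_{n+1}$ and Taylor-expand $u^{(\Delta t)}((N-n-1)\Delta t,\cdot)$ around the shared semigroup-plus-stochastic-convolution part; this produces the same kind of drift-discrepancy integrand, evaluated at a random convex combination rather than at $\tilde{X}(t)$ (harmless, since Theorem~\ref{th:Du} holds at every point of $E$ with polynomial dependence on the $E$-norm), and it spares you the justification of the It\^o/Kolmogorov step in infinite dimensions, which the paper handles via the $\delta$-regularization of the noise. The hard estimates are unchanged either way: you still need Lemma~\ref{lem:Psi}, moment bounds on $|(-A)^{(\alpha+2\epsilon)/2}\cdot|_H$ and on $(-A)^{-\alpha/2}\bigl(X^{(\Delta t)}(s,X_n)-X_n\bigr)$ (the analogues of Lemmas~\ref{lem:borneXtilde_alpha} and~\ref{lem:borneXtilde_-alpha}, now for the exact modified flow rather than for $\tilde{X}$), and the separate coarse treatment of the endpoints: the first step, where only $x\in E$ is available and an integrable singularity $(k\Delta t)^{-2\alpha-3\epsilon}$ appears, and the last steps, where the $(T-t)$-singularity from Theorem~\ref{th:Du} appears. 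On that last point, note that the paper proves Theorem~\ref{th:Du} for all $\alpha\in[0,1)$, not merely $\alpha<\frac12$ as in your sketch, precisely so that exponents such as $\eta+\frac{\alpha+\epsilon}{2}$ with $\eta>\frac14$ remain admissible without extra care; your sum over $n$ is then controlled by a convergent Riemann sum (not a telescoping cancellation, as you phrase it), which is a cosmetic rather than substantive issue.
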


\begin{theo}\label{th:weak_implicit}
Let $(X_{n})_{n\in\N}$ be defined by the scheme \eqref{eq:scheme_implicit}.

Let $T\in(0,\infty)$, $\Delta t_0\in(0,1]$ and $x\in E$. For all $\alpha\in[0,\frac12)$, there exists $C_\alpha(T,\Delta t_0,|x|_E)\in(0,\infty)$ such that the following holds true.

Let $\varphi$ satisfy Assumption~\ref{ass:varphi}. For all $\Delta t\in(0,\Delta t_0]$ and $N\in\N$, such that $T=N\Delta t$,
\begin{equation}\label{eq:th_weak_imp}
\big|\E\bigl[\varphi(X(N\Delta t))\bigr]-\E\bigl[\varphi(X_{N})\bigr]\big|\le C_{\alpha}(T,\Delta t_0,|x|_E)\|\varphi\|_{2,\infty}\Delta t^\alpha.
\end{equation}
\end{theo}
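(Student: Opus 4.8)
\textbf{Proof plan for Theorem~\ref{th:weak_implicit}.}

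The plan is to follow the Kolmogorov equation / Malliavin calculus approach of~\cite{Debussche:11}, adapted to the splitting scheme structure. First I would fix $\Delta t\in(0,\Delta t_0]$ and $N$ with $T=N\Delta t$, and work with the auxiliary SPDE~\eqref{eq:SPDE_Deltat} with coefficient $\Psi_{\Delta t}$: the point is that both $X(T)=X^{(0)}(T)$ and the scheme~\eqref{eq:scheme_implicit} are naturally compared to $X^{(\Delta t)}$. One splits the weak error as
\[
\E[\varphi(X(T))]-\E[\varphi(X_N)]=\bigl(\E[\varphi(X^{(0)}(T))]-\E[\varphi(X^{(\Delta t)}(T))]\bigr)+\bigl(\E[\varphi(X^{(\Delta t)}(T))]-\E[\varphi(X_N)]\bigr).
\]
The first bracket is controlled using the last estimate of Lemma~\ref{lem:rappel}, namely $|\Psi_{\Delta t}(z)-\Psi_0(z)|\le C\Delta t(1+|z|^5)$, together with the moment bounds of Lemma~\ref{lem:moment-Xdeltat} and a Gronwall argument exploiting the one-sided Lipschitz condition; this gives a term of size $O(\Delta t)$, hence negligible compared to $\Delta t^\alpha$.

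For the second bracket, I would introduce $u^{(\Delta t)}(t,x)=\E[\varphi(X^{(\Delta t)}(t,x))]$ from~\eqref{eq:u}, solving the Kolmogorov equation~\eqref{eq:Kolmo_Deltat}, and write the telescoping sum
\[
\E[\varphi(X^{(\Delta t)}(T))]-\E[\varphi(X_N)]=\sum_{n=0}^{N-1}\Bigl(\E[u^{(\Delta t)}(T-n\Delta t,X_n)]-\E[u^{(\Delta t)}(T-(n+1)\Delta t,X_{n+1})]\Bigr),
\]
so that each summand measures the local error of one step of~\eqref{eq:scheme_implicit} against the Kolmogorov flow. Using the PDE~\eqref{eq:Kolmo_Deltat} and the scheme written in its perturbed form $X_{n+1}=S_{\Delta t}X_n+\Delta t S_{\Delta t}\Psi_{\Delta t}(X_n)+S_{\Delta t}(W((n+1)\Delta t)-W(n\Delta t))$, each local error decomposes into: (i) a drift term comparing $\langle Ax+\Psi_{\Delta t}(x),Du^{(\Delta t)}\rangle$ with its discrete analogue, split further into a linear part (handled by the standard estimate $\|(S_{\Delta t}-e^{\Delta t A})(-A)^{-\rho}\|_{\mathcal L(H)}\le C\Delta t^{\rho}$ type bounds and the smoothing of $(-A)^{-\rho}S_{\Delta t}$) and a nonlinear part controlled via the first-order regularity estimate of Theorem~\ref{th:Du}; (ii) a second-order trace term $\frac12\sum_j D^2u^{(\Delta t)}.(e_j,e_j)$ versus the contribution of the stochastic increment $S_{\Delta t}(W((n+1)\Delta t)-W(n\Delta t))$, where one must exploit the regularity of $D^2u^{(\Delta t)}$ from Theorem~\ref{th:D2u}; (iii) mixed It\^o remainder terms. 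The stochastic increment pieces that are linear in the noise must be removed by a Malliavin integration-by-parts (duality) formula: terms of the form $\E[Du^{(\Delta t)}(\cdot,Y_s).(\text{something involving }dW)]$ are rewritten using the Malliavin derivative of $Y_s$, and here the key input is Lemma~\ref{lem:malliavin}, which bounds the Malliavin derivative of the scheme using its splitting structure — this converts an apparent $O(\Delta t^{1/2})$-per-step (hence $O(1)$ total) contribution into the required $O(\Delta t^{1/2+\alpha'})$-per-step bound summing to $O(\Delta t^\alpha)$.

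Summing the $N=T/\Delta t$ local errors, each of order $\Delta t^{1+\alpha}$ after the above cancellations (the loss of $\Delta t^{1/2}$ relative to a naive count being recovered by the singular-in-time factors $(T-n\Delta t)^{-\beta}$ in the derivative estimates of Theorems~\ref{th:Du} and~\ref{th:D2u}, which remain integrable for $\beta<1$), yields the bound $C_\alpha(T,\Delta t_0,|x|_E)\|\varphi\|_{2,\infty}\Delta t^\alpha$; the moment bounds of Lemma~\ref{lem:moment-Xn} are used throughout to absorb the polynomial-in-$\|X_n\|_E$ factors appearing in the nonlinear estimates. The main obstacle I anticipate is the nonlinear drift term in the local error: because $\Psi_{\Delta t}$ has only polynomial growth and its Lipschitz constant blows up as $\Delta t\to0$, one cannot use a global Lipschitz bound, and one must instead route the estimate through the weighted inequalities~\eqref{eq:inequality1}--\eqref{eq:inequality3} and Lemma~\ref{lem:Psi} together with the precise statement of Theorem~\ref{th:Du} (pairing a negative fractional power of $-A$ on $Du^{(\Delta t)}$ with a positive fractional power absorbed by spatial regularity of the numerical/auxiliary solution), keeping all constants uniform in $\Delta t$; the companion difficulty is proving that Lemma~\ref{lem:malliavin} indeed gives a Malliavin derivative bound strong enough (in an appropriate negative-order norm, uniformly in $\Delta t$) for the duality argument to close.
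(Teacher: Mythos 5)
Your plan matches the paper's proof essentially step for step: the same splitting of the error through the auxiliary equation with coefficient $\Psi_{\Delta t}$, the same telescoping sum combined with the Kolmogorov equation~\eqref{eq:Kolmo_Deltat}, the Malliavin duality formula together with the splitting-based bound of Lemma~\ref{lem:malliavin} to handle the stochastic-integral contributions, and the regularity estimates of Theorems~\ref{th:Du} and~\ref{th:D2u} with integrable time singularities to recover the order $\alpha<\frac12$. The only points you leave implicit are the continuous-time interpolation $\tilde X$ of~\eqref{eq:scheme_implicit_tilde} needed to apply the It\^o formula, and the separate treatment of the first time step (Lemma~\ref{lem:term0}), which is required because smoothing bounds such as $\|(-A)^{1-\epsilon}S_{\Delta t}^{k}\|_{\mathcal{L}(H)}\le C_\epsilon (k\Delta t)^{-1+\epsilon}$ degenerate at $k=0$.
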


Theorems~\ref{th:weak_expo} and~\ref{th:weak_implicit} are natural generalizations of the results obtained for SPDEs with globally Lipschitz continuous nonlinear coefficient, see for instance~\cite{Debussche:11}. We obtain the same weak order of convergence $\frac{1}{2}$, which is twice the strong order of convergence.

\begin{rem}
The regularity of the function $\varphi$ is essential to get a weak order of convergence which is twice the strong order, as proved in~\cite{Brehier:17}. If one wants to replace $\|\varphi\|_{2,\infty}$ by $\|\varphi\|_{1,\infty}$ in the right-hand side of~\eqref{eq:th_weak_imp}, the order of convergence has to be replaced by $\frac{\alpha}{2}$, even in the absence of nonlinear coefficient.

In the right-hand side of~\eqref{eq:th_weak_expo}, it is sufficient to control only $\|\varphi\|_{1,\infty}$. This is due to an appropriate decomposition of the weak error. This is not in contradiction with~\cite{Brehier:17}: in the absence of nonlinear coefficient, the weak error is equal to zero.
\end{rem}

Proving Theorem~\ref{th:weak_expo} and~\ref{th:weak_implicit} is the aim of the remainder of the article. The strategy consists in decomposing the weak error as follows:
\begin{align*}
\big|\E\bigl[\varphi(X(N\Delta t))\bigr]-\E\bigl[\varphi(X_{N})\bigr]\big|&\le \big|\E\bigl[\varphi(X(N\Delta t))\bigr]-\E\bigl[\varphi(X^{(\Delta t)}(N\Delta t))\bigr]\big|\\
&+\big|\E\bigl[\varphi(X^{(\Delta t)}(N\Delta t))\bigr]-\E\bigl[\varphi(X_{N})\bigr]\big|.
\end{align*}

The first error term is estimated using the following result, quoted from~\cite{Brehier_Goudenege:18}, combined with globally Lipschitz continuity of $\varphi$ induced by Assumption~\ref{ass:varphi}.
\begin{propo}\label{propo:error_continu}
Let $T\in(0,\infty)$, $\Delta t_0\in(0,1)$ and $x\in E$. There exists $C(T,\Delta t_0,|x|_E)\in(0,\infty)$ such that for all $\Delta t\in (0,\Delta t_0]$,
\[
\underset{t\in[0,T]}\sup~\E\bigl[\big|X(t)-X^{(\Delta t)}(t))\big|_H\bigr]\le C(T,\Delta t_0,|x|_E)\Delta t.
\]
\end{propo}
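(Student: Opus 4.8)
The goal is to bound $\sup_{t\in[0,T]}\E[|X(t)-X^{(\Delta t)}(t)|_H]$ by $C\Delta t$, where $X$ solves the stochastic Allen-Cahn equation with nonlinearity $\Psi_0(z)=z-z^3$ and $X^{(\Delta t)}$ solves the same equation but with nonlinearity $\Psi_{\Delta t}$. Since both processes are driven by the same Wiener process $W$ and start from the same initial condition $x\in E$, the stochastic convolution $W^A(t)$ cancels. So let me write $r(t)=X(t)-X^{(\Delta t)}(t)$; this is a process with values in $H$ (indeed in $E$), with $r(0)=0$, and — at least formally, or after the regularization procedure described in the paper — it satisfies the random PDE
\[
\frac{d}{dt}r(t)=Ar(t)+\Psi_0(X(t))-\Psi_{\Delta t}(X^{(\Delta t)}(t)).
\]

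The plan is a straightforward energy estimate. I would split the nonlinear difference as
\[
\Psi_0(X(t))-\Psi_{\Delta t}(X^{(\Delta t)}(t))=\bigl(\Psi_0(X(t))-\Psi_0(X^{(\Delta t)}(t))\bigr)+\bigl(\Psi_0(X^{(\Delta t)}(t))-\Psi_{\Delta t}(X^{(\Delta t)}(t))\bigr).
\]
Then compute $\frac{1}{2}\frac{d}{dt}|r(t)|_H^2=\langle Ar(t),r(t)\rangle+\langle \Psi_0(X(t))-\Psi_0(X^{(\Delta t)}(t)),r(t)\rangle+\langle\Psi_0(X^{(\Delta t)}(t))-\Psi_{\Delta t}(X^{(\Delta t)}(t)),r(t)\rangle$. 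The first term is $\le 0$ since $A$ is dissipative. For the second term, I would use the one-sided Lipschitz property of $\Psi_0$ (the $\Delta t=0$ case of the one-sided Lipschitz bound in Lemma~\ref{lem:rappel}, giving $\langle\Psi_0(a)-\Psi_0(b),a-b\rangle\le |a-b|_H^2$ pointwise hence in $H$), so this contributes at most $|r(t)|_H^2$. For the third term, I would apply Cauchy-Schwarz and then the last estimate of Lemma~\ref{lem:rappel}, $|\Psi_{\Delta t}(z)-\Psi_0(z)|\le C(\Delta t_0)\Delta t(1+|z|^5)$, pointwise in $\xi\in[0,1]$, to get
\[
|\Psi_0(X^{(\Delta t)}(t))-\Psi_{\Delta t}(X^{(\Delta t)}(t))|_H\le C(\Delta t_0)\Delta t\bigl(1+|X^{(\Delta t)}(t)|_E^5\bigr).
\]
Thus the third term is bounded by $\frac{1}{2}|r(t)|_H^2+\frac{1}{2}C(\Delta t_0)^2\Delta t^2(1+|X^{(\Delta t)}(t)|_E^5)^2$.

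Combining, $\frac{d}{dt}|r(t)|_H^2\le 3|r(t)|_H^2+C(\Delta t_0)^2\Delta t^2(1+|X^{(\Delta t)}(t)|_E^5)^2$, and Grönwall's lemma with $r(0)=0$ gives
\[
|r(t)|_H^2\le C(\Delta t_0)^2\Delta t^2 e^{3T}\int_0^T\bigl(1+|X^{(\Delta t)}(s)|_E^5\bigr)^2\,ds.
\]
Taking expectation, then using Lemma~\ref{lem:moment-Xdeltat} (with $M=10$) to bound $\E[\sup_{s\in[0,T]}|X^{(\Delta t)}(s)|_E^{10}]\le C(T,\Delta t_0)(1+|x|_E^{10})$, yields $\sup_{t\in[0,T]}\E[|r(t)|_H^2]\le C(T,\Delta t_0,|x|_E)\Delta t^2$, and then Jensen's inequality ($\E[|r(t)|_H]\le\E[|r(t)|_H^2]^{1/2}$) gives the claimed $\Delta t$ bound.

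The main obstacle is making the energy estimate rigorous: the mild solutions are not a priori differentiable in $t$ and do not a priori live in $D(A)$, so $\langle Ar(t),r(t)\rangle$ is only formal. The clean way to handle this is to run the argument on the regularized equations (noise $e^{\delta A}dW$, or a spectral Galerkin truncation), where everything is smooth and the a priori bounds of Lemma~\ref{lem:moment-Xdeltat} hold uniformly in the regularization parameter, obtain the estimate with constants independent of the regularization, and pass to the limit — exactly the regularization philosophy already invoked in the paper. Alternatively one can work with the mild formulation directly: write $r(t)=\int_0^t e^{(t-s)A}\bigl(\Psi_0(X(s))-\Psi_{\Delta t}(X^{(\Delta t)}(s))\bigr)\,ds$, split as above, bound the first piece using $\|e^{(t-s)A}\|_{\mathcal L(H)}\le 1$ together with a Lipschitz-type control of $\Psi_0$ localized on the (random, finite) sup-norm balls where both solutions live — using Lemma~\ref{lem:moment-Xn}-type moment bounds to control the localization constant — and the second piece using the $(1+|z|^5)$ estimate, then close with Grönwall. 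In any case the one-sided Lipschitz structure is what keeps the constant from blowing up, and the order $\Delta t$ (rather than $\Delta t^\alpha$) comes directly from the $\Delta t$-factor in the $|\Psi_{\Delta t}(z)-\Psi_0(z)|$ bound; since this is quoted from~\cite{Brehier_Goudenege:18} I would simply refer there for the full regularization details.
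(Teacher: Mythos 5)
The paper does not actually prove this proposition: it is quoted verbatim from~\cite{Brehier_Goudenege:18}, so there is no in-paper proof to compare against. Your primary argument — cancel the stochastic convolution, run an energy estimate on $r=X-X^{(\Delta t)}$ using dissipativity of $A$, the one-sided Lipschitz bound $\Psi_0'\le 1$ for the difference $\Psi_0(X)-\Psi_0(X^{(\Delta t)})$, and the pointwise estimate $|\Psi_{\Delta t}(z)-\Psi_0(z)|\le C(\Delta t_0)\Delta t(1+|z|^5)$ from Lemma~\ref{lem:rappel} combined with the moment bound of Lemma~\ref{lem:moment-Xdeltat}, then Gr\"onwall and Jensen — is correct and is exactly the natural (and, in substance, the cited reference's) route; the crucial point you correctly identify is that the one-sided Lipschitz constant is deterministic, so Gr\"onwall produces a deterministic exponential factor and only polynomial moments of $|X^{(\Delta t)}|_E$ are needed. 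Your acknowledgment that the energy identity is formal and must be justified through the regularization/Galerkin procedure is also the right disclaimer. One caution: the \emph{alternative} route you sketch (mild formulation plus a locally Lipschitz bound for $\Psi_0$ with a random localization constant, closed by Gr\"onwall) does not work as stated, because the Gr\"onwall factor would then be $\exp\bigl(CT(1+\sup_s|X(s)|_E^2+\sup_s|X^{(\Delta t)}(s)|_E^2)\bigr)$, and taking expectations would require exponential moments of the sup-norm, which Lemmas~\ref{lem:moment-Xdeltat} and~\ref{lem:moment-Xn} do not provide; stick with the one-sided Lipschitz energy argument.
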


The treatment of the second error term requires more subtle arguments. First, thanks to~\eqref{eq:u}, and a telescoping sum argument,
\begin{align*}
\E\bigl[\varphi(X^{(\Delta t)}(n\Delta t))\bigr]-\E\bigl[\varphi(X_{n})\bigr]&=u^{(\Delta t)}(n\Delta t,x)-\E\bigl[u^{(\Delta t)}(0,X_n)\bigr]\\
&=\sum_{k=0}^{n-1}\Bigl(\E\bigl[u^{(\Delta t)}((n-k)\Delta t,X_k)-u^{(\Delta t)}((n-k-1)\Delta t,X_{k+1})\bigr]\Bigr).
\end{align*}

The details then depend on the numerical scheme. First, an auxiliary continuous-time process $\tilde{X}$ is introduced, see~\eqref{eq:scheme_expo_tilde} and~\eqref{eq:scheme_implicit_tilde}. It satisfies $\tilde{X}(k\Delta t)=X_k$ for all $k\in\N$. Second, It\^o formula is applied, and the Kolmogorov equation~\eqref{eq:Kolmo_Deltat} is used. Theorem~\ref{th:weak_expo} (numerical scheme given by~\eqref{eq:scheme_expo}) is proved in Section~\ref{sec:expo}. Theorem~\ref{th:weak_implicit} (numerical scheme given by~\eqref{eq:scheme_implicit}) is proved in Section~\ref{sec:impl}.

Spatial derivatives $Du^{(\Delta t)}(t,x)$ and $D^2u^{(\Delta t)}(t,x)$ appear in the expansion of the error obtained following this standard strategy. In infinite dimension, see~\cite{Andersson_Larsson:16,Brehier_Debussche:17,Debussche:11,Wang_Gan:13}, appropriate regularity properties are required to obtain the weak order of convergence $\frac12$. They are studied in Section~\ref{sec:kolmo}.


\section{Regularity properties for solutions of Kolmogorov equations}\label{sec:kolmo}

This section is devoted to state and prove regularity properties of the function $u^{(\Delta t)}$, defined by~\eqref{eq:u}, solution of the Kolmogorov equation \eqref{eq:Kolmo_Deltat} associated to the auxiliary equation \eqref{eq:SPDE_Deltat}. The main difficulty and novelty is due to the poor regularity property of $\Psi_{\Delta t}$: even if for fixed $\Delta t$, $\Psi_{\Delta t}$ is globally Lipschitz continuous, there is no bound which is uniform in $\Delta t>0$, since $\Psi_0$ is polynomial of degree $3$.

Theorems~\ref{th:Du} and~\ref{th:D2u} below are the main results of this section, and they are of interest beyond the analysis of weak convergence rates. They are natural generalizations in a non-globally Lipschitz framework of the estimates provided in~\cite{Debussche:11}, and extended in~\cite{Brehier_Debussche:17} with nonlinear diffusion coefficients. We emphasize that the right-hand sides in the estimates~\eqref{eq:th_Du} and~\eqref{eq:th_D2u} do not depend on $\Delta t$.

\begin{theo}\label{th:Du}
Let $T\in(0,\infty)$ and $\Delta t_0\in(0,1]$.  For every $\alpha\in[0,1)$, there exists $C_\alpha(T,\Delta t_0)\in(0,\infty)$ such that, for all $\Delta t\in(0,\Delta t_0)$, $x\in E$, $h\in H$ and $t\in(0,T]$,
\begin{equation}\label{eq:th_Du}
|Du^{(\Delta t)}(t,x).h|\le \frac{C_\alpha(T,\Delta t_0)(1+|x|_E^2)\|\varphi\|_{1,\infty}}{t^\alpha}|(-A)^{-\alpha}h|_H.
\end{equation}
\end{theo}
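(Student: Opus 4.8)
The plan is to differentiate the mild formulation of \eqref{eq:SPDE_Deltat} with respect to the initial condition, obtain an SPDE (actually a random PDE) for the derivative process $\eta^h(t) = D_x X^{(\Delta t)}(t,x).h$, and then exploit the one-sided Lipschitz structure of $\Psi_{\Delta t}$ to get a bound on $\eta^h$ in negative-order norms that is uniform in $\Delta t$. Concretely, $\eta^h$ satisfies
\[
\eta^h(t) = e^{tA}h + \int_0^t e^{(t-s)A}\Psi_{\Delta t}'(X^{(\Delta t)}(s))\eta^h(s)\,ds,
\]
and by the chain rule $Du^{(\Delta t)}(t,x).h = \E[D\varphi(X^{(\Delta t)}(t,x)).\eta^h(t)]$, so that $|Du^{(\Delta t)}(t,x).h| \le \|\varphi\|_{1,\infty}\,\E[|\eta^h(t)|_H]$. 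The task reduces to showing $\E[|\eta^h(t)|_H] \le C_\alpha(T,\Delta t_0)(1+|x|_E^2) t^{-\alpha}|(-A)^{-\alpha}h|_H$.

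First I would treat the case $\alpha = 0$: testing the equation for $\eta^h$ against $\eta^h$ itself, using $\langle A\eta^h,\eta^h\rangle \le 0$ and the one-sided Lipschitz bound $\langle \Psi_{\Delta t}'(X^{(\Delta t)}(s))\eta^h(s),\eta^h(s)\rangle \le e^{\Delta t}|\eta^h(s)|_H^2$ from Lemma~\ref{lem:rappel}, a Gronwall argument gives $|\eta^h(t)|_H \le e^{e^{\Delta t_0} t}|h|_H$, pathwise; this handles $\alpha=0$ with no growth factor in $|x|_E$. For $\alpha \in (0,1)$ the idea is to split $\eta^h(t) = e^{tA}h + \zeta^h(t)$ where $\zeta^h(t) = \int_0^t e^{(t-s)A}\Psi_{\Delta t}'(X^{(\Delta t)}(s))\eta^h(s)\,ds$. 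The first term is immediately controlled: $|e^{tA}h|_H \le C_\alpha t^{-\alpha}|(-A)^{-\alpha}h|_H$ by the standard smoothing estimate $\|(-A)^{\alpha}e^{tA}\|_{\mathcal L(H)} \le C_\alpha t^{-\alpha}$. For the second term I would estimate $|\zeta^h(t)|_H$ by putting $\alpha + \epsilon$ derivatives of the semigroup against the integrand and invoking Lemma~\ref{lem:Psi} (with a suitable choice of $\eta \in (\frac14,1)$, $\epsilon>0$ small), which yields, after also using the smoothing $|(-A)^{\frac\alpha2+\epsilon}e^{sA}h|_H \le C s^{-\frac\alpha2-\epsilon-\alpha'}|(-A)^{-\alpha'}h|_H$ applied to the already-obtained structure $\eta^h(s) = e^{sA}h + \zeta^h(s)$, a self-improving integral inequality for the quantity $t\mapsto \sup_{s\le t} s^{\alpha}|\zeta^h(s)|_H$ whose kernel is integrable (the exponents $-\alpha$ in front and a Beta-function-type convolution combine to something bounded on $[0,T]$). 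The factors $(1+|x|_E)$ coming from Lemma~\ref{lem:Psi} together with the $L^\infty$ moment bound of Lemma~\ref{lem:moment-Xdeltat} (which after taking expectation and Cauchy–Schwarz, or simply using a high moment, produces the $1+|x|_E^2$ on the right) account for the polynomial dependence on $|x|_E$; one extra power arises because $|(-A)^{\frac\alpha2+\epsilon}X^{(\Delta t)}(s)|_H$ must itself be controlled, which costs a spatial-regularity estimate on the auxiliary process analogous to Lemma~\ref{lem:moment-Xdeltat} but in a fractional Sobolev norm, again uniform in $\Delta t$ by the one-sided Lipschitz bound.

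The main obstacle is precisely making the bootstrap/fixed-point argument for $\zeta^h$ rigorous while keeping all constants independent of $\Delta t$: the derivative $\Psi_{\Delta t}'$ is not uniformly Lipschitz as $\Delta t \to 0$, so one cannot bound $|(-A)^{\frac\alpha2+\epsilon}(\Psi_{\Delta t}'(x)y)|$ naively — this is exactly the point of Lemma~\ref{lem:Psi}, and the proof must route every estimate through it (or through the one-sided Lipschitz condition) rather than through any Lipschitz constant of $\Psi_{\Delta t}'$. A secondary technical subtlety is the singularity at $s=0$ in the convolution: the product of the $s^{-\frac\alpha2-\epsilon}$ from one semigroup factor, the $s^{-\alpha}$ hidden in $\eta^h(s)$, and the $(t-s)^{-\text{something}}$ from the outer semigroup must have total exponent strictly less than $1$, which is what forces $\alpha < 1$ and pins down the admissible range of $\epsilon$; I would choose $\epsilon$ small depending on how close $\alpha$ is to $1$ and verify the exponent bookkeeping carefully. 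Finally, all computations are to be carried out on the $\delta$-regularized equation (noise $e^{\delta A}dW$) as explained in the text, the point being that none of the estimates above see $\delta$, so one passes to the limit $\delta \to 0$ at the end.
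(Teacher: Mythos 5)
Your overall strategy --- differentiate the flow, write $\eta^h(t)=e^{tA}h+\zeta^h(t)$, control the free term by the smoothing of $e^{tA}$, and bound the remainder separately --- is the paper's, and your treatment of $\alpha=0$ via the energy estimate and the one-sided bound $\Psi_{\Delta t}'\le e^{\Delta t_0}$ is correct. The gap is in how you close the estimate for $\zeta^h$ when $\alpha\in(0,1)$. You keep the heat semigroup $e^{(t-s)A}$ as propagator, so the integrand contains $\Psi_{\Delta t}'(X^{(\Delta t)}(s))\eta^h(s)$ with the unknown $\zeta^h(s)$ inside, and you propose to close via Lemma~\ref{lem:Psi} and a self-improving/Gronwall iteration for $s\mapsto s^{\alpha}|\zeta^h(s)|_H$. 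Any such iteration with the random coefficient $G(s)=(1+|X^{(\Delta t)}(s)|_E)\,|(-A)^{\frac{\alpha}{2}+\epsilon}X^{(\Delta t)}(s)|_H$ produces, pathwise, a factor of the form $\exp\bigl(c\int_0^T G\bigr)$ (a Mittag--Leffler bound for the singular kernel), not the single polynomial factor $(1+|x|_E^2)$ claimed in the statement; taking expectations would then require exponential moments of a quadratic functional of the solution, which are neither provided by Lemma~\ref{lem:moment-Xdeltat} nor established anywhere in the paper. Two further obstructions: Lemma~\ref{lem:Psi} only covers exponents below $\frac12$ while the theorem claims all $\alpha\in[0,1)$, and its right-hand side requires a moment bound on $|(-A)^{\frac{\alpha}{2}+\epsilon}X^{(\Delta t)}(s)|_H$ uniform in $\Delta t$ (singular as $s\to0$ for $x\in E$), which you would still have to prove.

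The fix is the move you already make for $\alpha=0$, applied to the inhomogeneous equation: absorb the $\Psi_{\Delta t}'\zeta^h$ term into the propagator, i.e.\ write $\zeta^h(t)=\int_0^t U(t,s)\bigl(\Psi_{\Delta t}'(X^{(\Delta t)}(s,x))e^{sA}h\bigr)ds$, where $U(t,s)$ is the evolution family of $\frac{dv}{dt}=Av+\Psi_{\Delta t}'(X^{(\Delta t)}(t,x))v$. The energy estimate gives $\|U(t,s)\|_{\mathcal{L}(H)}\le C(T,\Delta t_0)$ with a \emph{deterministic} constant, the source is bounded by $|\Psi_{\Delta t}'(X^{(\Delta t)}(s,x))|_E\,C_\alpha s^{-\alpha}|(-A)^{-\alpha}h|_H$ with $s^{-\alpha}$ integrable for every $\alpha<1$, and a single application of $|\Psi_{\Delta t}'(z)|\le C(\Delta t_0)(1+|z|^2)$ together with Lemma~\ref{lem:moment-Xdeltat} yields the factor $(1+|x|_E^2)$. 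Neither Lemma~\ref{lem:Psi} nor any fractional spatial regularity of $X^{(\Delta t)}$ is needed for this theorem.
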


This may be interpreted as a regularization property: the assumption $Du^{(\Delta t)}(0,x)\in H$ implies that, for positive $t$, $(-A)^{\alpha}Du^{(\Delta t)}(t,x)\in H$ with $\alpha\in(0,1)$.

\begin{theo}\label{th:D2u}
Let $T\in(0,\infty)$ and $\Delta t_0\in(0,1]$. For every $\beta,\gamma\in[0,1)$, with the condition $\beta+\gamma<1$, there exists $C_{\beta,\gamma}(T,\Delta t_0)\in(0,\infty)$ such that, for all $\Delta t\in(0,\Delta t_0)$, $x\in E$, $h,k\in H$ and $t\in(0,T]$,
\begin{equation}\label{eq:th_D2u}
|D^2u^{(\Delta t)}(t,x).(h,k)|\le \frac{C_{\beta,\gamma}(T,\Delta t_0)(1+|x|_E^7)\|\varphi\|_{2,\infty}}{t^{\beta+\gamma}}|(-A)^{-\beta}h|_H|(-A)^{-\gamma}k|_H.
\end{equation}
\end{theo}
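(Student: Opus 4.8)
The plan is to differentiate twice the variational equations satisfied by the derivatives of the stochastic flow $x \mapsto X^{(\Delta t)}(t,x)$, and to estimate the resulting quantities via mild formulations, exploiting the analyticity of $(e^{tA})_{t\ge 0}$, the one-sided Lipschitz bound and the polynomial growth bounds on $\Psi_{\Delta t}', \Psi_{\Delta t}''$ from Lemma~\ref{lem:rappel}, together with the crucial factorization estimate of Lemma~\ref{lem:Psi} and the moment bounds of Lemma~\ref{lem:moment-Xdeltat}. Concretely, write $\eta^h(t) = D_x X^{(\Delta t)}(t,x).h$ and $\zeta^{h,k}(t) = D_x^2 X^{(\Delta t)}(t,x).(h,k)$; then
\[
u^{(\Delta t)}(t,x) = \E[\varphi(X^{(\Delta t)}(t,x))],
\]
so by the chain rule
\[
D^2u^{(\Delta t)}(t,x).(h,k) = \E\bigl[D\varphi(X^{(\Delta t)}(t,x)).\zeta^{h,k}(t)\bigr] + \E\bigl[D^2\varphi(X^{(\Delta t)}(t,x)).(\eta^h(t),\eta^k(t))\bigr].
\]
Using $\|\varphi\|_{2,\infty}$ it then suffices to control $\E|\zeta^{h,k}(t)|_H$ and $\E[|\eta^h(t)|_H|\eta^k(t)|_H]$, with the claimed inverse powers of $t$ and powers of $|(-A)^{-\beta}h|_H$, $|(-A)^{-\gamma}k|_H$.

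First I would record the first-order estimate: $\eta^h$ solves $\dot\eta^h = A\eta^h + \Psi_{\Delta t}'(X^{(\Delta t)})\eta^h$, $\eta^h(0)=h$, and standard energy estimates using the one-sided Lipschitz bound give $\sup_{[0,T]}\E|\eta^h(t)|_H^2 \le C|h|_H^2$; then, using the mild formula $\eta^h(t) = e^{tA}h + \int_0^t e^{(t-s)A}\Psi_{\Delta t}'(X^{(\Delta t)}(s))\eta^h(s)\,ds$, the smoothing $\|(-A)^\theta e^{\sigma A}\|_{\mathcal{L}(H)}\le C_\theta\sigma^{-\theta}$ and Lemma~\ref{lem:Psi} to absorb the $(-A)^{-\alpha}$-type factor from $h$, one obtains a Volterra/Gronwall inequality yielding $\E|\eta^h(t)|_H \le C_\alpha(T)(1+|x|_E^2) t^{-\alpha}|(-A)^{-\alpha}h|_H$, which is essentially the content already behind Theorem~\ref{th:Du}. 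By Cauchy–Schwarz and a splitting of the exponents (choosing auxiliary parameters $\beta', \gamma'$ slightly larger than $\beta,\gamma$ but still summing to less than $1$, with $\epsilon$-room supplied by inequality~\eqref{eq:inequality1} as in the proof of Lemma~\ref{lem:Psi}), this controls the second term above by $C(1+|x|_E^{4}) t^{-\beta-\gamma}|(-A)^{-\beta}h|_H|(-A)^{-\gamma}k|_H$.

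For the genuinely second-order part, $\zeta^{h,k}$ satisfies
\[
\dot\zeta^{h,k} = A\zeta^{h,k} + \Psi_{\Delta t}'(X^{(\Delta t)})\zeta^{h,k} + \Psi_{\Delta t}''(X^{(\Delta t)})(\eta^h,\eta^k), \quad \zeta^{h,k}(0)=0,
\]
whose mild form is $\zeta^{h,k}(t) = \int_0^t e^{(t-s)A}\Psi_{\Delta t}'(X^{(\Delta t)}(s))\zeta^{h,k}(s)\,ds + \int_0^t e^{(t-s)A}\bigl(\Psi_{\Delta t}''(X^{(\Delta t)}(s))\eta^h(s)\eta^k(s)\bigr)\,ds$. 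The key is to bound the $H$-norm of the inhomogeneous term: using inequality~\eqref{eq:inequality1} to pass from $L^1$ to a negative fractional Sobolev norm, then an $L^1$-Hölder factorization $|\Psi_{\Delta t}''(X^{(\Delta t)})\eta^h\eta^k|_{L^1} \le \sup|\Psi_{\Delta t}''(X^{(\Delta t)})|\,|\eta^h|_{L^2}|\eta^k|_{L^2}$ together with the linear growth $|\Psi_{\Delta t}''(z)|\le C(1+|z|)$ (uniform in $\Delta t$) and Lemma~\ref{lem:moment-Xdeltat}, one estimates $|(-A)^{-\eta}e^{(t-s)A}(\Psi_{\Delta t}''(X^{(\Delta t)}(s))\eta^h(s)\eta^k(s))|_H$ and integrates. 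Applying the first-order bounds for $\eta^h,\eta^k$ (which each already carry a power of $|x|_E$ and an integrable singularity in $s$), and controlling the $\Psi_{\Delta t}'(X^{(\Delta t)})\zeta^{h,k}$ term by the same Lemma~\ref{lem:Psi} mechanism, one closes a singular Gronwall inequality (of the type solvable by iterating and summing a Mittag-Leffler-type series) for $t\mapsto t^{\beta+\gamma}\E|\zeta^{h,k}(t)|_H$. The power $|x|_E^7$ arises from multiplying the contributions: roughly $|x|_E^2$ from each of $\eta^h,\eta^k$, one more from the linear growth of $\Psi''_{\Delta t}$ evaluated along the process, plus lower-order terms absorbed into $(1+|x|_E^7)$.

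The main obstacle, as in Lemma~\ref{lem:Psi}, is that $\Psi_{\Delta t}'$ and $\Psi_{\Delta t}''$ are not globally Lipschitz and their Lipschitz constants blow up as $\Delta t\to 0$, so inequality~\eqref{eq:inequality3} cannot be invoked naively; the resolution is exactly the truncation-plus-linear-growth argument already used in the proof of Lemma~\ref{lem:Psi}, which must be applied carefully to each occurrence of $\Psi_{\Delta t}'(X^{(\Delta t)})$ and $\Psi_{\Delta t}''(X^{(\Delta t)})$ so that all constants stay uniform in $\Delta t\in(0,\Delta t_0)$. A secondary technical point is the bookkeeping of the fractional exponents: one needs $\beta+\gamma<1$ precisely so that, after inserting the $\epsilon$-losses from~\eqref{eq:inequality1}–\eqref{eq:inequality2} and the $s^{-\beta}$, $s^{-\gamma}$, $(t-s)^{-(\cdot)}$ factors, the time integrals remain convergent (Beta-function bounds), and so that the Gronwall argument for $\zeta^{h,k}$ iterates to a finite sum. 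Finally, since all bounds on the regularized problem (noise $e^{\delta A}dW$) are uniform in $\delta$, passing $\delta\to 0$ gives the stated estimate for $u^{(\Delta t)}$.
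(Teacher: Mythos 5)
Your overall skeleton matches the paper's: the representation $D^2u^{(\Delta t)}(t,x).(h,k)=\E[D\varphi.\zeta^{h,k}]+\E[D^2\varphi.(\eta^h,\eta^k)]$, the control of the second term by the first-order estimate behind Theorem~\ref{th:Du}, and the use of inequality~\eqref{eq:inequality1} to bound the source term $\Psi_{\Delta t}''(X^{(\Delta t)})\eta^h\eta^k$ in a negative Sobolev norm via $|\Psi_{\Delta t}''(X^{(\Delta t)})|_E|\eta^h|_H|\eta^k|_H$. However, your closure of the estimate for $\zeta^{h,k}$ has a genuine gap. You write the mild form with the heat semigroup, keep $\int_0^t e^{(t-s)A}\Psi_{\Delta t}'(X^{(\Delta t)}(s))\zeta^{h,k}(s)\,ds$ as a Volterra perturbation, and propose to close a singular Gronwall inequality. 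But the kernel of that Volterra term is the \emph{random} quantity $|\Psi_{\Delta t}'(X^{(\Delta t)}(s))|_E\le C(1+\sup_s|X^{(\Delta t)}(s)|_E^2)$, whose Lipschitz-type control is not uniform in $\Delta t$; any pathwise Gronwall (or Mittag--Leffler iteration) then produces a factor of the form $\exp\bigl(C\int_0^T|\Psi_{\Delta t}'(X^{(\Delta t)}(s))|_E\,ds\bigr)$, and the paper only establishes \emph{polynomial} moments of $\sup_s|X^{(\Delta t)}(s)|_E$ (Lemma~\ref{lem:moment-Xdeltat}), not exponential moments of its square. So the expectation of your Gronwall constant is not controlled by the tools available, and Lemma~\ref{lem:Psi} does not repair this, since it also returns random coefficients multiplying $\zeta^{h,k}$.

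The paper avoids this entirely by working with the random evolution operator $U(t,s)$ generated by $A+\Psi_{\Delta t}'(X^{(\Delta t)}(t))$, writing $\zeta^{h,k}(t)=\int_0^t U(t,s)\bigl(\Psi_{\Delta t}''(X^{(\Delta t)}(s))\eta^h(s)\eta^k(s)\bigr)ds$ with no remaining Volterra term. The one-sided bound $\Psi_{\Delta t}'\le e^{\Delta t_0}$ gives the \emph{deterministic} energy estimate $\|U(t,s)\|_{\mathcal{L}(H)}\le C(T,\Delta t_0)$, and the key new ingredient — absent from your proposal — is Lemma~\ref{lem:aux-eta}: the smoothing estimate $|U(t,s)h|_H\le C(t-s)^{-1/2}\bigl(1+\sup_r|\Psi_{\Delta t}'(X^{(\Delta t)}(r))|_E\bigr)|(-A)^{-1/2}h|_H$, proved by a duality argument on the adjoint backward equation. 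This is what lets one combine the $(t-s)^{-1/2}$ singularity with the $s^{-\beta-\gamma}$ singularity coming from $\eta^h,\eta^k$ and still cover the full range $\beta+\gamma<1$ with only polynomial moments of the solution (and it also accounts for the exponent $7$ in $(1+|x|_E^7)$, via the extra factor $1+\sup_r|\Psi_{\Delta t}'(X^{(\Delta t)}(r))|_E$). Without this lemma or an equivalent substitute, your argument does not close.
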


\begin{rem}
Results similar to Theorems~\ref{th:Du} and~\ref{th:D2u} are studied in~\cite{Kopec}, with different techniques.

Note that we obtain stronger results. In Theorem~\ref{th:Du}, one may choose $\alpha\in[0,1)$ instead of $\alpha\in[0,\frac12)$. In Theorem~\ref{th:D2u}, one may choose $\beta,\gamma\in[0,1)$ such that $\beta+\gamma<1$, instead of $\beta,\gamma\in[0,\frac12)$. The stronger statements are useful below, to simplify the treatments of several error terms.
\end{rem}

Expressions of $Du^{(\Delta t)}(t,x)$ and $D^2u^{(\Delta t)}(t,x)$ are given below: for $h,k\in H$, $x\in H$, and $t\ge 0$,
\begin{equation}\label{eq:Du-D2u}
\begin{aligned}
Du^{(\Delta t)}(t,x).h&=\E\bigl[D\varphi(X^{(\Delta t)}(t,x)).\eta^h(t,x)\bigr],\\
D^2u^{(\Delta t)}(t,x).(h,k)&=\E\bigl[D\varphi(X^{(\Delta t)}(t,x)).\zeta^{h,k}(t,x)\bigr]\\
&+\E\bigl[D^2\varphi(X^{(\Delta t)}(t,x)).(\eta^h(t,x),\eta^k(t,x))\bigr],
\end{aligned}
\end{equation}
where the processes $\eta^h(\cdot,x)$ and $\zeta^{h,k}(\cdot,x)$ are the solutions the random PDEs
\begin{equation}\label{eq:eta}
\frac{d\eta^h(t,x)}{dt}=A\eta^h(t,x)+\Psi_{\Delta t}'(X^{(\Delta t)}(t,x))\eta^h(t,x),
\end{equation}
with initial condition $\eta^h(0,x)=h$, and
\begin{equation}\label{eq:zeta}
\frac{d\zeta^{h,k}(t,x)}{dt}=A\zeta^{h,k}(t,x)+\Psi_{\Delta t}'(X^{(\Delta t)}(t,x))\zeta^{h,k}(t,x)+\Psi_{\Delta t}''(X^{(\Delta t)}(t,x))\eta^h(t,x)\eta^k(t,x),
\end{equation}
with initial condition $\zeta^{h,k}(0,x)=0$.

To simplify notation, the parameter $\Delta t$ is omitted in the notation for $\eta^h(t,x)$ and $\zeta^{h,k}(t,x)$.

\subsection{Proof of Theorem~\ref{th:Du}}

Following the strategy in~\cite{Brehier_Debussche:17}, introduce the auxiliary process
\[
\tilde{\eta}^h(t,x)=\eta^{h}(t,x)-e^{tA}h.
\]
Then, thanks to~\eqref{eq:Du-D2u},
\[
Du^{(\Delta t)}(t,x).h=\E\bigl[D\varphi(X^{(\Delta t)}(t,x)).(e^{tA}h)\bigr]+\E\bigl[D\varphi(X^{(\Delta t)}(t,x)).\tilde{\eta}^h(t,x)\bigr],
\]
and thanks to~\eqref{eq:eta},
\[
\frac{d\tilde{\eta}^h(t,x)}{dt}=A\tilde{\eta}^h(t,x)+\Psi_{\Delta t}'(X^{(\Delta t)}(t,x))\tilde{\eta}^h(t,x)+\Psi_{\Delta t}'(X^{(\Delta t)}(t,x))e^{tA}h~,\quad \tilde{\eta}^h(0)=0.
\]

On the one hand, for any $\alpha\in[0,1)$, $h\in H$ and $t>0$,
\begin{equation}\label{eq:Du_hand1}
\big|\E\bigl[D\varphi(X^{(\Delta t)}(t,x)).(e^{tA}h)\bigr]\big|\le \|\varphi\|_{1,\infty}|e^{tA}h|_H\le \frac{C_\alpha}{t^{\alpha}}\|\varphi\|_{1,\infty}|(-A)^{-\alpha}h|_H.
\end{equation}
On the other hand, the process $\tilde{\eta}^h$ may be expressed as
\begin{equation}\label{eq:tilde_eta}
\tilde{\eta}^h(t,x)=\int_{0}^{t}U(t,s)\bigl(\Psi_{\Delta t}'(X^{(\Delta t)}(s,x))e^{sA}h\bigr)ds,
\end{equation} 
where $\bigl(U(t,s)h\bigr)_{t\ge s}$ solves, for every $h\in H$,
\begin{equation}\label{eq:U}
\frac{d U(t,s)h}{dt}=\bigl(A+\Psi_{\Delta t}'(X^{(\Delta t)}(t,x))\bigr)U(t,s)h~,\quad U(s,s)h=h.
\end{equation}
A straightforward energy estimate, using the one-sided Lipschitz condition for $\Psi_{\Delta t}'$, yields
\[
\frac{1}{2}\frac{d|U(t,s)h|^2}{dt}\le (e^{\Delta t_0}-\lambda_1)|U(t,s)h|^2,
\]
thus by Gronwall Lemma $|U(t,s)h|^2\le C(T,\Delta t_0)|h|^2$, for all $s\le t\le T$.

Thus, thanks to~\eqref{eq:tilde_eta}, for $\alpha\in[0,1)$,
\begin{align*}
|\tilde{\eta}^h(t,x)|_H&\le C(T,\Delta t_0)\int_{0}^{t}|\Psi_{\Delta t}'(X^{(\Delta t)}(s,x))e^{sA}h|_Hds\\
&\le C(T,\Delta t_0)\underset{s\in[0,T]}\sup~|\Psi_{\Delta t}'(X^{(\Delta t)}(s,x))|_E \int_{0}^{t}\frac{C_\alpha}{s^{\alpha}}ds |(-A)^{-\alpha}h|_H.
\end{align*}
Thanks to Lemmas~\ref{lem:rappel} and~\ref{lem:moment-Xdeltat},
\begin{equation}\label{eq:Du_hand2}
\big|\E\bigl[D\varphi(X^{(\Delta t)}(t,x)).\tilde{\eta}^h(t,x)\bigr]\big|\le \frac{C_\alpha(T,\Delta t_0)}{t^{\alpha}}(1+|x|_E^2)\|\varphi\|_{1,\infty}|(-A)^{-\alpha}h|_H.
\end{equation}
Combining~\eqref{eq:Du_hand1} and~\eqref{eq:Du_hand2} concludes the proof of~\eqref{eq:th_Du}.

For future reference, note that for $t\in(0,T]$,
\begin{equation}\label{eq:estimee_eta}
|\eta^h(t,x)|_H\le \frac{C_\alpha(T,\Delta t_0)}{t^\alpha}|(-A)^{-\alpha}h|_H (1+\underset{s\in[0,T]}\sup~|\Psi_{\Delta t}'(X^{(\Delta t)}(s,x))|_E),
\end{equation}

\subsection{Proof of Theorem~\ref{th:D2u}}

Thanks to~\eqref{eq:Du-D2u},
\[
|D^2u^{(\Delta t)}(t,x).(h,k)|\le \|\varphi\|_{1,\infty}\E[|\zeta^{h,k}(t,x)|_H\bigr]+\|\varphi\|_{2,\infty}\E[|\eta^h(t,x)|_H|\eta^k(t,x)|_H].
\]

On the one hand, thanks to~\eqref{eq:estimee_eta}, using Lemma \ref{lem:rappel} and Lemma \ref{lem:moment-Xdeltat}, we obtain
\begin{equation}\label{eq:D2u_hand1}
\E[|\eta^h(t,x)|_H|\eta^k(t,x)|_H]\le \frac{C_{\beta,\gamma}(T,\Delta t_0)(1+|x|_E^4)}{t^{\beta+\gamma}}|(-A)^{-\beta}h|_H|(-A)^{-\gamma}|_H,
\end{equation}
for all $\beta,\gamma\in[0,1)$.

On the other hand, using~\eqref{eq:U}, from the equation~\eqref{eq:zeta}, the process $\zeta^{h,k}$ may be expressed as
\begin{equation}\label{eq:zeta_mild}
\zeta^{h,k}(t,x)=\int_{0}^{t}U(t,s)\bigl(\Psi_{\Delta t}''(X^{(\Delta t)}(s,x))\eta^h(s,x)\eta^k(s,x)\bigr)ds.
\end{equation}

Thanks to~\eqref{eq:inequality1},
\[
\big|(-A)^{-\frac12}\bigl(\Psi_{\Delta t}''(X^{(\Delta t)}(s,x))\eta^h(s,x)\eta^k(s,x)\bigr)\big|_H
\le C\big|\Psi_{\Delta t}''(X^{(\Delta t)}(s,x))\big|_E |\eta^h(s,x)|_H |\eta^k(s,x)|_H.
\]
The following result allows us to use this inequality in~\eqref{eq:zeta_mild}.
\begin{lemma}\label{lem:aux-eta}
Let $T\in(0,\infty)$ and $\Delta t_0\in(0,1)$. There exists $C(T,\Delta t_0)\in(0,\infty)$ such that, for all $\Delta t\in(0,\Delta t_0)$, $x\in E$, $h\in H$, $0\le s<t\le T$,
\[
|U(t,s)h|_H\le \frac{C(T,\Delta t_0)}{(t-s)^{\frac12}}\bigl(1+\underset{0\le r\le T}\sup~|\Psi_{\Delta t}'(X^{(\Delta t)}(r,x))|_{E}\bigr)|(-A)^{-1/2}h|_H.
\]
\end{lemma}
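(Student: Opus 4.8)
The plan is to prove this smoothing estimate by a duality argument, transferring the problem onto the adjoint evolution family. A direct attack on the mild formulation $U(t,s)h=e^{(t-s)A}h+\int_s^t e^{(t-\sigma)A}\Psi_{\Delta t}'(X^{(\Delta t)}(\sigma,x))U(\sigma,s)h\,d\sigma$ would require controlling $|U(\sigma,s)h|_H\lesssim(\sigma-s)^{-1/2}$ \emph{inside} the integral, which is exactly what is to be proved; resolving this self-referential weakly singular Volterra inequality by iteration produces a Mittag--Leffler type factor, hence an \emph{exponential} dependence on $M:=\sup_{0\le r\le T}|\Psi_{\Delta t}'(X^{(\Delta t)}(r,x))|_E$, whereas the statement asks for a \emph{linear} one. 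The point of the detour through the adjoint is precisely to avoid this.

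Concretely, fix $t$ and write $B(r)$ for the operator of multiplication by $\Psi_{\Delta t}'(X^{(\Delta t)}(r,x))$, which is bounded and self-adjoint on $H$ with $\|B(r)\|_{\mathcal{L}(H)}\le M$ almost surely (Lemma~\ref{lem:rappel} and $X^{(\Delta t)}(r,x)\in E$). Since $A$ and each $B(r)$ are self-adjoint, for $k\in H$ the map $s\mapsto V(s):=U(t,s)^\star k$ solves the backward equation $-\frac{d}{ds}V=(A+B(s))V$, $V(t)=k$, equivalently $V(s)=e^{(t-s)A}k+\int_s^t e^{(\sigma-s)A}B(\sigma)V(\sigma)\,d\sigma$. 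Then for $h\in H$ and $|k|_H=1$, using that $(-A)^{-1/2}$ is bounded and self-adjoint on $H$,
\[
|\langle U(t,s)h,k\rangle|=|\langle (-A)^{-1/2}h,(-A)^{1/2}V(s)\rangle|\le |(-A)^{-1/2}h|_H\,|(-A)^{1/2}V(s)|_H,
\]
so it suffices to bound $|(-A)^{1/2}V(s)|_H$ by $C(T,\Delta t_0)(1+M)(t-s)^{-1/2}|k|_H$. For this I would first note that $\|U(t,s)^\star\|_{\mathcal{L}(H)}=\|U(t,s)\|_{\mathcal{L}(H)}\le C(T,\Delta t_0)$ by the energy estimate established after~\eqref{eq:U} (a bound \emph{free of} $M$), so $|V(\sigma)|_H\le C(T,\Delta t_0)|k|_H$ for all $\sigma\in[s,t]$. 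Then, applying $(-A)^{1/2}$ to the mild equation for $V$ and using $\|(-A)^{1/2}e^{\tau A}\|_{\mathcal{L}(H)}\le C\tau^{-1/2}$,
\[
|(-A)^{1/2}V(s)|_H\le \frac{C}{(t-s)^{1/2}}|k|_H+CM\int_s^t\frac{|V(\sigma)|_H}{(\sigma-s)^{1/2}}\,d\sigma\le \frac{C}{(t-s)^{1/2}}|k|_H+CMC(T,\Delta t_0)|k|_H(t-s)^{1/2},
\]
and since $t-s\le T$ one has $(t-s)^{1/2}\le T(t-s)^{-1/2}$, giving the required bound; taking the supremum over $|k|_H=1$ and combining with the displayed duality inequality finishes the proof.

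The main obstacle is the one flagged above: obtaining the dependence on $M$ which is \emph{linear} rather than exponential, which is what forces the passage to the adjoint family rather than a straightforward Gr\"onwall argument on $U(t,s)h$ itself. The gain is that in the adjoint formulation the function sitting inside the singular integral, namely $V(\sigma)$, only has to be estimated in $H$ (via the $M$-free energy bound), so no self-referential Volterra estimate in $M$ is incurred, and $M$ enters only once, linearly. A minor technical point is to justify the identity $\langle h,V(s)\rangle=\langle (-A)^{-1/2}h,(-A)^{1/2}V(s)\rangle$, i.e.\ that $V(s)\in D((-A)^{1/2})$: this is guaranteed a posteriori by the estimate itself, and can be made rigorous by running the whole argument on a spectral Galerkin truncation and passing to the limit.
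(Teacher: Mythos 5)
Your proof is correct, but it takes a route that is genuinely different from the one in the paper, even though both arguments rest on the same two pillars: self-adjointness of $A$ and of the multiplication operators $B(r)$, and the $M$-free bound $\|U(t,s)\|_{\mathcal{L}(H)}\le C(T,\Delta t_0)$ coming from the one-sided estimate $\Psi_{\Delta t}'\le e^{\Delta t_0}$ of Lemma~\ref{lem:rappel}. The paper does not dualize the single operator $U(t,s)$; it first proves the integrated estimate~\eqref{eq:lemma_integral}, namely $\int_s^T|U(r,s)h|_H^2\,dr\le C(T,\Delta t_0)(1+M)|(-A)^{-1/2}h|_H^2$, by identifying the adjoint of $h\mapsto \bigl(U(\cdot,s)h\bigr)\in L^2(s,T;H)$ with the solution at time $s$ of a backward equation driven by a source $F$, and controlling $|(-A)^{1/2}\xi_s(s)|$ through two successive energy identities; the pointwise bound of the lemma is then extracted from the mild formulation of~\eqref{eq:U} together with Cauchy--Schwarz in time. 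You instead take the adjoint of $U(t,s)$ itself, observe that $V(s)=U(t,s)^\star k$ solves the homogeneous backward equation with terminal datum $k$, and obtain the smoothing estimate $|(-A)^{1/2}V(s)|_H\lesssim(1+M)(t-s)^{-1/2}|k|_H$ directly from the mild formulation of that adjoint equation, the bound $\|(-A)^{1/2}e^{\tau A}\|_{\mathcal{L}(H)}\le C\tau^{-1/2}$, and the $M$-free a priori bound on $|V(\sigma)|_H$. Your version is somewhat more economical: it bypasses the intermediate $L^2$-in-time estimate (which the paper does not reuse elsewhere) and the energy identity involving $|(-A)\xi_s|_H^2$, and it yields a dependence on $M$ that is genuinely linear, whereas tracking constants in the paper's two-step argument actually produces a slightly worse power of $1+M$ (harmless for the application, since $M$ has moments of all orders). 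Both arguments require the same Galerkin or $e^{\delta A}$-regularization, which the paper sets up globally in Section~2, to justify the formal manipulations; your closing remark on this point is the right one.
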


The proof of Lemma~\ref{lem:aux-eta} is postponed to Section~\ref{sec:proof_lem_aux-eta}. We refer to~\cite{DaPrato_Debussche:17} for a similar result and the idea of the proof.

Thanks to Lemma~\ref{lem:aux-eta} and to~\eqref{eq:zeta_mild}, we get
\begin{align*}
\E|\zeta^{h,k}(t,x)|_{H} \leq \int_{0}^{t}
\frac{C(T,\Delta t_0)}{(t-s)^{\frac12}}&\bigl(1+\underset{0\le r\le T}\sup~|\Psi_{\Delta t}'(X^{(\Delta t)}(r,x))|_{E}\bigr)\\
&\big|\Psi_{\Delta t}''(X^{(\Delta t)}(s,x))\big|_E |\eta^h(s,x)|_H |\eta^k(s,x)|_Hds.
\end{align*}
Thanks to~\eqref{eq:estimee_eta}, and Lemmas~\ref{lem:rappel} and~\ref{lem:moment-Xdeltat}, we obtain
\begin{align*}
\E|\zeta^{h,k}(t,x)|_{H} \leq C_{\beta,\gamma}(T,\Delta t_{0})(1+|x|^{7}_{E})\int_{0}^{t}
\frac{1}{(t-s)^{\frac12}s^{\beta+\gamma}}ds \ |(-A)^{\beta}h|_{H}|(-A)^{\gamma}k|_{H},
\end{align*}
hence
\begin{equation}\label{eq:D2u_hand2}
\E|\zeta^{h,k}(t,x)|_H\le \frac{C_{\beta,\gamma}(T,\Delta t_0)(1+|x|_E^7)}{t^{\beta+\gamma}}|(-A)^{-\beta}h|_H|(-A)^{-\gamma}k|_H.
\end{equation}
Combining~\eqref{eq:D2u_hand1} and~\eqref{eq:D2u_hand2}, thanks to~\eqref{eq:Du-D2u}, then concludes the proof of~\eqref{eq:th_D2u}.

\subsection{Proof of Lemma~\ref{lem:aux-eta}}\label{sec:proof_lem_aux-eta}

We claim that for all $0\le s\le T$ and $h\in H$,
\begin{equation}\label{eq:lemma_integral}
\int_{s}^{T}|U(r,s)h|_H^2 dr \le C(T,\Delta t_0)\bigl(1+\underset{0\le r\le T}\sup~|\Psi_{\Delta t}'(X^{(\Delta t)}(r,x))|_{E}\bigr)|(-A)^{-1/2}h|_H^2.
\end{equation}

Lemma~\eqref{lem:aux-eta} is then a straightforward consequence of~\eqref{eq:lemma_integral}, using the mild formulation
\[
U(t,s)h=e^{(t-s)A}h+\int_{s}^{t}e^{(t-r)A}\Psi_{\Delta t}'(X^{(\Delta t)}(s,x))U(r,s)h ds.
\]

It remains to prove~\eqref{eq:lemma_integral}. Let $s\in[0,T]$ be fixed, and define
\[
\mathcal{U}_s:h\in H\mapsto \bigl(U(t,s)h\bigr)_{s\le t\le T}\in L^2(s,T;H).
\]
Introduce $\mathcal{U}_s^\star:L^2(s,T;H)\to H$ the adjoint of $\mathcal{U}_s$. Then, by a duality argument, the claim~\eqref{eq:lemma_integral} is a straightforward consequence of the following estimate: for all $F\in L^2(s,T;H)$,
\begin{equation}\label{eq:duality}
|(-A)^{\frac12}\mathcal{U}_s^\star F|^2\le C(T,\Delta t_0)\bigl(1+\underset{0\le r\le T}\sup~|\Psi_{\Delta t}'(X^{(\Delta t)}(r,x))|_{E}\bigr)\int_{s}^{T}|F(r)|_H^2dr.
\end{equation}
To prove~\eqref{eq:duality}, let $F\in L^2(s,T;H)$, and observe that $\mathcal{U}_s^\star F=\xi_s(s)$, where $\bigl(\xi_s(t)\bigr)_{s\le t\le T}$ is the solution of the backward evolution equation
\[
\frac{d\xi_s(t)}{dt}=-A\xi_s(t)-\Psi_{\Delta t}'(X^{(\Delta t)}(t,x))\xi_s(t)-F(t)~,\quad \xi_s(T)=0.
\]
Indeed, for all $h\in H$,
\[
\langle  h,\mathcal{U}_s^\star(F)\rangle = \int_{s}^{T}\langle U(t,s)h,F(t)\rangle dt=-\int_{s}^{T}\frac{d}{dt}\bigl(\langle U(t,s)h,\xi_s(t)\rangle\bigr) dt=\langle h,\xi_s(s)\rangle,
\]
using the conditions $\xi_s(T)=0$ and $U(s,s)h=h$.

To obtain the required estimate of $|(-A)^{\frac12}\mathcal{U}_s^\star F|^2=|(-A)^{\frac12}\xi_s(s)|^2$, compute
\begin{align*}
\frac{1}{2}\frac{d}{dt}|(-A)^{\frac12}\xi_s(t)|_H^2&=|(-A)\xi_s(t)|_H^2-\langle \Psi_{\Delta t}'(X^{(\Delta t)}(t,x))\xi_s(t),(-A)\xi_s(t)\rangle\\
&-\langle F(t),(-A)\xi_s(t)\rangle\\
&\ge |(-A)\xi_s(t)|_H^2-|\Psi_{\Delta t}'(X^{(\Delta t)}(t,x))|_{E}|\xi_s(t)|_H|(-A)\xi_s(t)|_H\\
&-|F(t)|_H|(-A)\xi_s(t)|_H\\
&\ge -\frac12|\Psi_{\Delta t}'(X^{(\Delta t)}(t,x))|_{E}^2|\xi_s(t)|_H^2-\frac12|F(t)|_H^2,
\end{align*}
thanks to Young inequality. Integrating from $t=s$ to $t=T$, and using $\xi_s(T)=0$, we have
\begin{equation}\label{eq:estim-xis}
|(-A)^{\frac12}\xi_s(s)|^2 \le \underset{0\le t\le T}\sup~|\Psi_{\Delta t}'(X(t,x))|_E^2\int_{s}^{T}|\xi_s(t)|^2dt+\frac12\int_{s}^{T}|F(t)|^2_{H}dt.
\end{equation}
Moreover, using $-\Psi_{\Delta t}'(\cdot)\ge -e^{\Delta t_0}$, see Lemma~\ref{lem:rappel}, thanks to Young inequality, we have
\begin{align*}
\frac{1}{2}\frac{d}{dt}|\xi_s(t)|_H^2&=|(-A)^{1/2}\xi_s(t)|_H^2-e^{\Delta t_0}|\xi_s(t)|_H^2-\langle F(t),\xi_s(t)\rangle\\
&\ge |(-A)^{1/2}\xi_s(t)|_H^2-e^{\Delta t_0}|\xi_s(t)|_H^2-|(-A)^{-\frac12}F(t)|_H|(-A)^{\frac12}\xi_s(t)|_H\\
&\ge -e^{\Delta t_0}|\xi_{s}(t)|_H^2-\frac{1}{2}|(-A)^{-\frac12}F(t)|_H^2,
\end{align*}
then by Gronwall Lemma there exists $C(T,\Delta t_0)\in(0,\infty)$ such that for all $s\le t\le T$,
\[
|\xi_s(t)|^2\le C(T,\Delta t_0)\int_{s}^{T}|(-A)^{-1/2}F(r)|^2dr.
\]
Hence, with the inequality $|(-A)^{-\frac12}\cdot|_H\le C|\cdot|_H$ and Equation \eqref{eq:estim-xis}, we obtain Equation \eqref{eq:duality}, leading to estimate ~\eqref{eq:lemma_integral}, concluding the proof of Lemma~\ref{lem:aux-eta}.


\section{Proof of Theorem~\ref{th:weak_expo}}\label{sec:expo}

The aim of this section is to prove Theorem~\ref{th:weak_expo}. Let the numerical scheme $\bigl(X_n\bigr)_{n\in\N}$ be given by~\eqref{eq:scheme_expo}.

The section is organized as follows. An auxiliary process $\tilde{X}$ and an appropriate decomposition of the error are given in Section~\ref{sec:expo_1}. Error terms are estimated in Section~\ref{sec:expo_2}. Auxiliary results are proved in Section~\ref{sec:expo_3}.

\subsection{Decomposition of the error}\label{sec:expo_1}

As explained in Section~\ref{sec:weak}, the strategy for the weak error analysis requires to apply It\^o formula, hence the definition of an appropriate continuous-time process $\tilde{X}$.

Set, for every $n\in\N$, and every $t\in[n\Delta t,(n+1)\Delta t]$,
\begin{equation}\label{eq:scheme_expo_tilde}
\tilde{X}(t)=e^{(t-n\Delta t)A}\Phi_{t-n\Delta t}(X_n)+\int_{n\Delta t}^{t}e^{(t-s)A}dW(s).
\end{equation}
By construction, $\tilde{X}(n\Delta t)=X_n$ for all $n\in\N$. Moreover,
\[
d\tilde{X}(t)=A\tilde{X}(t)dt+dW(t)+e^{(t-n\Delta t)A}\Psi_0(\Phi_{t-n\Delta t}(X_n))dt~,\quad t\in[n\Delta t,(n+1)\Delta t], n\in\N.
\]
Recall that $\Phi_{t-n\Delta t}$ and $\Psi_0$ are defined by~\eqref{eq:PhiPsi}.

The following result gives moment estimates. Proof is postponed to Section~\ref{sec:expo_3}.
\begin{lemma}\label{lem:borneXtilde}
Let $T\in(0,\infty)$, $\Delta t_0\in(0,1)$ and $M\in\N$. There exists $C(T,\Delta t_0,M)\in(0,\infty)$ such that, for all $\Delta t\in(0,\Delta t_0]$ and $x\in E$,
\[
\underset{t\in[0,T]}\sup~\E[|\tilde{X}(t)|_E^{M}]\le C(T,\Delta t_0,M)(1+|x|_E)^M.
\]
\end{lemma}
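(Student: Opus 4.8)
The plan is to bound the two summands in the definition~\eqref{eq:scheme_expo_tilde} of $\tilde X(t)$ separately, and to reduce everything to results already available: the moment bounds for the scheme in Lemma~\ref{lem:moment-Xn} and the bound~\eqref{eq:borne_W^A} for the stochastic convolution $W^A$. Fix $t\in[0,T]$, let $n\in\N$ be such that $t\in[n\Delta t,(n+1)\Delta t]$, and set $\tau=t-n\Delta t\in[0,\Delta t]$. By the triangle inequality in $E$,
\[
|\tilde X(t)|_E\le \|e^{\tau A}\|_{\mathcal{L}(E)}\,|\Phi_\tau(X_n)|_E+\Bigl|\int_{n\Delta t}^{t}e^{(t-s)A}\,dW(s)\Bigr|_E .
\]

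For the first summand, I would use that $(e^{\sigma A})_{\sigma\ge0}$ is a $C_0$-semigroup on $E$, hence uniformly bounded for $\sigma\in[0,\Delta t_0]$, so that $\|e^{\tau A}\|_{\mathcal{L}(E)}\le C(\Delta t_0)$; and that $\Phi_\tau(0)=0$ together with $|\Phi_\tau'|\le e^{\Delta t_0}$ from Lemma~\ref{lem:rappel} gives $|\Phi_\tau(z)|\le e^{\Delta t_0}|z|$ for all $z\in\R$, whence $|\Phi_\tau(X_n)|_E\le e^{\Delta t_0}|X_n|_E$. Thus the first summand is at most $C(\Delta t_0)|X_n|_E$, and since $n\Delta t\le t\le T$, its $M$-th moment is controlled, uniformly in $n$, $t$ and $\Delta t$, by Lemma~\ref{lem:moment-Xn}.

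For the stochastic convolution over $[n\Delta t,t]$, the key point is that after the change of variable $s\mapsto s+n\Delta t$, and using the stationarity and independence of the increments of the cylindrical Wiener process $W$, this term has the same distribution in $E$ as $W^A(\tau)$, with $W^A$ defined by~\eqref{eq:W^A}. Since $\tau\le\Delta t_0\le1$, estimate~\eqref{eq:borne_W^A} then provides a bound for $\E\bigl[|W^A(\tau)|_E^M\bigr]$ that is independent of $n$, $t$ and $\Delta t$.

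Putting the two estimates together with the elementary inequality $(a+b)^M\le 2^{M-1}(a^M+b^M)$, taking expectations, then the supremum over $t\in[0,T]$, and finally invoking Lemma~\ref{lem:moment-Xn}, yields the stated bound $C(T,\Delta t_0,M)(1+|x|_E)^M$. I do not expect a genuine obstacle here: the non-globally-Lipschitz nonlinearity enters only through the flow map $\Phi_\tau$, which is globally controlled by Lemma~\ref{lem:rappel}; the only steps needing a word of justification are the uniform boundedness of $e^{\tau A}$ on $E$ and the distributional identity for the stochastic-convolution piece, both of which are standard.
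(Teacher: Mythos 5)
Your proof is correct and follows essentially the same route as the paper: split $\tilde X(t)$ into the flow part, controlled via Lemma~\ref{lem:rappel} and Lemma~\ref{lem:moment-Xn}, and the stochastic convolution, controlled via~\eqref{eq:borne_W^A}. The only (immaterial) difference is that you handle the stochastic integral by a distributional identity with $W^A(\tau)$, whereas the paper writes it pathwise as $W^A(t)-e^{(t-n\Delta t)A}W^A(n\Delta t)$ and bounds both terms by~\eqref{eq:borne_W^A}.
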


The error is then decomposed as follows, using It\^o formula, and the Kolmogorov equation~\eqref{eq:Kolmo_Deltat}, with $T=N\Delta t$
\begin{align*}
\E\bigl[u^{(\Delta t)}(T,x)\bigr]&-\E\bigl[u^{(\Delta t)}(0,X_N)\bigr]\\
&=\sum_{k=0}^{N-1}\Bigl(\E\bigl[u^{(\Delta t)}((N-k)\Delta t,X_k)\bigr]-\E\bigl[u^{(\Delta t)}((N-k-1)\Delta t,X_{k+1})\bigr]\Bigr)\\
&=\sum_{k=0}^{N-1}\E\int_{k\Delta t}^{(k+1)\Delta t}\langle Du^{(\Delta t)}(T-t,\tilde{X}(t)),\Psi_{\Delta t}(\tilde{X}(t))-e^{(t-k\Delta t)A}\Psi_0(\Phi_{t-k\Delta t}(X_k))\rangle dt\\
&=\sum_{k=0}^{N-1}\bigl(d_k^1+d_k^2+d_k^3+d_k^4\bigr)
\end{align*}
where
\begin{align*}
d_k^1&=\E\int_{k\Delta t}^{(k+1)\Delta t}\langle Du^{(\Delta t)}(T-t,\tilde{X}(t)),\Psi_{\Delta t}(\tilde{X}(t))-\Psi_{\Delta t}(X_k)\rangle dt,\\
d_k^2&=\E\int_{k\Delta t}^{(k+1)\Delta t}\langle Du^{(\Delta t)}(T-t,\tilde{X}(t)),\bigl(I-e^{(t-k\Delta t)A}\bigr)\Psi_{\Delta t}(X_k)\rangle dt,\\
d_k^3&=\E\int_{k\Delta t}^{(k+1)\Delta t}\langle Du^{(\Delta t)}(T-t,\tilde{X}(t)),e^{(t-k\Delta t)A}\bigl(\Psi_{\Delta t}(X_k)-\Psi_{\Delta t}(\Phi_{t-k\Delta t}(X_k))\bigr)\rangle dt,\\
d_k^4&=\E\int_{k\Delta t}^{(k+1)\Delta t}\langle Du^{(\Delta t)}(T-t,\tilde{X}(t)),e^{(t-k\Delta t)A}\bigl(\Psi_{\Delta t}(\Phi_{t-k\Delta t}(X_k))-\Psi_{0}(\Phi_{t-k\Delta t}(X_k))\bigr)\rangle dt.
\end{align*}

\subsection{Estimates of error terms}\label{sec:expo_2}

\subsection*{Treatment of $d_k^1$}

Let $\eta>\frac14$, $\alpha\in(0,\frac12)$, and $\epsilon>0$, such that $\alpha+3\epsilon<\frac12$.

Assume $k=0$, then, thanks to Theorem~\ref{th:Du} and Lemmas~\ref{lem:rappel} and Lemma~\ref{lem:borneXtilde}
\[
|d_0^1|
\le C(T,\Delta t_0)\|\varphi\|_{1,\infty}\Delta t(1+|x|_E^5).
\]

Assume that $1\le k\le N-1$. Thanks to Theorem~\ref{th:Du},
\begin{align*}
|d_k^1|&\le\int_{k\Delta t}^{(k+1)\Delta t}\E\bigl[|\langle Du^{(\Delta t)}(T-t,\tilde{X}(t)),\Psi_{\Delta t}(\tilde{X}(t))-\Psi_{\Delta t}(X_k)\rangle|\bigr] dt\\
&\le \int_{k\Delta t}^{(k+1)\Delta t}\frac{C_{\eta+\frac{\alpha+\epsilon}{2}}(T,\Delta t_0)}{(T-t)^{\eta+\frac{\alpha+\epsilon}{2}}}\|\varphi\|_{1,\infty}\E\bigl[(1+|\tilde{X}(t)|_E^2)\big|(-A)^{-\eta-\frac{\alpha+\epsilon}{2}}\bigl(\Psi_{\Delta t}(\tilde{X}(t))-\Psi_{\Delta t}(X_k)\bigr)\big|_H\bigr]dt
\end{align*}

Thanks to Lemma~\ref{lem:Psi} and Taylor formula,
\begin{align*}
\big|(-A)^{-\eta-\frac{\alpha+\epsilon}{2}}\bigl(\Psi_{\Delta t}(\tilde{X}(t))-\Psi_{\Delta t}(X_k)\bigr)\big|_H&\le \mathcal{M}(X_k,\tilde{X}(t))\big|(-A)^{-\frac{\alpha}{2}}(\tilde{X}(t)-X_k)\big|_H,
\end{align*}
with
\[
\mathcal{M}(X_k,\tilde{X}(t))\le C_{\alpha,\epsilon}(\Delta t_0)(1+|\tilde{X}(t)|_E^2+|(-A)^{\frac{\alpha+2\epsilon}{2}}\tilde{X}(t)|_H^2+|X_k|_E^2+|(-A)^{\frac{\alpha+2\epsilon}{2}}X_k|_H^2).
\]

Using H\"older inequality, we need to estimate $\bigl(\E \mathcal{M}(X_k,\tilde{X}(t))^4\bigr)^{\frac14}$ and $\bigl(\E\big|(-A)^{-\frac{\alpha}{2}}(\tilde{X}(t)-X_k)\big|_H^2\bigr)^{\frac12}$. The following auxiliary results give the required estimates.
\begin{lemma}\label{lem:borneXtilde_alpha}
Let $T\in(0,\infty)$, $\Delta t_0\in(0,1)$, $\alpha\in[0,\frac{1}{2})$ and $M\in\N$. There exists $C_\alpha(T,\Delta t_0,M)\in(0,\infty)$ such that, for all $\Delta t\in(0,\Delta t_0]$, $x\in E$, and $n\in\N$ with $n\Delta t\le T$,
\[
\bigl(\E[|(-A)^{\frac{\alpha}{2}}W^A(n\Delta t)|_H^{M}]\bigr)^{\frac{1}{M}}+\bigl(\E[|(-A)^{\frac{\alpha}{2}}X_n|_H^{M}]\bigr)^{\frac{1}{M}}\le C_{\alpha}(T,\Delta t_0,M)(1+|x|_E+(n\Delta t)^{-\frac{\alpha}{2}}|x|_H).
\]
\end{lemma}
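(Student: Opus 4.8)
The plan is to prove the two estimates separately, since they have similar structure but the term involving $X_n$ requires an inductive argument on $n$.

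First I would treat the stochastic convolution term $(-A)^{\frac{\alpha}{2}}W^A(n\Delta t)$. Since $W^A(n\Delta t)=\int_0^{n\Delta t}e^{(n\Delta t-s)A}dW(s)$ is a Gaussian random variable in $H$, by equivalence of moments for Gaussian vectors (or the Burkholder–Davis–Gundy inequality in the stochastic-convolution form) it suffices to bound $\E\big[|(-A)^{\frac{\alpha}{2}}W^A(n\Delta t)|_H^2\big]=\sum_{j}\lambda_j^{\alpha}\int_0^{n\Delta t}e^{-2\lambda_j(n\Delta t-s)}\,ds=\sum_j\frac{\lambda_j^{\alpha-1}}{2}\big(1-e^{-2\lambda_j n\Delta t}\big)$. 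This series converges uniformly in $n\Delta t\le T$ precisely because $\alpha<\frac12$ (so $\sum_j\lambda_j^{\alpha-1}<\infty$), giving the constant $C_\alpha(T,\Delta t_0,M)$; higher moments follow by Gaussianity. This part does not need the factor $(n\Delta t)^{-\alpha/2}|x|_H$ — it contributes only to the $1+|x|_E$ part.

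Next I would handle $(-A)^{\frac{\alpha}{2}}X_n$ by induction on $n$, using the reinterpretation of the scheme \eqref{eq:scheme_expo} as the one-step recursion
\[
X_{n+1}=e^{\Delta tA}X_n+\Delta t\,e^{\Delta tA}\Psi_{\Delta t}(X_n)+\int_{n\Delta t}^{(n+1)\Delta t}e^{((n+1)\Delta t-s)A}dW(s),
\]
which iterated gives $X_n=e^{n\Delta tA}x+\sum_{k=0}^{n-1}\Delta t\,e^{(n-k)\Delta tA}\Psi_{\Delta t}(X_k)+W^A(n\Delta t)$. Applying $(-A)^{\frac{\alpha}{2}}$ and taking norms: the first term is bounded using $\|(-A)^{\frac{\alpha}{2}}e^{n\Delta tA}\|_{\mathcal{L}(H)}\le C_\alpha(n\Delta t)^{-\alpha/2}$, producing the $(n\Delta t)^{-\alpha/2}|x|_H$ contribution; the last term is controlled by the first part of the proof. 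For the sum I would use the smoothing bound $\|(-A)^{\frac{\alpha}{2}}e^{(n-k)\Delta tA}\|_{\mathcal{L}(H)}\le C_\alpha((n-k)\Delta t)^{-\alpha/2}$ together with the linear growth $|\Psi_{\Delta t}(z)|\le C(1+|z|^3)$ from Lemma~\ref{lem:rappel}, hence $|\Psi_{\Delta t}(X_k)|_H\le C(1+|X_k|_E^3)$. Taking $M$-th moments, using the uniform moment bound $\E[\sup_{k\Delta t\le T}|X_k|_E^{3M}]\le C(T,\Delta t_0,M)(1+|x|_E^{3M})$ from Lemma~\ref{lem:moment-Xn}, and noting that the discrete Riemann sum $\sum_{k=0}^{n-1}\Delta t\,((n-k)\Delta t)^{-\alpha/2}\le C_\alpha T^{1-\alpha/2}$ is bounded uniformly (again because $\alpha/2<\frac12<1$), this closes the estimate directly — no Gronwall iteration on the $(-A)^{\alpha/2}$-norm is actually needed once Lemma~\ref{lem:moment-Xn} is invoked.

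The main obstacle is bookkeeping the singularity at the initial time: the $(n\Delta t)^{-\alpha/2}|x|_H$ term is unavoidable because $x$ is only assumed in $E\subset H$, not in $D((-A)^{\alpha/2})$, so I must keep the splitting $x\in E$ (for the nonlinear and supremum estimates) versus $x\in H$ (for the regularizing-semigroup estimate on the linear flow) consistent throughout, and verify that the discrete convolution sum $\sum_{k=0}^{n-1}\Delta t\,((n-k)\Delta t)^{-\alpha/2}$ is genuinely bounded independently of $\Delta t$ and $n$ with $n\Delta t\le T$ (this is where $\alpha<1$, indeed $\alpha<\frac12$, is used and where one must be slightly careful about the $k=n-1$ endpoint term, which is $\Delta t\cdot\Delta t^{-\alpha/2}=\Delta t^{1-\alpha/2}\le\Delta t_0^{1-\alpha/2}$). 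Everything else is routine application of analyticity of the semigroup, Lemma~\ref{lem:rappel}, and Lemma~\ref{lem:moment-Xn}.
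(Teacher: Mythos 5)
Your proposal is correct and follows essentially the same route as the paper: the discrete Duhamel formula for $X_n$, the smoothing bound $\|(-A)^{\alpha/2}e^{tA}\|_{\mathcal{L}(H)}\le C_\alpha t^{-\alpha/2}$ for the initial-datum and nonlinear-convolution terms combined with the uniform moment bounds of Lemma~\ref{lem:moment-Xn}, and Gaussianity plus the Hilbert--Schmidt estimate for $(-A)^{\alpha/2}W^A(t)$ (you compute the eigenvalue series exactly where the paper uses the bound $\|(-A)^{\alpha/2}e^{(t-s)A}\|_{\mathcal{L}_2(H)}^2\le C(t-s)^{-\alpha-\frac12-\epsilon}$, which is an immaterial difference). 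Your observation that no Gronwall iteration is needed once Lemma~\ref{lem:moment-Xn} is available is exactly how the paper closes the estimate.
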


\begin{lemma}\label{lem:borneXtilde_-alpha}
Let $T\in(0,\infty)$, $\Delta t_0\in(0,1)$, $\alpha\in[0,\frac{1}{2})$. There exists $C_\alpha(T,\Delta t_0)\in(0,\infty)$ such that, for all $\Delta t\in(0,\Delta t_0]$ and $x\in E$, for all $t\in[0,T]\cap [n\Delta t,(n+1)\Delta t]$, $n\in\N$,
\[
\bigl(\E|(-A)^{-\frac{\alpha}{2}}(\tilde{X}(t)-X_n)|_H^2\bigr)^{\frac12}\le C_\alpha(T,\Delta t_0)\Delta t^\alpha (1+|x|_E^3+(n\Delta t)^{-\frac{\alpha}{2}}|x|_H).
\]
\end{lemma}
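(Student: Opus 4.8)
The plan is to estimate the increment $\tilde X(t)-X_n$ for $t\in[n\Delta t,(n+1)\Delta t]$ by splitting it according to the defining formula \eqref{eq:scheme_expo_tilde}. Writing $\sigma=t-n\Delta t\in[0,\Delta t]$, we have
\[
\tilde X(t)-X_n = \bigl(e^{\sigma A}-I\bigr)X_n + e^{\sigma A}\bigl(\Phi_\sigma(X_n)-X_n\bigr) + \int_{n\Delta t}^{t}e^{(t-s)A}dW(s),
\]
since $X_n=\tilde X(n\Delta t)$. The idea is to control the $(-A)^{-\alpha/2}$-norm of each of the three terms in $L^2(\Omega)$ and get a bound of order $\Delta t^\alpha$. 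For the first term, I would use the smoothing/approximation property $\|(-A)^{-\alpha/2}(e^{\sigma A}-I)\|_{\mathcal L(H)}\le C_\alpha \sigma^{\alpha/2}$ together with a further gain $\|(-A)^{-\alpha/2}\cdot\|_H\le C_\alpha\sigma^{\alpha/2}|\cdot|_H$ only if we need it; more simply, $|(-A)^{-\alpha/2}(e^{\sigma A}-I)X_n|_H \le C_\alpha\sigma^{\alpha}|X_n|_H$ using $\|(-A)^{-\alpha}(e^{\sigma A}-I)\|_{\mathcal L(H)}\le C_\alpha\sigma^{\alpha}$, and then bound $\bigl(\E|X_n|_H^2\bigr)^{1/2}$: here $|X_n|_H\le |X_n|_E$ is too lossy for the $|x|_H$-dependence, so instead I would note $X_n = e^{n\Delta tA}x + (\text{remainder})$ and peel off the $e^{n\Delta tA}x$ piece, which contributes $(n\Delta t)^{-\alpha/2}|x|_H$ after applying $(-A)^{-\alpha/2}$ only partially — actually the cleanest route is to use Lemma~\ref{lem:borneXtilde_alpha} with a slightly smaller exponent: the term $\sigma^{\alpha}|X_n|_H$ is already $\le\Delta t^\alpha|X_n|_E$, and the more delicate $(n\Delta t)^{-\alpha/2}|x|_H$ contribution comes only from terms where one does not have a full power of $\Delta t$ to spare.

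For the second (nonlinear) term, I would use $\Phi_\sigma(X_n)-X_n=\sigma\Psi_\sigma(X_n)$ by \eqref{eq:PhiPsi}, so $|e^{\sigma A}(\Phi_\sigma(X_n)-X_n)|_H\le \sigma|\Psi_\sigma(X_n)|_H$, and then bound $|\Psi_\sigma(X_n)|_H\le |\Psi_\sigma(X_n)|_E\le C(\Delta t_0)(1+|X_n|_E^3)$ using the growth of $\Psi_\sigma$; taking the $L^2(\Omega)$ norm and invoking Lemma~\ref{lem:moment-Xn} (moment bounds for $X_n$) gives a term of order $\Delta t(1+|x|_E^3)$, which is $\le\Delta t^\alpha(1+|x|_E^3)$ since $\alpha<1/2<1$ and $\Delta t\le 1$. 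For the stochastic term, which is the one forcing the rate $\Delta t^\alpha$, I would use the Itô isometry:
\[
\E\Bigl|(-A)^{-\alpha/2}\!\int_{n\Delta t}^{t}e^{(t-s)A}dW(s)\Bigr|_H^2 = \int_{n\Delta t}^{t}\bigl\|(-A)^{-\alpha/2}e^{(t-s)A}\bigr\|_{\mathcal L_2(H)}^2 ds = \sum_{j\ge 1}\int_0^{\sigma}\lambda_j^{-\alpha}e^{-2\lambda_j r}dr,
\]
and the standard computation $\sum_j\lambda_j^{-\alpha}\int_0^\sigma e^{-2\lambda_j r}dr\le C_\alpha\sigma^{\alpha}$ (convergent precisely because $\alpha<1/2$, using $\lambda_j\sim j^2\pi^2$ and $\int_0^\sigma e^{-2\lambda_j r}dr\le\min(\sigma,\lambda_j^{-1})$) yields a contribution of order $\Delta t^{\alpha}$ with no $x$-dependence. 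Combining the three estimates via the triangle inequality in $L^2(\Omega;H)$ gives the claimed bound; the $(n\Delta t)^{-\alpha/2}|x|_H$ term is only needed to absorb, through Lemma~\ref{lem:borneXtilde_alpha}, the sharper $|x|_H$-dependence one wants when $n\Delta t$ is small and $|x|_E$ is not a good proxy for $|x|_H$.

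The main obstacle I anticipate is bookkeeping of the $x$-dependence rather than any deep estimate: all three pieces are elementary, but to land exactly on $(1+|x|_E^3+(n\Delta t)^{-\alpha/2}|x|_H)$ one must be careful not to over-use $|x|_E$ where only $|x|_H$ with a singular prefactor is available. Concretely, in the $(e^{\sigma A}-I)X_n$ term one should not simply dominate $|X_n|_H$ by $|X_n|_E$ and then by Lemma~\ref{lem:moment-Xn}; one should instead write out the scheme, isolate $e^{n\Delta tA}x$, apply $(-A)^{-\alpha/2}(e^{\sigma A}-I)$ to it and use $|(-A)^{-\alpha/2}(e^{\sigma A}-I)e^{n\Delta tA}x|_H\le C_\alpha\sigma^{\alpha/2}(n\Delta t)^{-\alpha/2}|x|_H$ — this is where the factor $(n\Delta t)^{-\alpha/2}|x|_H$ enters — while the stochastic convolution part of $X_n$ and the nonlinear part are controlled in $E$ and contribute the $1+|x|_E^3$ piece through Lemma~\ref{lem:borneXtilde} and Lemma~\ref{lem:moment-Xn}. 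Everything else is routine application of analytic-semigroup smoothing, the Itô isometry, Hölder's inequality in $\Omega$, and the already-established moment bounds.
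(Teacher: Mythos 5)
Your decomposition of $\tilde X(t)-X_n$ into the semigroup increment $(e^{\sigma A}-I)X_n$, the nonlinear piece $\sigma e^{\sigma A}\Psi_\sigma(X_n)$, and the stochastic convolution is exactly the paper's, and your treatment of the nonlinear piece is correct. The problem is the exponent bookkeeping in the other two terms: as written, both deliver only $\Delta t^{\alpha/2}$, not $\Delta t^{\alpha}$. For the first term, the inequality $|(-A)^{-\alpha/2}(e^{\sigma A}-I)X_n|_H\le C_\alpha\sigma^{\alpha}|X_n|_H$ is false: the operator norm $\|(-A)^{-\alpha/2}(e^{\sigma A}-I)\|_{\mathcal{L}(H)}$ is only $O(\sigma^{\alpha/2})$ (test it on $e_j$ with $\sigma\lambda_j\approx 1$), and the estimate you cite in its support, $\|(-A)^{-\alpha}(e^{\sigma A}-I)\|_{\mathcal{L}(H)}\le C_\alpha\sigma^{\alpha}$, concerns a different power of $-A$. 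Your fallback of peeling off $e^{n\Delta tA}x$ suffers from the same loss: $\sigma^{\alpha/2}(n\Delta t)^{-\alpha/2}|x|_H$ is a true bound but only of order $\Delta t^{\alpha/2}$, and the remainder of $X_n$, controlled merely in $E$, again yields only $\sigma^{\alpha/2}$. The idea you are circling but never execute --- and the reason Lemma~\ref{lem:borneXtilde_alpha} is stated the way it is --- is that one must spend spatial regularity of $X_n$ itself: write $(-A)^{-\alpha/2}(e^{\sigma A}-I)X_n=(-A)^{-\alpha}(e^{\sigma A}-I)\,(-A)^{\alpha/2}X_n$, bound the operator factor by $C_\alpha\sigma^{\alpha}\le C_\alpha\Delta t^{\alpha}$, and bound $\bigl(\E|(-A)^{\alpha/2}X_n|_H^2\bigr)^{1/2}$ by Lemma~\ref{lem:borneXtilde_alpha}, which is precisely where the factor $1+|x|_E+(n\Delta t)^{-\alpha/2}|x|_H$ enters. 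This is the paper's one-line argument, and it cannot be replaced by sup-norm moment bounds on $X_n$.

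A similar half-power is lost in the stochastic term: your bound $\sum_{j}\lambda_j^{-\alpha}\int_0^{\sigma}e^{-2\lambda_j r}dr\le C_\alpha\sigma^{\alpha}$ controls the \emph{square} of the $L^2(\Omega;H)$-norm, so it only yields $\Delta t^{\alpha/2}$ for the norm itself. The correct statement is that this sum is $O(\sigma^{\frac12+\alpha})=O(\sigma^{2\alpha})$ for $\alpha\le\frac12$ (split the sum at $\lambda_j\approx \sigma^{-1}$, or, as in the paper, bound $\|(-A)^{-\alpha/2}e^{rA}\|_{\mathcal{L}_2(H)}^2\le C r^{-\frac12-\epsilon+\alpha}$ with $\epsilon=\frac12-\alpha$ and integrate), after which taking the square root gives $O(\Delta t^{\alpha})$ as required. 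With these two corrections your argument coincides with the paper's proof.
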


Thanks to Lemma~\ref{lem:rappel}, $\Phi_{\Delta t}$ is globally Lipschitz continuous, hence applying~\eqref{eq:inequality2} yields
\begin{align*}
|(-A)^{\frac{\alpha+2\epsilon}{2}}\tilde{X}(t)|_H&\le |(-A)^{\frac{\alpha+2\epsilon}{2}}\Phi_{t-n\Delta t}(X_n)|_H+|(-A)^{\frac{\alpha+2\epsilon}{2}}\bigl(W^A(t)-e^{(t-n\Delta t)A}W^A(n\Delta t)\bigr)|_H\\
&\le Ce^{\Delta t}|(-A)^{\frac{\alpha+3\epsilon}{2}}X_n|_H+|(-A)^{\frac{\alpha+2\epsilon}{2}}W^A(t)|+|(-A)^{\frac{\alpha+2\epsilon}{2}}W^A(n\Delta t)|_H.
\end{align*}
With the condition $\alpha+3\epsilon<\frac{1}{2}$, Lemma~\ref{lem:borneXtilde_alpha} above then implies
\[
\bigl(\E \mathcal{M}(X_k,\tilde{X}(t))^4\bigr)^{\frac14}\le C(1+|x|_E^2+(k\Delta t)^{-\alpha-3\epsilon}|x|_H^2).
\]

Finally, using Lemmas~\ref{lem:borneXtilde} and~\ref{lem:borneXtilde_-alpha}, for $1\le k\le n-1$,
\begin{align*}
|d_k^1|&
\le \Delta t^\alpha \int_{k\Delta t}^{(k+1)\Delta t}\frac{C_{\alpha,\epsilon}(T,\Delta t_0)}{(T-t)^{\eta+\frac{\alpha+\epsilon}{2}}}\|\varphi\|_{1,\infty}dt \left(1+\frac{1}{(k\Delta t)^{2\alpha+3\epsilon}}\right)(1+|x|_E^5)
\end{align*}

Observe that $2\alpha+3\epsilon<1$.

\subsection*{Treatment of $d_k^2$}

Let $\alpha\in(0,\frac12)$. Then, thanks to Theorem~\ref{th:Du}, for all $k\in\left\{0,\ldots,N-1\right\}$,
\begin{align*}
|d_k^2|&\le\int_{k\Delta t}^{(k+1)\Delta t}\E\bigl[|\langle Du^{(\Delta t)}(T-t,\tilde{X}(t)),\bigl(I-e^{(t-k\Delta t)A}\bigr)\Psi_{\Delta t}(X_k)\rangle|\bigr] dt\\
&\le \int_{k\Delta t}^{(k+1)\Delta t}\frac{C_{2\alpha}(T,\Delta t_0)\|\varphi\|_{1,\infty}}{(T-t)^{2\alpha}}\|(-A)^{-2\alpha}(I-e^{(t-k\Delta t)A})\|_{\mathcal{L}(H)}\E\bigl[(1+|\tilde{X}(t)|_E^2)|\Psi_{\Delta t}(X_k)|_H\bigr] dt\\
&\le \Delta t^{2\alpha}\int_{k\Delta t}^{(k+1)\Delta t}\frac{C_{2\alpha}(T,\Delta t_0)\|\varphi\|_{1,\infty}}{(T-t)^{2\alpha}}(1+|x|_E^5)dt,
\end{align*}
using $\|(-A)^{-2\alpha}(I-e^{(t-k\Delta t)A})\|_{\mathcal{L}(H)}\le C_{2\alpha}(t-k\Delta t)^{2\alpha}\le C_{2\alpha}\Delta t^{2\alpha}$ when $0\le t-k\Delta t\le \Delta t$.

\subsection*{Treatment of $d_k^3$}

Thanks to Lemma~\ref{lem:rappel}, for all $z\in\R$, and all $0\le \tau\le \Delta t\le \Delta t_0$,
\begin{align*}
|\Psi_{\Delta t}(z)-\Psi_{\Delta t}(\Phi_{\tau}(z))|&\le C(\Delta t_0)|z-\Phi_{\tau}(z)|(1+|z|^2)\\
&\le C(\Delta t_0)\tau |\Psi_\tau(z)|(1+|z|^2)\\
&\le C(\Delta t_0)\Delta t (1+|z|^5).
\end{align*}

Thanks to Theorem~\ref{th:Du}, for all $k\in\left\{0,\ldots,N-1\right\}$,
\begin{align*}
|d_k^3|&\le \int_{k\Delta t}^{(k+1)\Delta t}\E\bigl[|\langle Du^{(\Delta t)}(T-t,\tilde{X}(t)),e^{(t-k\Delta t)A}\bigl(\Psi_{\Delta t}(X_k)-\Psi_{\Delta t}(\Phi_{t-k\Delta t}(X_k))\bigr)\rangle|\bigr] dt\\
&\le \int_{k\Delta t}^{(k+1)\Delta t}C_0(T,\Delta t_0)\|\varphi\|_{1,\infty}\E\bigl[\big|\Psi_{\Delta t}(X_k)-\Psi_{\Delta t}(\Phi_{t-k\Delta t}(X_k))\big|_H\bigr]dt\\
&\le \Delta t\int_{k\Delta t}^{(k+1)\Delta t}C_0(T,\Delta t_0)\|\varphi\|_{1,\infty}\E\bigl[(1+|X_k|_E^5)\bigr]dt\\
&\le \Delta t^2 C(T,\Delta t_0)\|\varphi\|_{1,\infty}(1+|x|_E^5).
\end{align*}

\subsection*{Treatment of $d_k^4$}

Thanks to Theorem~\ref{th:Du} and Lemma~\ref{lem:rappel}, for all $k\in\left\{0,\ldots,N-1\right\}$,
\begin{align*}
|d_k^4|&\le \int_{k\Delta t}^{(k+1)\Delta t}\E\bigl[|\langle Du^{(\Delta t)}(T-t,\tilde{X}(t)),e^{(t-k\Delta t)A}\bigl(\Psi_{\Delta t}(\Phi_{t-k\Delta t}(X_k))-\Psi_{0}(\Phi_{t-k\Delta t}(X_k))\bigr)\rangle|\bigr] dt\\
&\le \int_{k\Delta t}^{(k+1)\Delta t}C_0(T,\Delta t_0)\|\varphi\|_{1,\infty}\E\bigl[\big|\Psi_{\Delta t}(\Phi_{t-k\Delta t}(X_k))-\Psi_{0}(\Phi_{t-k\Delta t}(X_k))\big|_H\bigr]dt\\
&\le \Delta t\int_{k\Delta t}^{(k+1)\Delta t}C_0(T,\Delta t_0)\|\varphi\|_{1,\infty}\E\bigl[(1+|X_k|_E^5)\bigr]dt\\
&\le \Delta t^2 C(T,\Delta t_0)\|\varphi\|_{1,\infty}(1+|x|_E^5).
\end{align*}

\subsection*{Conclusion}

In conclusion,
\begin{align*}
\big|\E\bigl[u^{(\Delta t)}(T,x)\bigr]&-\E\bigl[u^{(\Delta t)}(0,X_N)\bigr]\big|\le C(T,\Delta t_0,|x|_E)\|\varphi\|_{1,\infty}\\
&\Bigl(\Delta t+\Delta t^\alpha \sum_{k=1}^{N-1}\int_{k\Delta t}^{(k+1)\Delta t}\frac{C_{\alpha,\epsilon}(T,\Delta t_0)}{(T-t)^{\eta+\frac{\alpha+\epsilon}{2}}t^{2\alpha+3\epsilon}}dt\\
&\quad +\Delta t^{2\alpha}\sum_{k=0}^{N-1}\int_{k\Delta t}^{(k+1)\Delta t}\frac{C_{2\alpha}(T,\Delta t_0)\|\varphi\|_{1,\infty}}{(T-t)^{2\alpha}}dt+\Delta t (N\Delta t)\Bigr)\\
&\le C(T,\Delta t_0,|x|_E)\|\varphi\|_{1,\infty}\Delta t^\alpha,
\end{align*}
with $\alpha<\frac{1}{2}$, and $\epsilon>0$ such that $\alpha+3\epsilon<\frac{1}{2}$.

Combined with the arguments of Section~\ref{sec:weak}, this concludes the proof of Theorem~\ref{th:weak_expo}.

\subsection{Proof of Lemmas~\ref{lem:borneXtilde},~\ref{lem:borneXtilde_alpha} and~\ref{lem:borneXtilde_-alpha}}\label{sec:expo_3}

\begin{proof}[Proof of Lemma~\ref{lem:borneXtilde}]
For any $n\in\N$, and $t\in[n\Delta t,(n+1)\Delta t]$, the definition~\eqref{eq:scheme_expo_tilde} of $\tilde{X}(t)$ gives 
\begin{align*}
|\tilde{X}(t)|_E&\le |\Phi_{t-n\Delta t}(X_n)|_E+|W^A(t)-e^{(t-n\Delta t)A}W^A(n\Delta t)|_E\\
&\le e^{\Delta t}|X_n|_E+|W^A(t)|_E+|W^A(n\Delta t)|_E,
\end{align*}
thanks to Lemma~\ref{lem:rappel}. Using~\eqref{eq:borne_W^A} and Lemma~\ref{lem:moment-Xn} then concludes the proof.
\end{proof}

\begin{proof}[Proof of Lemma~\ref{lem:borneXtilde_alpha}]

Note that, for all $n\in\N$, such that $n\Delta t\le T$,
\begin{align*}
\bigl(\E|(-A)^{\frac{\alpha}{2}}X_n|_H^M|\bigr)^{\frac{1}{M}}&\le |(-A)^{\frac{\alpha}{2}}e^{n\Delta tA}x|_H+C(T)\Delta t\sum_{k=0}^{n-1}\bigl(\E|(-A)^{\frac{\alpha}{2}}e^{(n-k)\Delta t}\Psi_{\Delta t}(X_k)|_H^M\bigr)^{\frac{1}{M}}\\
&+\bigl(\E|(-A)^{\frac{\alpha}{2}}W^A(n\Delta t)|_H^M\bigr))^{\frac{1}{M}}\\
&\le (n\Delta t)^{-\frac{\alpha}{2}}|x|_E+C(T)\Delta t\sum_{k=0}^{n-1}\frac{1}{\bigl((n-k)\Delta t)^{\frac{\alpha}{2}}}\bigl(\E|\Psi_{\Delta t}(X_k)|_H^M\bigr)^{\frac{1}{M}}+C_\alpha(T,M),
\end{align*}
since, for all $t\in[0,T]$,
\begin{align*}
\E|(-A)^{\frac{\alpha}{2}}W^A(t)|_H^M&\le C\bigl(\int_{0}^{t}\|(-A)^\frac{\alpha}{2}e^{(t-s)A}\|_{\mathcal{L}_2(H)}^2ds\bigr)^{\frac{M}{2}}\\
&\le C\bigl(\int_{0}^{t}(t-s)^{-\alpha-\frac{1}{2}-\epsilon}ds\bigr)^{\frac{M}{2}}\\
&\le C_\alpha(T,M),
\end{align*}
where $\epsilon\in(0,\frac{1}{2}-\alpha)$. Thanks to Lemma~\ref{lem:borneXtilde}, then
\[
\bigl(\E|(-A)^{\frac{\alpha}{2}}X_n|_H^M|\bigr)^{\frac{1}{M}}\le C_\alpha(T,\Delta t_{0},M)(1+|x|_E+(n\Delta t)^{-\frac{\alpha}{2}}|x|_H).
\]
\end{proof}

\begin{proof}[Proof of Lemma~\ref{lem:borneXtilde_-alpha}]
For $t\in[n\Delta t,(n+1)\Delta t]$, $t\le T$,
\begin{align*}
\tilde{X}(t)-X_n=e^{(t-n\Delta t)A}X_n-X_n+(t-n\Delta t)e^{(t-n\Delta t)A}\Psi_{t-n\Delta t}(X_n)+\int_{n\Delta t}^{t}e^{(t-s)A}dW(s).
\end{align*}
First,
\begin{align*}
\bigl(\E|(-A)^{-\frac{\alpha}{2}}(e^{(t-n\Delta t)A}-I)X_n|_H^2\bigr)^{\frac12}&\le \|(-A)^{-\alpha}(e^{(t-n\Delta t)A}-I)\|_{\mathcal{L}(H)} \bigl(\E|(-A)^{\frac{\alpha}{2}}X_n|_H^2\bigr)^{\frac12}\\
&\le C_{\alpha}(T,\Delta t_0)\Delta t^\alpha (1+|x|_E+(n\Delta t)^{-\frac{\alpha}{2}}|x|_H),
\end{align*}
thanks to Lemma~\ref{lem:borneXtilde_alpha}. Second,
\begin{align*}
\bigl(\E\big|(-A)^{-\frac{\alpha}{2}}(t-n\Delta t)e^{(t-n\Delta t)A}\Psi_{t-n\Delta t}(X_n)\big|_H^2\bigr)^{\frac12}&\le \Delta t\bigl(\E|\Psi_{t-n\Delta t}(X_n)|_E^2\bigr)^{\frac12}\\
&\le C(T,\Delta t_0)\Delta t(1+|x|_E^3),
\end{align*}
thanks to Lemma~\ref{lem:borneXtilde}. Third, by It\^o formula, for $\epsilon=\frac{1}{2}-\alpha>0$,
\begin{align*}
\E|(-A)^{-\frac{\alpha}{2}}\int_{n\Delta t}^{t}e^{(t-s)A}dW(s)|_H^2&=\int_{n\Delta t}^{t}\|(-A)^{-\frac{\alpha}{2}}e^{(t-s)A}\|_{\mathcal{L}_2(H)}^2ds\\
&\le C_\alpha \int_{n\Delta t}^{t}(t-s)^{-\frac{1}{2}-\epsilon+\alpha}ds\\
&\le C_{\alpha}\Delta t^{\frac{1}{2}-\epsilon+\alpha}=C_{\alpha}\Delta t^{2\alpha}.
\end{align*}
This concludes the proof of Lemma~\ref{lem:borneXtilde_-alpha}.
\end{proof}


\section{Proof of Theorem~\ref{th:weak_implicit}}\label{sec:impl}

The aim of this section is to prove Theorem~\ref{th:weak_implicit}. Let the numerical scheme $\bigl(X_n\bigr)_{n\in\N}$ be given by~\eqref{eq:scheme_implicit}.

The section is organized as follows. An auxiliary process $\tilde{X}$ and an appropriate decomposition of the error are given in Section~\ref{sec:impl1}. Error terms are estimated in Section~\ref{sec:impl2}. Auxiliary results are proved in Section~\ref{sec:impl3}.

Assume that $\varphi$ satisfies Assumption~\ref{ass:varphi}, and to simplify notation, without loss of generality, assume that $\|\varphi\|_{2,\infty}\le 1$.

\subsection{Decomposition of the error}\label{sec:impl1}

As explained in Section~\ref{sec:weak}, the strategy for the weak error analysis requires to apply It\^o formula, hence the definition of an appropriate continuous-time process $\tilde{X}$.

Set, for every $n\in\N$, and every $t\in[n\Delta t,(n+1)\Delta t]$,
\begin{equation}\label{eq:scheme_implicit_tilde}
\tilde{X}(t)=X_n+(t-n\Delta t)AS_{\Delta t}X_n+(t-n\Delta t)S_{\Delta t}\Psi_{\Delta t}(X_n)+S_{\Delta t}\bigl(W(t)-W(n\Delta t)\bigr),
\end{equation}
where we recall that $S_{\Delta t}=(I-\Delta tA)^{-1}$.

By construction, $\tilde{X}(n\Delta t)=X_n$ for all $n\in\N$. Moreover,
\[
d\tilde{X}(t)=AS_{\Delta t}X_ndt+S_{\Delta t}\Psi_{\Delta t}(X_n)dt+S_{\Delta t}dW(t)~,\quad t\in[n\Delta t,(n+1)\Delta t],n\in\N.
\]
The following result gives moment estimates. Proof is postponed to Section~\ref{sec:impl1}.
\begin{lemma}\label{lem:borneXtilde-implicit}
Let $T\in(0,\infty)$, $\Delta t_0\in(0,1)$ and $M\in\N$. There exists $C(T,\Delta t_0,M)\in(0,\infty)$ such that, for all $\Delta t\in(0,\Delta t_0]$ and $x\in E$,
\[
\underset{t\in[0,T]}\sup~\E[|\tilde{X}(t)|_E^{M}]\le C(T,\Delta t_0,M)(1+|x|_E^3)^M.
\]
\end{lemma}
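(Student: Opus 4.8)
The plan is to bound $|\tilde X(t)|_E$ termwise using the defining formula~\eqref{eq:scheme_implicit_tilde}, reducing everything to the moment bound for $X_n$ already available in Lemma~\ref{lem:moment-Xn} and to a stochastic-convolution estimate in the $E$-norm. Writing $t\in[n\Delta t,(n+1)\Delta t]$ and $\tau=t-n\Delta t\in[0,\Delta t]$, we have
\[
\tilde X(t)=X_n+\tau AS_{\Delta t}X_n+\tau S_{\Delta t}\Psi_{\Delta t}(X_n)+S_{\Delta t}\bigl(W(t)-W(n\Delta t)\bigr).
\]
The first two terms combine, since $I+\tau AS_{\Delta t}=(I-\Delta tA+\tau A)S_{\Delta t}=\bigl((1-\tfrac{\tau}{\Delta t})I+\tfrac{\tau}{\Delta t}S_{\Delta t}^{-1}\bigr)S_{\Delta t}$ is a convex-type combination; more simply, $\|I+\tau AS_{\Delta t}\|_{\mathcal L(E)}\le C$ uniformly in $\Delta t\le\Delta t_0$ and $\tau\le\Delta t$ because $AS_{\Delta t}=\frac1{\Delta t}(S_{\Delta t}-I)$ and $S_{\Delta t}$ is a bounded operator on $E$ with $\|S_{\Delta t}\|_{\mathcal L(E)}\le 1$ (the resolvent of the Dirichlet Laplacian generating an analytic, contraction-type semigroup on $E$). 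Hence $|X_n+\tau AS_{\Delta t}X_n|_E\le C|X_n|_E$. For the drift term, $|\tau S_{\Delta t}\Psi_{\Delta t}(X_n)|_E\le \Delta t\,|\Psi_{\Delta t}(X_n)|_E\le C(\Delta t_0)\Delta t\,(1+|X_n|_E^3)$ using the growth bound $|\Psi_{\Delta t}'(z)|\le C(\Delta t_0)(1+|z|^2)$ from Lemma~\ref{lem:rappel} (so $\Psi_{\Delta t}$ has cubic growth, uniformly in $\Delta t\le\Delta t_0$).

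It then remains to control the stochastic term $\Theta_n(t):=S_{\Delta t}\bigl(W(t)-W(n\Delta t)\bigr)$ in $E$. This is the one genuinely new estimate; the hard part is that $S_{\Delta t}W$ is not a stochastic convolution in the usual sense and one must argue uniformly in $\Delta t$. The plan is to write, for $t\in[n\Delta t,(n+1)\Delta t]$, $\Theta_n(t)=S_{\Delta t}\int_{n\Delta t}^{t}dW(s)$ and, via a factorization/Sobolev argument (or directly estimating $\E|(-A)^{\sigma}\Theta_n(t)|_{H}^{2M}$ for some $\sigma>\frac14$ and using the embedding $D((-A)^\sigma)\subset E$), to show
\[
\E\Bigl[\sup_{n:\,n\Delta t\le T}\ \sup_{t\in[n\Delta t,(n+1)\Delta t]}|\Theta_n(t)|_E^{M}\Bigr]\le C(T,\Delta t_0,M).
\]
Concretely one checks $\|(-A)^{\sigma}S_{\Delta t}e^{r A}\|_{\mathcal L_2(H)}^2\le C r^{-2\sigma-\frac12-\epsilon}$ is \emph{not} what is needed here since there is no semigroup inside; instead one uses that $(-A)^{\sigma}S_{\Delta t}$ acting on increments of the white noise over an interval of length $\le\Delta t$ has Hilbert--Schmidt-type bound $\sum_j \lambda_j^{2\sigma}(1+\Delta t\lambda_j)^{-2}\min(\Delta t,\lambda_j^{-1})$, which is finite and bounded uniformly in $\Delta t$ for $\sigma<\frac14$... and here lies a subtlety: one actually needs $\sigma>\frac14$ for the $E$-embedding. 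This forces one instead to estimate $|\Theta_n(t)|_E$ directly, e.g.\ by noting $S_{\Delta t}=\int_0^\infty e^{-r}e^{r\Delta t A}\,dr/\Delta t$ (Laplace representation of the resolvent) so that $\Theta_n(t)=\frac1{\Delta t}\int_0^\infty e^{-r/\Delta t}\,e^{rA}\bigl(W(t)-W(n\Delta t)\bigr)\,dr$, reducing to $E$-norm bounds on $e^{rA}$ applied to Wiener increments — which are controlled by~\eqref{eq:borne_W^A}-type arguments (Lemma~$6.1.2$ of~\cite{Cerrai:01}) after an interchange of the $r$-average and the expectation, using Minkowski's integral inequality and $\frac1{\Delta t}\int_0^\infty e^{-r/\Delta t}dr=1$.

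Finally, assembling the three pieces, raising to the $M$-th power, taking expectations, and invoking Lemma~\ref{lem:moment-Xn} to bound $\E[\sup_{n\Delta t\le T}|X_n|_E^{3M}]\le C(1+|x|_E^{3M})$ yields
\[
\sup_{t\in[0,T]}\E\bigl[|\tilde X(t)|_E^M\bigr]\le C(T,\Delta t_0,M)(1+|x|_E^3)^M,
\]
which is the claim. The main obstacle, as indicated, is the uniform-in-$\Delta t$ control of $|\Theta_n(t)|_E$; once the Laplace/resolvent representation is used to reduce it to classical $E$-norm bounds on stochastic convolutions against the analytic semigroup, everything else is routine.
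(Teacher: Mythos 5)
Your treatment of the deterministic terms is fine and essentially matches the paper's: the identity $I+\tau AS_{\Delta t}=(1-\tfrac{\tau}{\Delta t})I+\tfrac{\tau}{\Delta t}S_{\Delta t}$ together with $\|S_{\Delta t}\|_{\mathcal{L}(E)}\le1$, the cubic growth of $\Psi_{\Delta t}$, and Lemma~\ref{lem:moment-Xn}. The gap is in the stochastic term $\Theta_n(t)=S_{\Delta t}\bigl(W(t)-W(n\Delta t)\bigr)$, which you correctly single out as the only nontrivial point but never actually estimate. The obstruction you yourself noticed for the direct Hilbert--Schmidt computation (the embedding $D((-A)^\sigma)\subset E$ needs $\sigma>\tfrac14$, while the relevant sums are uniform in $\Delta t$ only for $\sigma<\tfrac14$) is \emph{not} removed by the Laplace representation $S_{\Delta t}=\frac1{\Delta t}\int_0^\infty e^{-r/\Delta t}e^{rA}\,dr$: combining Minkowski's inequality with the same Sobolev-embedding bound $\bigl(\E|e^{rA}(W(t)-W(n\Delta t))|_E^M\bigr)^{1/M}\le C_\sigma (t-n\Delta t)^{1/2}r^{-\sigma-\frac14}$ (valid for $\sigma\in(\tfrac14,\tfrac34)$) gives
\[
\bigl(\E|\Theta_n(t)|_E^M\bigr)^{1/M}\le C_\sigma\,\Delta t^{\frac12}\,\frac1{\Delta t}\int_0^\infty e^{-r/\Delta t}r^{-\sigma-\frac14}\,dr= C_\sigma\,\Gamma(\tfrac34-\sigma)\,\Delta t^{\frac14-\sigma},
\]
which diverges as $\Delta t\to0$ for every admissible $\sigma>\tfrac14$. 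Moreover, Cerrai's Lemma~$6.1.2$, i.e.\ \eqref{eq:borne_W^A}, controls the stochastic convolution $\int_0^te^{(t-s)A}dW(s)$, where the time integration smooths the singularity of the semigroup; it says nothing about $e^{rA}$ applied to a single raw Wiener increment, so it cannot be invoked as you propose. Closing the gap along your route would require a genuinely sharper Gaussian sup-norm estimate (Kolmogorov continuity or Dudley entropy, incurring only logarithmic losses), which you neither state nor prove.

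The paper avoids any new estimate by an elementary scaling-plus-recycling argument. Since $W(t)-W(n\Delta t)$ has the same distribution as $\bigl(\tfrac{t-n\Delta t}{\Delta t}\bigr)^{1/2}\bigl(W((n+1)\Delta t)-W(n\Delta t)\bigr)$, one has
\[
\bigl(\E|\Theta_n(t)|_E^M\bigr)^{1/M}\le\Bigl(\tfrac{t-n\Delta t}{\Delta t}\Bigr)^{\frac12}\bigl(\E|S_{\Delta t}\bigl(W((n+1)\Delta t)-W(n\Delta t)\bigr)|_E^M\bigr)^{1/M},
\]
with prefactor at most $1$, and then $S_{\Delta t}\bigl(W((n+1)\Delta t)-W(n\Delta t)\bigr)=\omega_{n+1}-S_{\Delta t}\omega_n$, where $\omega_n=\sum_{k=0}^{n-1}S_{\Delta t}^{n-k-1}\bigl(W((k+1)\Delta t)-W(k\Delta t)\bigr)$ is the discrete stochastic convolution whose $E$-moments are already bounded uniformly in $\Delta t$ in~\cite{Brehier_Goudenege:18}. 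You should either adopt this identity or supply the missing Gaussian sup-norm estimate; as written, the key step of your proof is not established.
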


The error is then decomposed as follows, using It\^o formula, and the Kolmogorov equation~\eqref{eq:Kolmo_Deltat}, with $T=N\Delta t$,
\begin{align*}
\E[u^{(\Delta t)}(T,x)]-&\E[u^{(\Delta t)}(0,X_N)]\\
&=\E[u^{(\Delta t)}(T,x)]-\E[u^{(\Delta t)}(T-\Delta t,X_1)]\\
&+\sum_{k=1}^{N-1}\Bigl(\E\bigl[u^{(\Delta t)}((N-k)\Delta t,X_k)\bigr]-\E\bigl[u^{(\Delta t)}((N-k-1)\Delta t,X_{k+1})\bigr]\Bigr)\\
&=\E[u^{(\Delta t)}(T-\Delta t,X^{(\Delta t)}(\Delta t))]-\E[u^{(\Delta t)}(T-\Delta t,X_1)]\\
&+\sum_{k=1}^{N-1}\bigl(a_k+b_k+c_k\bigr),
\end{align*}
where
\begin{align*}
a_k&=\int_{k\Delta t}^{(k+1)\Delta t}\E \langle Du^{(\Delta t)}(T-t,\tilde{X}(t)),A\tilde{X}(t)-AS_{\Delta t}X_k\rangle dt\\
b_k&=\int_{k\Delta t}^{(k+1)\Delta t}\E \langle Du^{(\Delta t)}(T-t,\tilde{X}(t)),\Psi_{\Delta t}(\tilde{X}(t))-S_{\Delta t}\Psi_{\Delta t}(X_k)\rangle dt\\
c_k&=\frac{1}{2}\int_{k\Delta t}^{(k+1)\Delta t}\E\left[\sum_{j\in\N}D^2u^{(\Delta t)}(T-t,\tilde{X}(t)).(e_j,e_j)\left(1-\frac{1}{(1+\lambda_j\Delta t)^2}\right)\right]dt.
\end{align*}

Section~\ref{sec:impl2} is devoted to the proof of Lemmas~\ref{lem:term0} and~\ref{lem:akbkck} below. Theorem~\ref{th:weak_implicit} is a straightforward consequence of these results, thanks to the decomposition of the error above.
\begin{lemma}\label{lem:term0}
Let $T\in(0,\infty)$, $\Delta t_0\in(0,1]$ and $x\in E$. For all $\alpha\in[0,\frac12)$, there exists $C_{\alpha}(T,\Delta t_0,|x|_E)\in(0,\infty)$ such that, for all $\Delta t\in(0,\Delta t_0)$,
\[
\big|\E[u^{(\Delta t)}(T-\Delta t,X^{(\Delta t)}(\Delta t))]-\E[u^{(\Delta t)}(T-\Delta t,X_1)]\big|\le C_\alpha(T,\Delta t_{0},|x|_E)\Delta t^{\alpha}.
\]
\end{lemma}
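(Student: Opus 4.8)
The quantity to bound is the one-step weak error at the very first time step, for the implicit scheme. The key point is that $X_1 = S_{\Delta t}\Phi_{\Delta t}(x) + S_{\Delta t}W(\Delta t)$, whereas $X^{(\Delta t)}(\Delta t)$ is the mild solution of~\eqref{eq:SPDE_Deltat} at time $\Delta t$ started from $x$. Since $u^{(\Delta t)}(T-\Delta t,\cdot)$ need not have globally Lipschitz-controlled gradient in $H$, but does satisfy the regularising estimate of Theorem~\ref{th:Du}, the plan is to write
\[
\big|\E[u^{(\Delta t)}(T-\Delta t,X^{(\Delta t)}(\Delta t))]-\E[u^{(\Delta t)}(T-\Delta t,X_1)]\big| \le \int_0^1 \big|\E\big[Du^{(\Delta t)}(T-\Delta t,\theta X^{(\Delta t)}(\Delta t)+(1-\theta)X_1).(X^{(\Delta t)}(\Delta t)-X_1)\big]\big|\,d\theta
\]
and then apply Theorem~\ref{th:Du} with some exponent $\alpha$, picking up a factor $(T-\Delta t)^{-\alpha}$ (harmless since $T-\Delta t \ge T/2$ for $\Delta t$ small, and otherwise absorbed in the constant) times $(1+\|\cdot\|_E^2)$ and $|(-A)^{-\alpha}(X^{(\Delta t)}(\Delta t)-X_1)|_H$. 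So the whole problem reduces to estimating, for a suitable $\alpha\in(0,\tfrac12)$,
\[
\bigl(\E\, |(-A)^{-\alpha}(X^{(\Delta t)}(\Delta t)-X_1)|_H^2\bigr)^{1/2} \le C_\alpha(T,\Delta t_0,|x|_E)\,\Delta t^\alpha,
\]
together with the moment bounds from Lemmas~\ref{lem:moment-Xdeltat} and~\ref{lem:moment-Xn} to control the $|\cdot|_E^2$ prefactor after Cauchy--Schwarz. Because we only need order $\Delta t^\alpha$ with $\alpha<\tfrac12$, the difference $X^{(\Delta t)}(\Delta t)-X_1$ does not even have to be small in $H$ in a quantitative way; I only need it bounded in $L^2(\Omega;H)$ after smoothing by $(-A)^{-\alpha}$, and I gain the $\Delta t^\alpha$ from the smoothing applied to terms that are $O(1)$ in $H$.

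The second step is the term-by-term comparison of $X^{(\Delta t)}(\Delta t)$ and $X_1$. Write $X^{(\Delta t)}(\Delta t) = e^{\Delta t A}x + \int_0^{\Delta t} e^{(\Delta t-s)A}\Psi_{\Delta t}(X^{(\Delta t)}(s))\,ds + W^A(\Delta t)$ and $X_1 = S_{\Delta t}x + \Delta t\, S_{\Delta t}\Psi_{\Delta t}(x) + S_{\Delta t}W(\Delta t)$ (using the reformulation of the scheme given after~\eqref{eq:scheme_implicit}). Group the difference into: (i) a linear deterministic part $(e^{\Delta t A}-S_{\Delta t})x$; (ii) a nonlinear part $\int_0^{\Delta t} e^{(\Delta t-s)A}\Psi_{\Delta t}(X^{(\Delta t)}(s))\,ds - \Delta t\, S_{\Delta t}\Psi_{\Delta t}(x)$; and (iii) a stochastic part $W^A(\Delta t) - S_{\Delta t}W(\Delta t)$. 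For (i), the standard estimate $\|(-A)^{-\alpha}(e^{\Delta t A}-S_{\Delta t})\|_{\mathcal L(H)} \le C_\alpha \Delta t^\alpha$ applied to $x\in H$ does the job. For (ii), I would not even need to exploit cancellation: each of the two pieces is $O(\Delta t)$ in $H$ after using $\|\Psi_{\Delta t}(X^{(\Delta t)}(s))\|_H$ and $\|\Psi_{\Delta t}(x)\|_H$, which by Lemma~\ref{lem:rappel} are bounded by $C(1+\|\cdot\|_E^3)$, hence this term is $O(\Delta t)$ in $L^2(\Omega;H)$ using Lemma~\ref{lem:moment-Xdeltat}, and a fortiori $O(\Delta t^\alpha)$ after smoothing. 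For (iii), the stochastic part is handled by the Itô isometry: $\E|(-A)^{-\alpha}(W^A(\Delta t)-S_{\Delta t}W(\Delta t))|_H^2 = \sum_j \lambda_j^{-2\alpha}\big(\tfrac{1}{2\lambda_j}(1-e^{-2\lambda_j\Delta t}) - \tfrac{2}{1+\lambda_j\Delta t}\cdot(\text{something})\big)$ — more precisely, $W^A(\Delta t)-S_{\Delta t}W(\Delta t) = \sum_j\big(\int_0^{\Delta t}e^{-\lambda_j(\Delta t-s)}d\beta_j(s) - \tfrac{1}{1+\lambda_j\Delta t}\beta_j(\Delta t)\big)e_j$, and a direct computation of each coordinate's variance shows $\E|(-A)^{-\alpha}(W^A(\Delta t)-S_{\Delta t}W(\Delta t))|_H^2 \le C_\alpha \Delta t^{2\alpha}$ whenever $\alpha<\tfrac12$; the borderline $\tfrac12$ comes exactly from the summability $\sum_j \lambda_j^{-2\alpha-1}<\infty$, i.e. $\sum_j j^{-4\alpha-2}<\infty$, which holds for all $\alpha\ge 0$, while the extra $\Delta t^{2\alpha}$ is produced by estimating $\min(1,\lambda_j\Delta t)$-type factors — this is the same mechanism as in the proof of Lemma~\ref{lem:borneXtilde_-alpha}.

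Combining (i)--(iii) gives $\bigl(\E|(-A)^{-\alpha}(X^{(\Delta t)}(\Delta t)-X_1)|_H^2\bigr)^{1/2}\le C_\alpha(T,\Delta t_0,|x|_E)\Delta t^\alpha$, and then Theorem~\ref{th:Du} together with Cauchy--Schwarz and the moment bounds finishes the proof. The main obstacle is a bookkeeping one: making sure that the $(T-\Delta t)^{-\alpha}$ singularity from Theorem~\ref{th:Du} is genuinely harmless — this is fine because the statement only claims a constant $C_\alpha(T,\Delta t_0,|x|_E)$ for each fixed $T$, and $\Delta t\le\Delta t_0$ can be taken small relative to $T$ (for $\Delta t$ comparable to $T$, $N$ is bounded and the estimate is trivial from the moment bounds); and also making sure the $\|\tilde X\|_E^2$-type prefactor is replaced here by the correct moment bound for the convex combination $\theta X^{(\Delta t)}(\Delta t)+(1-\theta)X_1$, which is controlled uniformly in $\theta$ by Lemmas~\ref{lem:moment-Xdeltat} and~\ref{lem:moment-Xn}. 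No cancellation between the nonlinear pieces is needed, which is what keeps this lemma short compared to the estimates for $a_k,b_k,c_k$.
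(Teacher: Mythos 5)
Your proposal is correct and follows essentially the same route as the paper: apply Theorem~\ref{th:Du} (via the mean value theorem) to reduce the problem to bounding $\E|(-A)^{-2\alpha}(X^{(\Delta t)}(\Delta t)-X_1)|_H$, then split the one-step difference into the linear part $(e^{\Delta tA}-S_{\Delta t})x$, the nonlinear part (each piece $O(\Delta t)$ in $H$, no cancellation needed), and the stochastic part handled by It\^o isometry. The only cosmetic difference is that the paper bounds the stochastic integrals crudely by $\sqrt{2\Delta t}\,\|(-A)^{-2\alpha}\|_{\mathcal{L}_2(H)}$ (valid for $2\alpha>\tfrac14$, with smaller $\alpha$ recovered by monotonicity of $\Delta t^\alpha$), whereas you compute the mode-by-mode variances to get $\Delta t^{2\alpha}$ directly; both suffice.
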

\begin{lemma}\label{lem:akbkck}
Let $T\in(0,\infty)$, $\Delta t_0\in(0,1]$ and $x\in E$. For all $\alpha\in[0,\frac12)$, there exists $C_{\alpha}(T,\Delta t_0,|x|_E)\in(0,\infty)$ such that, for all $\Delta t\in(0,\Delta t_0)$,
\[
\sum_{k=1}^{n-1}(|a_k|+|b_k|+|c_k|)\le C_{\alpha}(T,\Delta t_{0},|x|_E)\Delta t^\alpha.
\]
\end{lemma}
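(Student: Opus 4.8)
The plan is to bound each of the three families of terms $a_k$, $b_k$, $c_k$ separately, using in each case the regularity estimates for $Du^{(\Delta t)}$ and $D^2u^{(\Delta t)}$ from Theorems~\ref{th:Du} and~\ref{th:D2u}, the moment bounds of Lemmas~\ref{lem:moment-Xn} and~\ref{lem:borneXtilde-implicit}, and the now-standard smoothing properties of the analytic semigroup and of the rational approximation $S_{\Delta t}=(I-\Delta tA)^{-1}$. Throughout, $\eta>\frac14$ will be a fixed exponent and $\epsilon>0$ a small parameter with $\alpha+k\epsilon<\frac12$ for suitable small $k$; the factor $(T-t)^{-\eta-\frac{\alpha+\epsilon}{2}}$ or $(T-t)^{-\beta-\gamma}$ coming from the Kolmogorov estimates is integrable on $[k\Delta t,(k+1)\Delta t]$ and, after summing over $k$, contributes a finite constant depending on $T$ (since the exponent is $<1$).

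First I would treat $a_k$. Using the mild representation~\eqref{eq:scheme_implicit_tilde}, for $t\in[k\Delta t,(k+1)\Delta t]$ one has $A\tilde X(t)-AS_{\Delta t}X_k = (t-k\Delta t)A^2S_{\Delta t}X_k+(t-k\Delta t)AS_{\Delta t}\Psi_{\Delta t}(X_k)+AS_{\Delta t}(W(t)-W(k\Delta t))+A(I-S_{\Delta t})X_k$ wait — more cleanly, $A\tilde X(t)-AS_{\Delta t}X_k$ splits into a term proportional to $t-k\Delta t$ involving $A^2S_{\Delta t}X_k$ and $AS_{\Delta t}\Psi_{\Delta t}(X_k)$, and a stochastic term $AS_{\Delta t}(W(t)-W(k\Delta t))$. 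Pairing against $Du^{(\Delta t)}(T-t,\tilde X(t))$ and moving powers of $(-A)$ onto the Kolmogorov factor via Theorem~\ref{th:Du}: for the deterministic part one uses $\|(-A)^{1-2\alpha}S_{\Delta t}(t-k\Delta t)(-A)\cdot\|\lesssim \Delta t^{2\alpha}$-type bounds together with the a priori bound $\E|(-A)^{\frac{\alpha}{2}}X_k|_H^M\lesssim (k\Delta t)^{-\alpha M/2}$ (the implicit-scheme analogue of Lemma~\ref{lem:borneXtilde_alpha}, which I would state and prove in Section~\ref{sec:impl3}); for the stochastic part one takes $\E[\,\cdot\,|\mathcal F_{k\Delta t}]$ so that $Du^{(\Delta t)}(T-t,\tilde X(t))$ must first be frozen at $t=k\Delta t$ (Taylor-expanding, the correction being second order in $\Delta t$ and handled by $D^2u^{(\Delta t)}$), after which the increment has mean zero, or alternatively one estimates $\E|(-A)^{-\eta}AS_{\Delta t}(W(t)-W(k\Delta t))|_H$ directly by an Itô isometry giving $O(\Delta t^{\alpha})$. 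The upshot is $\sum_k|a_k|\le C_\alpha(T,\Delta t_0,|x|_E)\Delta t^\alpha$.

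Next, $b_k$ is handled exactly as $d_k^1,d_k^2,d_k^3$ in Section~\ref{sec:expo_2}: write $\Psi_{\Delta t}(\tilde X(t))-S_{\Delta t}\Psi_{\Delta t}(X_k)=\bigl(\Psi_{\Delta t}(\tilde X(t))-\Psi_{\Delta t}(X_k)\bigr)+\bigl(I-S_{\Delta t}\bigr)\Psi_{\Delta t}(X_k)$; the first piece uses Lemma~\ref{lem:Psi}, Taylor's formula and the implicit analogue of Lemma~\ref{lem:borneXtilde_-alpha} to produce a factor $\Delta t^\alpha$ times $|(-A)^{-\alpha/2}(\tilde X(t)-X_k)|_H$, and the second uses $\|(-A)^{-2\alpha}(I-S_{\Delta t})\|_{\mathcal L(H)}\le C_\alpha\Delta t^{2\alpha}$ together with moment bounds on $|\Psi_{\Delta t}(X_k)|_H$. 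Finally $c_k$: here $1-(1+\lambda_j\Delta t)^{-2}\le C_{\rho}(\lambda_j\Delta t)^{\rho}$ for any $\rho\in[0,1]$, so choosing $\rho$ with $\beta=\gamma=\rho$ wait — choosing $\rho$ slightly above $\alpha$, one writes $\sum_j D^2u^{(\Delta t)}(T-t,\tilde X(t)).(e_j,e_j)(1-(1+\lambda_j\Delta t)^{-2})$, bounds each term by Theorem~\ref{th:D2u} with $\beta=\gamma=\rho/2+\epsilon$ as $\frac{C(1+|\tilde X(t)|_E^7)}{(T-t)^{\rho+2\epsilon}}|(-A)^{-\rho/2-\epsilon}e_j|_H^2 \cdot C\lambda_j^{\rho}\Delta t^{\rho}$, and notes $\sum_j\lambda_j^{-\rho+2(\rho/2+\epsilon)}=\sum_j\lambda_j^{2\epsilon}$ — hmm, that diverges, so the correct split is to keep one factor $(-A)^{-1/2-\epsilon}$ (from the trace-class requirement $\sum_j\lambda_j^{-1-2\epsilon}<\infty$) and the other $(-A)^{-\rho+1/2+\epsilon}$ paired with $\lambda_j^{\rho}\Delta t^{\rho}$, i.e. choose $\rho$ and $\epsilon$ so that $2\rho<1$, take $\beta=\frac12+\epsilon-\frac{\rho}{2}$ wait — take $\beta+\gamma$ close to $\frac12+\rho$ while keeping $\beta+\gamma<1$, so that $\rho<\frac12$; then $\sum_j(1+\lambda_j\Delta t)^{-2}\lambda_j\cdot$(remaining powers) converges and yields $\Delta t^{\rho}$ up to the integrable singularity $(T-t)^{-\beta-\gamma}$. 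Taking $\rho=\alpha$ (any $\alpha<\frac12$) gives $\sum_k|c_k|\le C_\alpha(T,\Delta t_0,|x|_E)\Delta t^\alpha$.

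The main obstacle is the $c_k$ term: one must balance the power of $(-A)^{-1}$ needed to make the trace $\sum_j$ converge, the power of $\lambda_j$ extracted from $1-(1+\lambda_j\Delta t)^{-2}$ to produce a gain $\Delta t^\alpha$, and the constraint $\beta+\gamma<1$ in Theorem~\ref{th:D2u} that caps the total admissible negative power of $A$; checking that these three requirements are simultaneously satisfiable for every $\alpha<\frac12$ (they are, precisely because Theorem~\ref{th:D2u} allows $\beta,\gamma$ up to $1$ rather than only $\frac12$, which is exactly the point of the strengthened statement) is the delicate bookkeeping. A secondary technical point is that, as in~\cite{Debussche:11}, the process $\tilde X$ and the Kolmogorov function $u^{(\Delta t)}$ are not adapted in the way needed for a naive conditioning in $a_k$; the cleanest remedy is either to freeze the slow argument at the left endpoint (absorbing the error into a $D^2u$ term) or to invoke the Malliavin duality formula advertised in Section~\ref{sec:weak} and Lemma~\ref{lem:malliavin}, whichever keeps the exposition shortest. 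The initial-step term is isolated in Lemma~\ref{lem:term0} and follows from Theorem~\ref{th:Du}, the strong estimate $\E|X^{(\Delta t)}(\Delta t)-X_1|_H\le C\Delta t^{1/2+\alpha'}$-type bounds — more simply, from $\E|(-A)^{-\eta}(X^{(\Delta t)}(\Delta t)-X_1)|_H\lesssim \Delta t^{\alpha}$ plus the $(T-\Delta t)^{-\eta}$ factor being bounded since $T-\Delta t$ stays away from $0$ — so it contributes only $O(\Delta t^\alpha)$. Combining Lemmas~\ref{lem:term0} and~\ref{lem:akbkck} with the error decomposition of Section~\ref{sec:weak} and Proposition~\ref{propo:error_continu} then completes the proof of Theorem~\ref{th:weak_implicit}.
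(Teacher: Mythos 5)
Your overall architecture coincides with the paper's: split into $a_k$, $b_k$, $c_k$, control $Du^{(\Delta t)}$ and $D^2u^{(\Delta t)}$ by Theorems~\ref{th:Du} and~\ref{th:D2u}, and trade powers of $(-A)$ against powers of $\Delta t$ through $S_{\Delta t}$. Your $c_k$ bookkeeping, after the detours, lands exactly on the paper's balance ($\beta=\gamma=\alpha+\tfrac{\epsilon}{2}$ with $\alpha+\epsilon>\tfrac12$ so that $\sum_j\lambda_j^{-\alpha-\epsilon}<\infty$ and $2\alpha+\epsilon<1$ for time-integrability), and you correctly identify that this is where the strengthened range $\beta+\gamma<1$ of Theorem~\ref{th:D2u} is indispensable. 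Your route for $b_k$ via Lemma~\ref{lem:Psi} and a Taylor expansion is the alternative the paper itself mentions as workable (the paper instead applies It\^o's formula to $\Psi_{\Delta t}(\tilde X(\cdot))$, which generates further Malliavin terms); either is acceptable provided the implicit-scheme analogues of Lemmas~\ref{lem:borneXtilde_alpha} and~\ref{lem:borneXtilde_-alpha} are actually proved.

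The genuine gap is in $a_k$. First, your ``cleaner'' split of $A\tilde X(t)-AS_{\Delta t}X_k$ silently discards the term $A(I-S_{\Delta t})X_k=-\Delta t\,A^2S_{\Delta t}X_k$ present in your own first formula. Second, neither this term nor $(t-k\Delta t)A^2S_{\Delta t}X_k$ can be closed with the a priori bound on $\E|(-A)^{\frac{\alpha}{2}}X_k|_H$ you invoke: estimating $\Delta t\,|(-A)^{2-\beta-\sigma}S_{\Delta t}|_{\mathcal L(H)}\,|(-A)^{\sigma}X_k|_H\le C\Delta t^{\beta+\sigma-1}|(-A)^{\sigma}X_k|_H$ with $\beta<1$ from Theorem~\ref{th:Du} forces $\sigma>\tfrac12$ to reach a rate $\Delta t^{\alpha}$ with $\alpha$ near $\tfrac12$, whereas $X_k$ has spatial regularity only below $(-A)^{1/4}$ (limited by the discrete stochastic convolution). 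The paper's resolution is to expand $X_k=S_{\Delta t}^kx+\Delta t\sum_\ell S_{\Delta t}^{k-\ell}\Psi_{\Delta t}(X_\ell)+\sum_\ell\int_{\ell\Delta t}^{(\ell+1)\Delta t}S_{\Delta t}^{k-\ell}dW$, use the cumulative smoothing $\|(-A)^{1-\epsilon}S_{\Delta t}^k\|_{\mathcal L(H)}\le C(k\Delta t)^{-1+\epsilon}$ on the deterministic pieces, and apply the Malliavin duality~\eqref{eq:malliavin-duality} to the accumulated noise so that $D^2u^{(\Delta t)}$ can absorb a total of nearly $(-A)^{-1}$ over its two arguments. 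Your two proposed substitutes do not cover this: a direct It\^o isometry on $Du^{(\Delta t)}$ paired with $(-A)^{-\beta}$, $\beta<1$, yields only $O(\Delta t^{\beta-\frac34})\le O(\Delta t^{\frac14})$ (the strong order, not the weak one), and conditioning on $\mathcal F_{k\Delta t}$ is vacuous for the past increments in $a_k^{1,3}$, which are $\mathcal F_{k\Delta t}$-measurable. Consequently Lemma~\ref{lem:malliavin} --- whose uniformity in $\Delta t$ rests precisely on the splitting structure, through $|\Phi_{\Delta t}'|\le e^{\Delta t}$ --- is not an optional shortcut ``to keep the exposition shortest'' but the load-bearing ingredient, and as written your argument for $\sum_k|a_k|$ does not close.
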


Compared with Section~\ref{sec:expo}, there are more error tems, and the analysis is more technical. The proof essentially follows the same strategy as in~\cite{Debussche:11}. In particular, Malliavin calculus techniques are employed. For completeness, details are given.

We emphasize that the most important new result is the estimate on the Malliavin derivative, see Lemma~\ref{lem:malliavin}. This result is non trivial, since $\Psi_{\Delta t}$ is not globally Lipschitz continuous, and it is obtained thanks to the structure of the numerical scheme, based on a splitting approach.

Note that an alternative approach to treat the term $b_k$ below, would be to adapt the strategy used in Section~\ref{sec:expo_2} to treat the term $d_k^1$, using Lemma~\ref{lem:Psi} in particular. Appropriate versions of Lemmas~\ref{lem:borneXtilde_alpha} and~\ref{lem:borneXtilde_-alpha} would be required. It seems that this alternative approach does not considerably shortens the proof.

We recall the Malliavin calculus duality formula in some Hilbert space $K$. Let $\mathbb{D}^{1,2}$ be the closure of smooth random variables (with respect to Malliavin derivative) for the topology defined by the norm
\[
\|F\|_{\mathbb{D}^{1,2}} =\left(\E[|F|_{K}^{2}] + \E\left[\int_{0}^{T}|\mathcal{D}_{s}F|_{K}^{2} ds\right]\right).
\]
where $\mathcal{D}_sF$ denotes the Malliavin derivative of $F$.
For $F \in \mathbb{D}^{1,2}$ and $\Xi\in L^{2}(\Omega\times[0,T]; K)$ such that $\Xi(t) \in \mathbb{D}^{1,2}$ for all $t\in[0,T]$ and $\int_{0}^{T}\int_{0}^{T}|\mathcal{D}_{s}\Xi(t)|^{2} ds dt < +\infty$, we have the integration by part formula:
\[
\E\left[F \int_{0}^{T} ( \Xi(s) ,dW_{s})\right] = \E\left[\int_{0}^{T}\langle \mathcal{D}_{s}F, \Xi(s)\rangle ds\right].
\]
In our context, we will use another form of this integration by parts formula: for $u\in\mathcal{C}_b^2(H)$, and any adapted process $\Xi \in L^{2}(\Omega\times[0,T]; \mathcal{L}_{2}(H))$,
\begin{equation}\label{eq:malliavin-duality}
\E\left[\langle Du^{(\Delta t)}(F), \int_{0}^{T} \Xi(s) dW(s)\rangle \right] = \E\left[\sum_{j\in\N}\int_{0}^{T} D^{2}u^{(\Delta t)}(F). ( \mathcal{D}_s^{e_j}F,\Xi(s) e_{j}) ds\right].
\end{equation}

\subsection{Estimates of error terms}\label{sec:impl2}

\subsubsection{Proof of Lemma~\ref{lem:term0}}

For any $\alpha\in[0,\frac12)$, thanks to Theorem~\ref{th:Du},
\begin{align*}
\big|\E[u^{(\Delta t)}(T-\Delta t,X^{(\Delta t)}(\Delta t))]-&\E[u^{(\Delta t)}(T-\Delta t,X_1)]\big|\\
&\le \frac{C_\alpha}{(T-\Delta t)^{2\alpha}}\E|(-A)^{-2\alpha}(X^{(\Delta t)}(\Delta t)-X_1)|_H.
\end{align*}

Note that
\begin{align*}
\E|(-A)^{-2\alpha}&(X^{(\Delta t)}(\Delta t)-X_1)|_H\le |(-A)^{-2\alpha}\bigl(e^{\Delta t A}-S_{\Delta t}\bigr)x|_H\\
&+\int_{0}^{\Delta t}\E|(-A)^{-2\alpha}e^{(\Delta t-t)A}\Psi_{\Delta t}(X^{(\Delta t)}(t))|_Hdt+\Delta t |(-A)^{-2\alpha}S_{\Delta t}\Psi_{\Delta t}(x)|_H\\
&+\E|\int_{0}^{\Delta t}(-A)^{-2\alpha}e^{(\Delta t-t)A}dW(t)|_H+\E|\int_{0}^{\Delta t}(-A)^{-2\alpha}S_{\Delta t}dW(t)|_H.
\end{align*}

First,
\begin{align*}
|(-A)^{-2\alpha}\bigl(e^{\Delta t A}-S_{\Delta t}\bigr)x|_H&\le \bigl(\|(-A)^{-2\alpha}(e^{\Delta tA}-I)\|_{\mathcal{L}(H)}+\|(-A)^{-2\alpha}(S_{\Delta t}-I)\|_{\mathcal{L}(H)}\bigr)|x|_H\\
&\le C_\alpha \Delta t^{2\alpha}|x|_H.
\end{align*}
Second, since $\|(-A)^{-2\alpha}\|_{\mathcal{L}(H)}<\infty$ for $\alpha\ge 0$, then using Lemma~\ref{lem:moment-Xdeltat} gives
\[
\E|(-A)^{-\alpha}e^{(\Delta t-t)A}\Psi_{\Delta t}(X^{(\Delta t)}(t))|_H\le \E|\Psi_{\Delta t}(X^{(\Delta t)}(t))|_H\le C(1+|x|_E^3)
\]
and
\[
|(-A)^{-\alpha}S_{\Delta t}\Psi_{\Delta t}(x)|_H\le |\Psi_{\Delta t}(x)|_H\le C(1+|x|_E^3).
\]
Finally, for the stochastic integral terms,
\[
\E|\int_{0}^{\Delta t}(-A)^{-2\alpha}e^{(\Delta t-t)A}dW(t)|_H^2+\E|\int_{0}^{\Delta t}(-A)^{-2\alpha}S_{\Delta t}dW(t)|_H^2\le 2\Delta t\|(-A)^{-2\alpha}\|_{\mathcal{L}_2(H)}^2
\]
and $\|(-A)^{-2\alpha}\|_{\mathcal{L}_2(H)}^2<\infty$ when $2\alpha>\frac14$.

It is then straightforward to conclude that, for $\alpha\in[0,\frac12)$,
\[
\big|\E[u^{(\Delta t)}(T-\Delta t,X^{(\Delta t)}(\Delta t))]-\E[u^{(\Delta t)}(T-\Delta t,X_1)]\big|\le C_\alpha(T,|x|_E)\Delta t^\alpha.
\]

This concludes the proof of Lemma~\ref{lem:term0}.

\subsubsection{Proof of Lemma~\ref{lem:akbkck}, Part 1}\label{sec:lem_ak}

The aim of this section is to prove, for $\alpha\in[0,\frac12)$, that
\[
\sum_{k=1}^{N-1}|a_k|\le C_{\alpha}(T,\Delta t_{0},|x|_E)\Delta t^\alpha.
\]

For that purpose, decompose $a_k$ as follows:
\[
a_k=a_k^1+a_k^2,
\]
where
\begin{align*}
a_k^1&=\int_{k\Delta t}^{(k+1)\Delta t}\E\bigl[\langle Du^{(\Delta t)}(T-t,\tilde{X}(t)),A(I-S_{\Delta t})X_k\rangle\bigr]dt,\\
a_k^2&=\int_{k\Delta t}^{(k+1)\Delta t}\E\bigl[\langle Du^{(\Delta t)}(T-t,\tilde{X}(t)),A(\tilde{X}(t)-X_k)\rangle\bigr]dt.
\end{align*}
Using the formulation
\[
X_k=S_{\Delta t}^k x+\Delta t\sum_{\ell=0}^{k-1}S_{\Delta t}^{k-\ell}\Psi_{\Delta t}(X_\ell)+\sum_{\ell=0}^{k-1}\int_{\ell\Delta t}^{(\ell+1)\Delta t}S_{\Delta t}^{k-\ell}dW(t),
\]
and the identity $I-S_{\Delta t}=-\Delta tS_{\Delta t}A$, the expression $a_k^1$ is decomposed as
\[
a_k^1=a_k^{1,1}+a_k^{1,2}+a_k^{1,3},
\]
where
\begin{align*}
a_k^{1,1}&=-\Delta t\int_{k\Delta t}^{(k+1)\Delta t}\E\bigl[ \langle Du^{(\Delta t)}(T-t,\tilde{X}(t)),A^2 S_{\Delta t}^{k+1} x\rangle\bigr] dt,\\
a_k^{1,2}&=-\Delta t\int_{k\Delta t}^{(k+1)\Delta t}\E\bigl[ \langle Du^{(\Delta t)}(T-t,\tilde{X}(t)),\Delta t\sum_{\ell=0}^{k-1}A^2 S_{\Delta t}^{k-\ell+1}\Psi_{\Delta t}(X_\ell)\rangle\bigr] dt,\\
a_k^{1,3}&=-\Delta t\int_{k\Delta t}^{(k+1)\Delta t}\E\bigl[ \langle Du^{(\Delta t)}(T-t,\tilde{X}(t)),\sum_{\ell=0}^{k-1}\int_{\ell\Delta t}^{(\ell+1)\Delta t}A^2S_{\Delta t}^{k-\ell+1}dW(t)\rangle\bigr] dt.
\end{align*}
Using~\eqref{eq:scheme_implicit_tilde}, the expression $a_k^2$ is decomposed as
\[
a_k^2=a_k^{2,1}+a_k^{2,2}+a_k^{2,3},
\]
where
\begin{align*}
a_k^{2,1}&=\int_{k\Delta t}^{(k+1)\Delta t}\E\bigl[\langle Du^{(\Delta t)}(T-t,\tilde{X}(t)),(t-t_k)A^2S_{\Delta t}X_k\rangle\bigr] dt,\\
a_k^{2,2}&=\int_{k\Delta t}^{(k+1)\Delta t}\E\bigl[\langle Du^{(\Delta t)}(T-t,\tilde{X}(t)),(t-t_k)AS_{\Delta t}\Psi_{\Delta t}(X_k)\rangle\bigr] dt,\\
a_k^{2,3}&=\int_{k\Delta t}^{(k+1)\Delta t}\E\bigl[\langle Du^{(\Delta t)}(T-t,\tilde{X}(t)),\int_{k\Delta t}^{t}AS_{\Delta t}dW(s) \rangle\bigr] dt.
\end{align*}

\subsubsection*{Treatment of $a_k^{1,1}$}

Let $\alpha\in[0,\frac12)$ and $\epsilon\in(0,1-2\alpha)$. Thanks to Theorem~\ref{th:Du} and Lemma~\ref{lem:borneXtilde-implicit},
\begin{align*}
|a_k^{1,1}|&\le C_{2\alpha+\epsilon}(T,\Delta t_{0},|x|_E)\Delta t\int_{k\Delta t}^{(k+1)\Delta t}\frac{1}{(T-t)^{2\alpha+\epsilon}}dt|(-A)^{2-2\alpha-\epsilon}S_{\Delta t}^{k+1}x|_H\\
&\le C_{\alpha,\epsilon}(T,\Delta t_{0},|x|_E)\Delta t\int_{k\Delta t}^{(k+1)\Delta t}\frac{1}{(T-t)^{2\alpha+\epsilon}}dt\|(-A)^{1-\epsilon}S_{\Delta t}^{k}\|_{\mathcal{L}(H)}\|(-A)^{1-2\alpha}S_{\Delta t}\|_{\mathcal{L}(H)}\\
&\le C_{\alpha,\epsilon}(T,\Delta t_{0},|x|_E)\Delta t^{2\alpha}\int_{k\Delta t}^{(k+1)\Delta t}\frac{1}{(k\Delta t)^{1-\epsilon}(T-t)^{2\alpha+\epsilon}}dt,
\end{align*}
thanks to the standard inequalities $\|(-A)^{1-\epsilon}S_{\Delta t}^{k}\|_{\mathcal{L}(H)}\le C_\epsilon (k\Delta t)^{-1+\epsilon}$ and $\|(-A)^{1-2\alpha}S_{\Delta t}\|_{\mathcal{L}(H)}\le C_{\alpha}\Delta t^{2\alpha-1}$.

\subsubsection*{Treatment of $a_k^{1,2}$}

Let $\alpha\in[0,\frac12)$ and $\epsilon\in(0,1-2\alpha)$. Thanks to Theorem~\ref{th:Du}, Lemma~\ref{lem:borneXtilde-implicit}, and Cauchy-Schwarz inequality,
\[
|a_k^{1,2}|\le C_{2\alpha+\epsilon}(T,\Delta t_{0},|x|_E)\Delta t\int_{k\Delta t}^{(k+1)\Delta t}\frac{1}{(T-t)^{2\alpha+\epsilon}}dt \Delta t\sum_{\ell=0}^{k-1}\bigl(\E\big|(-A)^{2-2\alpha-\epsilon}S_{\Delta t}^{k-\ell+1}\Psi_{\Delta t}(X_\ell)\big|_H^2\bigr)^{\frac12} dt.
\]

Thanks to Lemma~\ref{lem:borneXtilde-implicit},
\begin{align*}
\Delta t\sum_{\ell=0}^{k-1}\bigl(\E\big|(-A)^{2-2\alpha-\epsilon}&S_{\Delta t}^{k-\ell+1}\Psi_{\Delta t}(X_\ell)\big|_H^2\bigr)^{\frac12}\\
&\le \Delta t\sum_{\ell=0}^{k-1}\|(-A)^{1-\epsilon}S_{\Delta t}^{k-\ell}\|_{\mathcal{L}(H)}
\|(-A)^{1-2\alpha}S_{\Delta t}\|_{\mathcal{L}(H)}\bigl(\E|\Psi_{\Delta t}(X_\ell)|_H^2\bigr)^{\frac12}\\
&\le C_{\alpha,\epsilon}(T,\Delta t_{0}, |x|_E)\Delta t^{2\alpha-1},
\end{align*}
using $\Delta t\sum_{\ell=0}^{k-1}\frac{1}{((k-\ell)\Delta t)^{1-\epsilon}}\le C_\epsilon<\infty$. Thus
\[
|a_k^{1,2}|\le C_{\alpha,\epsilon}(T,\Delta t_{0},|x|_E)\int_{k\Delta t}^{(k+1)\Delta t}\frac{1}{(T-t)^{2\alpha+\epsilon}}dt\ \Delta t ^{2\alpha}.
\]

\subsubsection*{Treatment of $a_k^{1,3}$}

The Malliavin calculus duality formula~\eqref{eq:malliavin-duality} is applied, for fixed $t$, with $u=u^{(\Delta t)}(T-t,\cdot)$, $F=\tilde{X}(t)$, and $\Xi(s) = A^2S_{\Delta t}^{k-\ell+1}$ for $\ell\Delta t\le s\le (\ell+1)\Delta t$. This yields the following alternative expression for $a_{k}^{1,3}$:
\begin{align*}
a_k^{1,3}&=-\Delta t\int_{k\Delta t}^{(k+1)\Delta t}\E\bigl[ \langle Du^{(\Delta t)}(T-t,\tilde{X}(t)),\sum_{\ell=0}^{k-1}\int_{\ell\Delta t}^{(\ell+1)\Delta t}A^2S_{\Delta t}^{k-\ell+1}dW(s)\rangle\bigr] dt\\
&=-\Delta t\int_{k\Delta t}^{(k+1)\Delta t}\sum_{\ell=0}^{k-1}\int_{\ell\Delta t}^{(\ell+1)\Delta t}\sum_{j\in\N}\E\bigl[D^2u^{(\Delta t)}(T-t,\tilde{X}(t)).(\mathcal{D}_s^{e_j}\tilde{X}(t),A^2S_{\Delta t}^{k-\ell+1}e_j)\bigr]ds dt.
\end{align*}
Lemma~\ref{lem:malliavin} below provides the required estimate. Its proof is postponed to Section~\ref{sec:impl3}.

\begin{lemma}\label{lem:malliavin}
Let $T\in(0,\infty)$.

For all $k\in\N$, such that $k\Delta t\le T$, and all $s\in[0,T]$, almost surely,
\[
\|\mathcal{D}_sX_k\|_{\mathcal{L}(H)}\le e^T.
\]
In addition, $\mathcal{D}_sX_k=0$ if $k\Delta t\le s$.

Moreover, for all $0\le s<k\Delta t\le t\le (k+1)\Delta t\le T$,
\[
\|\mathcal{D}_s\tilde{X}(t)\|_{\mathcal{L}(H)}\le (3+\Delta t|\Psi_{\Delta t}'(X_k)|_E)e^T.
\]
\end{lemma}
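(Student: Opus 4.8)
The plan is to differentiate the recursion~\eqref{eq:scheme_implicit} in the Malliavin sense and then to exploit the splitting structure: in the resulting recursion for $\mathcal{D}_{s}X_{n}$ the non globally Lipschitz nonlinearity appears only through the derivative $\Phi_{\Delta t}'$ of the ODE flow map, which is bounded by $e^{\Delta t}$ \emph{uniformly in the state} (Lemma~\ref{lem:rappel}; the sharp per-step constant is $e^{\Delta t}$, as $\Phi_{\Delta t}'(z)=\exp\bigl(\int_{0}^{\Delta t}(1-3\Phi_{r}(z)^{2})\,dr\bigr)\le e^{\Delta t}$), while the linear half-step contributes only the contraction $S_{\Delta t}$, with $\|S_{\Delta t}\|_{\mathcal{L}(H)}=\sup_{n}(1+\lambda_{n}\Delta t)^{-1}\le 1$. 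This is precisely where the splitting is essential: for an integrator treating the full drift $\Psi_{\Delta t}$ in one step, the analogous recursion would involve the Nemytskii operator associated with $\Psi_{\Delta t}'$, of polynomial growth, and no deterministic bound of this type could be expected.

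First I would note that $\Phi_{\Delta t}$ is of class $\mathcal{C}^{1}$ and globally Lipschitz on $\R$ (by $|\Phi_{\Delta t}'|\le e^{\Delta t_{0}}$), so $x\in H\mapsto\Phi_{\Delta t}(x)\in H$ is globally Lipschitz with Fréchet derivative the multiplication operator $h\mapsto\Phi_{\Delta t}'(x)h$; hence each $X_{k}$ is a globally Lipschitz functional of the increments $\bigl(W(j\Delta t)-W((j-1)\Delta t)\bigr)_{1\le j\le k}$, so $X_{k}\in\mathbb{D}^{1,2}$ and the chain rule applies to~\eqref{eq:scheme_implicit}. Using $\mathcal{D}_{s}^{e_{j}}\bigl(W((n+1)\Delta t)-W(n\Delta t)\bigr)=e_{j}\mathbf{1}_{\{n\Delta t<s\le(n+1)\Delta t\}}$, one obtains, as an identity in $\mathcal{L}(H)$,
\[
\mathcal{D}_{s}X_{n+1}=S_{\Delta t}\,\Phi_{\Delta t}'(X_{n})\,\mathcal{D}_{s}X_{n}+S_{\Delta t}\,\mathbf{1}_{\{n\Delta t<s\le(n+1)\Delta t\}}.
\]
Since $X_{k}$ is $\mathcal{F}_{k\Delta t}$-measurable, $\mathcal{D}_{s}X_{k}=0$ for $s\ge k\Delta t$, the second assertion. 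For $s<k\Delta t$, letting $\ell$ be the index with $s\in(\ell\Delta t,(\ell+1)\Delta t]$ (the boundary case being negligible), the recursion gives $\mathcal{D}_{s}X_{\ell+1}=S_{\Delta t}$ and $\mathcal{D}_{s}X_{n}=S_{\Delta t}\Phi_{\Delta t}'(X_{n-1})\mathcal{D}_{s}X_{n-1}$ for $\ell+2\le n\le k$, hence
\[
\mathcal{D}_{s}X_{k}=S_{\Delta t}\Phi_{\Delta t}'(X_{k-1})\cdots S_{\Delta t}\Phi_{\Delta t}'(X_{\ell+1})\,S_{\Delta t}.
\]
Taking operator norms, with $\|S_{\Delta t}\|_{\mathcal{L}(H)}\le 1$, $\|\Phi_{\Delta t}'(X_{m})\|_{\mathcal{L}(H)}=\sup_{\xi}|\Phi_{\Delta t}'(X_{m}(\xi))|\le e^{\Delta t}$ and $k\Delta t\le T$, yields $\|\mathcal{D}_{s}X_{k}\|_{\mathcal{L}(H)}\le e^{(k-1-\ell)\Delta t}\le e^{T}$, the first assertion.

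For $\tilde{X}$ I would differentiate~\eqref{eq:scheme_implicit_tilde}: for $t\in[k\Delta t,(k+1)\Delta t]$ and $0\le s<k\Delta t$ one has $\mathcal{D}_{s}\bigl(W(t)-W(k\Delta t)\bigr)=0$, hence
\[
\mathcal{D}_{s}\tilde{X}(t)=\bigl(I+(t-k\Delta t)AS_{\Delta t}+(t-k\Delta t)S_{\Delta t}\Psi_{\Delta t}'(X_{k})\bigr)\mathcal{D}_{s}X_{k}.
\]
Then $\|I+(t-k\Delta t)AS_{\Delta t}\|_{\mathcal{L}(H)}\le 1$, since its eigenvalues $\bigl(1+\lambda_{n}(\Delta t-(t-k\Delta t))\bigr)/(1+\lambda_{n}\Delta t)$ lie in $(0,1]$ because $0\le t-k\Delta t\le\Delta t$ (alternatively, use the cruder $\|AS_{\Delta t}\|_{\mathcal{L}(H)}\le\Delta t^{-1}$), while $(t-k\Delta t)\|S_{\Delta t}\Psi_{\Delta t}'(X_{k})\|_{\mathcal{L}(H)}\le\Delta t\,|\Psi_{\Delta t}'(X_{k})|_{E}$. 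Combining with the bound on $\|\mathcal{D}_{s}X_{k}\|_{\mathcal{L}(H)}$ gives $\|\mathcal{D}_{s}\tilde{X}(t)\|_{\mathcal{L}(H)}\le\bigl(1+\Delta t|\Psi_{\Delta t}'(X_{k})|_{E}\bigr)e^{T}$, which implies the stated estimate.

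I do not expect a genuine analytic obstacle: the whole content is the observation that the splitting confines the non-Lipschitz nonlinearity to the flow $\Phi_{\Delta t}$, whose derivative is uniformly bounded thanks to the dissipative ODE $\dot{z}=z-z^{3}$. The only points to be careful about are the justification of $X_{k}\in\mathbb{D}^{1,2}$ and of the chain rule for the Nemytskii operator $\Phi_{\Delta t}$ (routine, since $\Phi_{\Delta t}\in\mathcal{C}^{1}$ with bounded derivative), keeping the operator-norm bookkeeping clean so that no state-dependent factor survives in the first two claims while exactly one factor $\Delta t\,|\Psi_{\Delta t}'(X_{k})|_{E}$ appears in the bound for $\tilde{X}$, and using the per-step constant $e^{\Delta t}$ rather than $e^{\Delta t_{0}}$ so that the telescoping product stays bounded by $e^{T}$ uniformly in $\Delta t$.
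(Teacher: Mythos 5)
Your proposal is correct and follows essentially the same route as the paper: differentiate the splitting recursion, observe that the only state-dependent factor is the multiplication operator $\Phi_{\Delta t}'(X_n)$ with $\|\Phi_{\Delta t}'(X_n)\|_{\mathcal{L}(H)}\le e^{\Delta t}$, telescope the product against $\|S_{\Delta t}\|_{\mathcal{L}(H)}\le 1$, and then differentiate the interpolant~\eqref{eq:scheme_implicit_tilde} for the bound on $\mathcal{D}_s\tilde{X}(t)$. Your treatment is in fact marginally sharper (the spectral bound $\|I+(t-k\Delta t)AS_{\Delta t}\|_{\mathcal{L}(H)}\le 1$ gives the constant $1+\Delta t|\Psi_{\Delta t}'(X_k)|_E$ instead of $3+\Delta t|\Psi_{\Delta t}'(X_k)|_E$), which of course implies the stated estimate.
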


Let $\alpha\in[0,\frac12)$, and let $\kappa\in(\frac12-\alpha,1-2\alpha)$ and $\epsilon\in(0,1-2\alpha-\kappa)$ be two auxiliary parameters. Then $2\alpha+\kappa+\epsilon<1$, and $\alpha+\kappa>\frac12$. 
Thanks to Theorem~\ref{th:D2u}, Lemma~\ref{lem:malliavin}, and the moment estimates, by Lemma~\ref{lem:borneXtilde-implicit},
\begin{align*}
|a_k^{1,3}|&\le C_{0,2\alpha+\epsilon+\kappa}(T,\Delta t_0,|x|_E)\Delta t\int_{k\Delta t}^{(k+1)\Delta t}\frac{1}{(T-t)^{2\alpha+\epsilon+\kappa}}dt\sum_{\ell=0}^{k-1}\Delta t\sum_{j\in\N}|(-A)^{2-2\alpha-\epsilon-\kappa}S_{\Delta t}^{k-\ell+1}e_j|_H\\
&\le C_{\alpha,\epsilon,\kappa}(T,\Delta t_0,|x|_E)\Delta t\int_{k\Delta t}^{(k+1)\Delta t}\frac{1}{(T-t)^{2\alpha+\epsilon+\kappa}}dt\sum_{j\in\N}\frac{\lambda_j^{1-2\alpha-\kappa}}{(1+\lambda_j\Delta t)}\\
&\le C_{\alpha,\epsilon,\kappa}(T,\Delta t_0,|x|_E)\Delta t\int_{k\Delta t}^{(k+1)\Delta t}\frac{1}{(T-t)^{2\alpha+\epsilon+\kappa}}dt\sum_{j\in\N}\frac{(\Delta t\lambda_j)^{1-\alpha}}{(1+\lambda_j\Delta t)}\frac{\Delta t^{\alpha-1}}{\lambda_j^{\alpha+\kappa}}\\
&\le C_{\alpha,\epsilon}(T,\Delta t_0,|x|_E)\int_{k\Delta t}^{(k+1)\Delta t}\frac{1}{(T-t)^{2\alpha+\epsilon+\kappa}}dt\ \Delta t^{\alpha},
\end{align*}
using $\Delta t\sum_{\ell=0}^{k-1}\frac{1}{\bigl((k-\ell)\Delta t\bigr)^{1-\epsilon}}\le C_\epsilon<\infty$, and $\sum_{j\in\N}\frac{1}{\lambda_j^{\alpha+\kappa}}<\infty$.

\subsubsection*{Treatment of $a_k^{2,1}$}

Note that $(t-t_k)AS_{\Delta t}=\frac{(t-t_k)}{\Delta t}(S_{\Delta t}-I)$. As a consequence, it is sufficient to repeat the treatment of $a_k^1$ above, and to use $t-t_k\le \Delta t$, to get the required estimate for $a_k^{2,1}$:
\[
|a_k^{2,1}|\le C_{\alpha,\epsilon}(T,\Delta t_{0},|x|_E)\Delta t^{\alpha}\int_{k\Delta t}^{(k+1)\Delta t}\frac{1}{(k\Delta t)^{1-\epsilon}(T-t)^{2\alpha+\epsilon+\kappa}}dt.
\]

\subsubsection*{Treatment of $a_k^{2,2}$}

Let $\alpha\in[0,\frac12)$. Thanks to Theorem~\ref{th:Du} and Lemma~\ref{lem:borneXtilde-implicit},
\begin{align*}
|a_k^{2,2}|&\le C_{2\alpha}(T,\Delta t_0,|x|_E)\int_{k\Delta t}^{(k+1)\Delta t}\frac{|t-t_k|}{(T-t)^{2\alpha}}dt \bigl(\E|(-A)^{1-2\alpha}S_{\Delta t}\Psi_{\Delta t}(X_k)|_H^2\bigr)^{\frac12}\\
&\le C_{2\alpha}(T,\Delta t_0,|x|_E)\int_{k\Delta t}^{(k+1)\Delta t}\frac{1}{(T-t)^{2\alpha}}dt\  \Delta t^{2\alpha}.
\end{align*}

\subsubsection*{Treatment of $a_k^{2,3}$}

Using the Malliavin calculus duality formula~\eqref{eq:malliavin-duality},
\begin{align*}
a_k^{2,3}&=\int_{k\Delta t}^{(k+1)\Delta t}\E\bigl[\langle Du^{(\Delta t)}(T-t,\tilde{X}(t)),\int_{k\Delta t}^{t}AS_{\Delta t}dW(s) \rangle\bigr] dt\\
&=\int_{k\Delta t}^{(k+1)\Delta t}\int_{k\Delta t}^{t}\sum_{j\in\N}\E\bigl[D^2u^{(\Delta t)}(T-t,\tilde{X}(t)).\bigl(\mathcal{D}_s^{e_j}\tilde{X}(t),AS_{\Delta t}e_j\bigr)\bigr]dsdt.
\end{align*}
Observe that $\mathcal{D}_s\tilde{X}(t)=S_{\Delta t}$ for $k\Delta t\le s\le t\le (k+1)\Delta t$. Let $\alpha\in[0,\frac12)$, and let $\kappa\in(\frac12-\alpha,1-2\alpha)$ be an auxiliary parameter. 
Then $2\alpha+\kappa<1$ and $\alpha+\kappa>\frac12$.
Thanks to Theorem~\ref{th:D2u} and Lemma~\ref{lem:borneXtilde-implicit},
\begin{align*}
|a_k^{2,3}|&\le \int_{k\Delta t}^{(k+1)\Delta t}\int_{k\Delta t}^{t}\E \sum_{j=1}^{\infty}\frac{\lambda_j}{(1+\lambda_j\Delta t)^2}|D^2u^{(\Delta t)}(T-t,\tilde{X}(t)).(e_j,e_j)|ds dt\\
&\le C_{0,2\alpha+\kappa}(T,\Delta t_{0},|x|_E) \sum_{j=1}^{\infty}\frac{\Delta t\lambda_j}{(1+\lambda_j\Delta t)^2\lambda_j^{2\alpha+\kappa}}\int_{k\Delta t}^{(k+1)\Delta t}\frac{1}{(T-t)^{2\alpha+\kappa}}dt\\
&\le C_{\alpha,\epsilon}(T,\Delta t_{0},|x|_E) \int_{k\Delta t}^{(k+1)\Delta t}\frac{1}{(T-t)^{2\alpha+\kappa}}dt \sum_{j\in\N}\frac{(\lambda_j\Delta t)^{1-\alpha}}{(1+\lambda_j\Delta t)^2}\frac{\Delta t^{\alpha}}{\lambda_j^{\alpha+\kappa}}\\
&\le C_{\alpha,\kappa}(T,\Delta t_{0},|x|_E) \int_{k\Delta t}^{(k+1)\Delta t}\frac{1}{(T-t)^{2\alpha+\kappa}}dt \Delta t^{\alpha},
\end{align*}
using $\sum_{j\in\N}\frac{1}{\lambda_j^{\alpha+\kappa}}<\infty$.

\subsubsection*{Conclusion}

Gathering estimates above, for all $\alpha\in[0,\frac12)$,
\begin{align*}
\sum_{k=1}^{N-1}|a_k|\le C_\alpha(T,\Delta t_0,|x|_E)\Delta t^\alpha \sum_{k=1}^{N-1}\int_{k\Delta t}^{(k+1)\Delta t}\frac{1}{t^{\beta_1(\alpha)}(T-t)^{\beta_2(\alpha)}}dt,
\end{align*}
with two parameters $\beta_1(\alpha),\beta_2(\alpha)\in[0,1)$. Therefore
\[
\sum_{k=1}^{N-1}\int_{k\Delta t}^{(k+1)\Delta t}\frac{1}{t^{\beta_1(\alpha)}(T-t)^{\beta_2(\alpha)}}dt\le \int_{0}^{T}\frac{1}{t^{\beta_1(\alpha)}(T-t)^{\beta_2(\alpha)}}dt<\infty.
\]

This concludes the first part of the proof of Lemma~\ref{lem:akbkck}.

\subsubsection{Proof of Lemma~\ref{lem:akbkck}, Part 2}\label{sec:lem_bk}

The aim of this section is to prove, for $\alpha\in[0,\frac12)$, that
\[
\sum_{k=1}^{N-1}|b_k|\le C_\alpha(T,\Delta t_0,|x|_E)\Delta t^\alpha.
\]

For that purpose, decompose $b_k$ as follows:
\[
b_k=b_k^1+b_k^2,
\]
with
\begin{align*}
b_k^1&=\int_{k\Delta t}^{(k+1)\Delta t}\E\bigl[\langle Du^{(\Delta t)}(T-t,\tilde{X}(t)),(I-S_{\Delta t})\Psi_{\Delta t}(X_k)\rangle\bigr] dt,\\
b_k^2&=\int_{k\Delta t}^{(k+1)\Delta t}\E\bigl[ \langle Du^{(\Delta t)}(T-t,\tilde{X}(t)),\Psi_{\Delta t}(\tilde{X}(t))-\Psi_{\Delta t}(X_k)\rangle\bigr] dt.
\end{align*}

Introduce real-valued functions $\Psi_{\Delta t}^j(\cdot)=\langle \Psi_{\Delta t}(\cdot),e_j\rangle$, for all $j\in\N$. Using It\^o formula, for $t\in[k\Delta t,(k+1)\Delta t]$,
\begin{align*}
\Psi_{\Delta t}^j(\tilde{X}(t))-\Psi_{\Delta t}^j(X_k)&=\int_{k\Delta t}^{t}\frac{1}{2}\sum_{i\in\N}D^2\Psi_{\Delta t}^{j}(S_{\Delta t}e_i,S_{\Delta t}e_i)ds\\
&+\int_{k\Delta t}^{t}\langle D\Psi_{\Delta t}^j(\tilde{X}(s)),S_{\Delta t}AX_k\rangle ds+\int_{k\Delta t}^{t}\langle D\Psi_{\Delta t}^{j}(\tilde{X}(s)),S_{\Delta t}\Psi_{\Delta t}(X_k)\rangle ds\\
&+\int_{k\Delta t}^{t}\langle D\Psi_{\Delta t}^{j}(\tilde{X}(s)),S_{\Delta t}dW(s)\rangle.
\end{align*}
This expansion gives the decomposition
\[
b_k^2=b_k^{2,1}+b_k^{2,2}+b_k^{2,3}+b_k^{2,4},
\]
with
\begin{align*}
b_k^{2,1}&=\int_{k\Delta t}^{(k+1)\Delta t}\int_{k\Delta t}^{t}\sum_{i\in\N}\frac{1}{(1+\lambda_i\Delta t)^2}\E\bigl[\langle Du^{(\Delta t)}(T-t,\tilde{X}(t)),D^2\Psi_{\Delta t}(\tilde{X}(s)).(e_i,e_i)\rangle \bigr] ds dt,\\
b_k^{2,2}&=\int_{k\Delta t}^{(k+1)\Delta t}\int_{k\Delta t}^{t}\E\bigl[\langle Du^{(\Delta t)}(T-t,\tilde{X}(t)),D\Psi_{\Delta t}(\tilde{X}(s)).(S_{\Delta t}AX_k)\rangle\bigr]dsdt,\\
b_k^{2,3}&=\int_{k\Delta t}^{(k+1)\Delta t}\int_{k\Delta t}^{t}\E\bigl[ \langle Du^{(\Delta t)}(T-t,\tilde{X}(t)),D\Psi_{\Delta t}(\tilde{X}(s)).(S_{\Delta t}\Psi_{\Delta t}(X_k))\rangle\bigr] ds dt,\\
b_k^{2,4}&=\int_{k\Delta t}^{(k+1)\Delta t}\E\bigl[\sum_{j=1}^{\infty}\langle Du^{(\Delta t)}(T-t,\tilde{X}(t)),e_j\rangle \int_{k\Delta t}^{t}\langle D\Psi_{\Delta t}^{j}(\tilde{X}(s)),S_{\Delta t}dW(s)\rangle \bigr] dt.
\end{align*}

\subsubsection*{Treatment of $b_k^1$}

Thanks to Theorem~\ref{th:Du} and Lemma~\ref{lem:borneXtilde-implicit},
\begin{align*}
|b_k^{1}|&\le C_{2\alpha}(T,\Delta t_{0}, x|_E)\int_{k\Delta t}^{(k+1)\Delta t}\frac{1}{(T-t)^{2\alpha}}dt\bigl(\E|(-A)^{-2\alpha}(I-S_{\Delta t})\Psi_{\Delta t}(X_k)|_H^2\bigr)^{\frac12}\\
&\le C_{2\alpha}(T,\Delta t_{0},|x|_E)\int_{k\Delta t}^{(k+1)\Delta t}\frac{1}{(T-t)^{2\alpha}}dt\ \Delta t^{2\alpha}.
\end{align*}

\subsubsection*{Treatment of $b_k^{2,1}$}

Let $\alpha\in[0,\frac12)$. Thanks to Theorem~\ref{th:Du} and Lemma~\ref{lem:borneXtilde-implicit}, and to the inequality $|D^2\Psi_{\Delta t}(x).(e_i,e_i)|_H\le C(1+|x|_E)^M|e_i|_E^2$,
\begin{align*}
|b_{k}^{2,1}|&\le C_0(T,\Delta t_0,|x|_E)\sum_{i\in\N}\frac{\Delta t^2}{(1+\Delta t\lambda_i)^2}\le C_\alpha(T,\Delta t_0,|x|_E)\sum_{i\in\N}\frac{1}{\lambda_i^{1-\alpha}}\ \Delta t^{1+\alpha}.
\end{align*}

\subsubsection*{Treatment of $b_k^{2,2}$}

A decomposition $b_k^{2,2}=b_k^{2,2,1}+b_k^{2,2,2}+b_k^{2,2,3}$ into three terms is required:
\begin{align*}
b_k^{2,2,1}&=\int_{k\Delta t}^{(k+1)\Delta t}\int_{k\Delta t}^{t}\E\bigl[\langle Du^{(\Delta t)}(T-t,\tilde{X}(t)),D\Psi_{\Delta t}(\tilde{X}(s)).(AS_{\Delta t}^{k+1}x)\rangle\bigr]dsdt,\\
b_k^{2,2,2}&=\int_{k\Delta t}^{(k+1)\Delta t}\int_{k\Delta t}^{t}\E\bigl[\langle Du^{(\Delta t)}(T-t,\tilde{X}(t)),D\Psi_{\Delta t}(\tilde{X}(s))\bigl(\Delta t\sum_{\ell=0}^{k-1}AS_{\Delta t}^{k-\ell+1}\Psi_{\Delta t}(X_\ell)\bigr)\rangle \bigr]dsdt,\\
b_k^{2,2,3}&=\int_{k\Delta t}^{(k+1)\Delta t}\int_{k\Delta t}^{t}\E\bigl[\langle Du^{(\Delta t)}(T-t,\tilde{X}(t)),D\Psi_{\Delta t}(\tilde{X}(s)).\bigl(\sum_{\ell=0}^{k-1}\int_{\ell\Delta t}^{(\ell+1)\Delta t}AS_{\Delta t}^{k-\ell+1}dW(r)\bigr)\rangle\bigr]dsdt.
\end{align*}

Let $\alpha\in[0,\frac12)$. Thanks to Theorem~\ref{th:Du}, Lemma~\ref{lem:borneXtilde-implicit}, and to inequalities already used above, the terms $b_k^{2,2,1}$ and $b_k^{2,2,2,}$ are treated as follows. First,
\begin{align*}
|b_k^{2,2,1}|&\le C_0(T,\Delta t_0,|x|_E)\Delta t^2 \|A^{2\alpha}S_{\Delta t}^{k}\|_{\mathcal{L}(H)}\|A^{1-2\alpha}S_{\Delta t}\|_{\mathcal{L}(H)}|x|_H\\
&\le C_\alpha(T,\Delta t_{0}, |x|_E)\Delta t^{1+2\alpha}\frac{1}{(k\Delta t)^{2\alpha}}.
\end{align*}

Second,
\begin{align*}
|b_k^{2,2,2}|&\le C_0(T,\Delta t_0,|x|_E)\Delta t^2 \bigl(\E|\Delta t\sum_{\ell=0}^{k-1}AS_{\Delta t}^{k-\ell+1}\Psi_{\Delta t}(X_\ell)|_H^2\bigr)^{\frac12} dsdt\\
&\le C_0(T,\Delta t_0,|x|_E)\Delta t^2 \Delta t\sum_{\ell=0}^{k-1}\|A^{2\alpha}S_{\Delta t}^{k}\|_{\mathcal{L}(H)}\|A^{1-2\alpha}S_{\Delta t}\|_{\mathcal{L}(H)}\\
&\le C(T,\Delta t_0,|x|_E)\Delta t^{1+2\alpha},
\end{align*}
using that $\Delta t\sum_{\ell=0}^{k-1}\frac{1}{((k-\ell)\Delta t)^{2\alpha}}\le C_\alpha(T)<\infty$ for $\alpha\in[0,\frac12)$.

It remains to treat $b_{k}^{2,2,3}$. Using the Malliavin calculus duality formula~\eqref{eq:malliavin-duality} and the chain rule,
\begin{align*}
b_k^{2,2,3}&=\int_{t_k}^{t_{k+1}}\int_{t_k}^{t}\sum_{\ell=0}^{k-1}\int_{t_\ell}^{t_{\ell+1}}\sum_{j\in\N}\E\bigl[D^2u^{(\Delta t)}(T-t,\tilde{X}(t)).\bigl(D\Psi_{\Delta t}(\tilde{X}(s)).(AS_{\Delta t}^{k-\ell+1}e_j),\mathcal{D}_r^{e_j}\tilde{X}(t)\bigr)\bigr]drdsdt \\
&+\int_{t_k}^{t_{k+1}}\int_{t_k}^{t}\sum_{\ell=0}^{k-1}\int_{t_\ell}^{t_{\ell+1}}\sum_{j\in\N}\E\bigl[\langle Du^{(\Delta t)}(T-t,\tilde{X}(t)),D^2\Psi_{\Delta t}(\tilde{X}(s)).(AS_{\Delta t}^{k-\ell+1}e_j,\mathcal{D}_r^{e_j}\tilde{X}(s))\rangle \bigr]drdsdt.
\end{align*}
Let $\eta\in(\frac14,1)$, which allows us to use inequality~\eqref{eq:inequality1}. Thanks to Theorems~\ref{th:Du} and~\ref{th:D2u}, Lemmas~\ref{lem:borneXtilde-implicit} and~\ref{lem:malliavin},
\begin{align*}
|b_k^{2,2,3}|&\le \Delta t^2\sum_{\ell=0}^{k-1}\sum_{j\in\N}\bigl(\Delta tC_0(T,\Delta t_0,|x|_E)+\int_{k\Delta t}^{(k+1)\Delta t}\frac{C_\eta(T,\Delta t_0,|x|_E)}{(T-t)^{\eta}}dt\bigr)|AS_{\Delta t}^{k-\ell+1}e_j|_H\\
&\le C(T,\Delta t_0,|x|_E)\int_{k\Delta t}^{(k+1)\Delta t}\frac{1}{(T-t)^\eta}dt\ \Delta t\sum_{\ell=0}^{k-1}\|(-A)^{2\alpha}S_{\Delta t}^{k-\ell}\|_{\mathcal{L}(H)}\ \sum_{j\in\N}\frac{\lambda_j^{1-2\alpha}\Delta t}{1+\lambda_j\Delta t} \\
&\le C_\alpha(T,\Delta t_0,|x|_E)\int_{k\Delta t}^{(k+1)\Delta t}\frac{1}{(T-t)^\eta}dt \Delta t^{2\alpha}.
\end{align*}

Finally,
\[
|b_k^{2,2}|\le C_{\alpha}(T,\Delta t_0,|x|_E)\int_{k\Delta t}^{(k+1)\Delta t}\bigl(1+\frac{1}{t^{2\alpha}}+\frac{1}{(T-t)^{2\alpha}}\bigr)dt\ \Delta t^{2\alpha}.
\]

\subsubsection*{Treatment of $b_k^{2,3}$}

Applying Theorem~\ref{th:Du} and Lemma~\ref{lem:borneXtilde-implicit} directly gives
\[
|b_k^{2,3}|\le C_0(T,\Delta t_0,|x|_E)\Delta t^2.
\]

\subsubsection*{Treatment of $b_k^{2,4}$}

Using the Malliavin calculus duality formula~\eqref{eq:malliavin-duality} and the chain rule,
\begin{align*}
b_k^{2,4}&=\int_{k\Delta t}^{(k+1)\Delta t}\E\bigl[\sum_{j\in\N}\langle Du^{(\Delta t)}(T-t,\tilde{X}(t)),e_j\rangle \int_{k\Delta t}^{t}\langle D\Psi_{\Delta t}^{j}(\tilde{X}(s)),S_{\Delta t}dW(s)\rangle \bigr] dt\\
&=\int_{k\Delta t}^{(k+1)\Delta t}\int_{k\Delta t}^{t}\sum_{i,j\in\N}\E\bigl[D^2u^{(\Delta t)}(T-t,\tilde{X}(t)).(e_j,\mathcal{D}_s^{e_i}\tilde{X}(t))\langle D\Psi_{\Delta t}^{j}(\tilde{X}(s)),S_{\Delta t}e_i\rangle \bigr]dsdt\\
&=\int_{k\Delta t}^{(k+1)\Delta t}\int_{k\Delta t}^{t}\sum_{i\in\N}\E\bigl[D^2u^{(\Delta t)}(T-t,\tilde{X}(t)).(D\Psi_{\Delta t}(\tilde{X}(s))S_{\Delta t}e_i,\mathcal{D}_s^{e_i}\tilde{X}(t))\bigr]dsdt\\
&=\int_{k\Delta t}^{(k+1)\Delta t}\int_{k\Delta t}^{t}\sum_{i\in\N}\E\bigl[D^2u^{(\Delta t)}(T-t,\tilde{X}(t)).(D\Psi_{\Delta t}(\tilde{X}(s))S_{\Delta t}e_i,S_{\Delta t}e_i)\bigr]dsdt,
\end{align*}
indeed $\mathcal{D}_s\tilde{X}(t)=S_{\Delta t}$ for $k\Delta t\le s\le t\le (k+1)\Delta t$.

Let $\alpha\in(0,\frac12)$. Thanks to Theorem~\ref{th:D2u} and Lemma~\ref{lem:borneXtilde-implicit},
\begin{align*}
|b_k^{2,4}|&\le C_{0,\alpha}(T,|x|_E)\int_{k\Delta t}^{(k+1)\Delta t}\frac{1}{(T-t)^\alpha}dt\sum_{i\in\N}\frac{\Delta t}{\lambda_i^\alpha(1+\lambda_i\Delta t)^2}\\
&\le C_\alpha(T,\Delta t_0,|x|_E)\int_{k\Delta t}^{(k+1)\Delta t}\frac{1}{(T-t)^\alpha}dt \sum_{i=1}^{\infty}\frac{1}{\lambda_i^{1-\alpha}}\Delta t^{2\alpha}.
\end{align*}

\subsubsection*{Conclusion}
Gathering estimates above, for all $\alpha\in[0,\frac12)$,
\begin{align*}
\sum_{k=1}^{N-1}|b_k|\le C_\alpha(T,\Delta t_0,|x|_E)\Delta t^\alpha \sum_{k=1}^{N-1}\int_{k\Delta t}^{(k+1)\Delta t}\frac{1}{t^{\beta_1(\alpha)}(T-t)^{\beta_2(\alpha)}}dt,
\end{align*}
with two parameters $\beta_1(\alpha),\beta_2(\alpha)\in[0,1)$. Therefore
\[
\sum_{k=1}^{N-1}\int_{k\Delta t}^{(k+1)\Delta t}\frac{1}{t^{\beta_1(\alpha)}(T-t)^{\beta_2(\alpha)}}dt\le \int_{0}^{T}\frac{1}{t^{\beta_1(\alpha)}(T-t)^{\beta_2(\alpha)}}dt<\infty.
\]

This concludes the second part of the proof of Lemma~\ref{lem:akbkck}.

\subsubsection{Proof of Lemma~\ref{lem:akbkck}, Part 3}\label{sec:lem_ck}

It remains to treat $\sum_{k=1}^{N-1}|c_k|$. Let $\alpha\in(0,\frac12)$, and let $\epsilon\in(\frac12-\alpha,1-2\alpha)$ be an auxiliary parameter. Thanks to Theorem~\ref{th:D2u}, and to Lemma~\ref{lem:borneXtilde-implicit}, using $(-A)^{-\beta}e_j=\lambda_j^{-\beta}e_j$,
\begin{align*}
|c_k|&\le C_{2\alpha+\epsilon}(T,\Delta t_0,|x|_E)\int_{k\Delta t}^{(k+1)\Delta t}\frac{1}{(T-t)^{2\alpha+\epsilon}}dt\sum_{j\in\N}\frac{\lambda_j^{-2\alpha-\epsilon}\bigl(2\lambda_j\Delta t+(\lambda_j\Delta t)^2\bigr)}{(1+\lambda_j\Delta t)^2}\\
&\le C_{2\alpha+\epsilon}(T,\Delta t_0,|x|_E)\int_{k\Delta t}^{(k+1)\Delta t}\frac{1}{(T-t)^{2\alpha+\epsilon}}dt\sum_{j\in\N}\frac{\Delta t^{\alpha}}{\lambda_j^{\alpha+\epsilon}}\frac{2(\lambda_j\Delta t)^{1-\alpha}+(\lambda_j\Delta t)^{2-\alpha}}{(1+\lambda_j\Delta t)^2}.
\end{align*}
Note that $\sum_{j\in\N}\frac{1}{\lambda_j^{\alpha+\epsilon}}<\infty$, since $\alpha+\epsilon>\frac12$. In addition, $\underset{z\ge 0}\sup~\frac{z^\beta}{(1+z)^2}<\infty$ for $\beta\in[0,2]$. Thus
\[
\sum_{k=1}^{N-1}|c_k|\le C_{\alpha}(T,\Delta t_0,|x|_E)\Delta t^\alpha.
\]

\subsubsection{Conclusion}

Gathering the estimates on $\sum_{k=1}^{N-1}|a_k|$, $\sum_{k=1}^{N-1}|b_k|$ and $\sum_{k=1}^{N-1}|c_k|$, concludes the proof of Lemma~\ref{lem:akbkck}.

\subsection{Proof of Lemmas~\ref{lem:borneXtilde-implicit} and~\ref{lem:malliavin}}\label{sec:impl3}

\begin{proof}[Proof of Lemma~\ref{lem:borneXtilde-implicit}]

Let $\bigl(\omega_n\bigr)_{n\in\N}$ be given by
\[
\omega_n=\sum_{k=0}^{n-1}S_{\Delta t}^{n-k-1}\bigl(W((k+1)\Delta t)-W(k\Delta t)\bigr),
\]
which solves $\omega_{n+1}=S_{\Delta t}\omega_n+S_{\Delta t}\bigl(W((n+1)\Delta t)-W(n\Delta t)\bigr)$.

Then (see~\cite{Brehier_Goudenege:18}), there exists $C(T,\Delta t_0,M)\in(0,\infty)$ such that
\[
\E\bigl[\underset{0\le n\le N}\sup~|X_n|_E^M+\underset{0\le n\le N}\sup~|\omega_n|_E^M\bigr]\le C(T,\Delta t_0,M)(1+|x|_E)^M.
\]

Recall that $AS_{\Delta t}=\frac{S_{\Delta t}-I}{\Delta t}$, and that $\|S_{\Delta t}\|_{\mathcal{L}(E)}\le 1$.

For $n\Delta t\le t\le (n+1)\Delta t\le T$, using~\eqref{eq:scheme_implicit_tilde},
\begin{align*}
\bigl(\E|\tilde{X}(t)|_E^M\bigr)^{\frac{1}{M}}&\le \bigl(\E|X_n|_E^M\bigr)^{\frac{1}{M}}+2\frac{t-n\Delta t}{\Delta t}\bigl(\E|X_n|_E^M\bigr)^{\frac{1}{M}}+(t-n\Delta t)\bigl(\E|\Psi_{\Delta t}(X_n)|_E\bigr)^{\frac{1}{M}}\\
&+\bigl(\E|S_{\Delta t}(W(t)-W(n\Delta t)|_E^M\bigr)^{\frac{1}{M}}\\
&\le C(1+|x|_E+|x|_E^3)+\frac{(t-n\Delta t)^{\frac12}}{\Delta t^{\frac12}}\bigl(\E|S_{\Delta t}(W((n+1)\Delta t)-W(n\Delta t)|_E^M\bigr)^{\frac{1}{M}},
\end{align*}
and it remains to observe that $S_{\Delta t}(W((n+1)\Delta t)-W(n\Delta t)=\omega_{n+1}-S_{\Delta t}\omega_n$, and to use the estimate above.

This concludes the proof of Lemma~\ref{lem:borneXtilde-implicit}.
\end{proof}

\begin{proof}[Proof of Lemma~\ref{lem:malliavin}]
Introduce the following notation: for every $s\in[0,T]$, $\ell_s=\lfloor \frac{s}{\Delta t}\rfloor\in\left\{0,\ldots,N\right\}$.

If $\ell_s\ge k$, then $s\ge k\Delta t$, and by definition of the Malliavin derivative, $\mathcal{D}_sX_k=0$.

Note that $X_{\ell_s+1}=S_{\Delta t}\Phi_{\Delta t}(X_{\ell_s})+S_{\Delta t}(W((\ell_s+1)\Delta t)-W(\ell_s\Delta t))$, thus $\mathcal{D}_sX_{\ell_s+1}=S_{\Delta t}$.

Finally, if $k\ge \ell_s+1$, using the chain rule, for all $h\in H$,
\[
\mathcal{D}_s^hX_{k+1}=\mathcal{D}_s^h\bigl(S_{\Delta t}\Phi_{\Delta t}(X_k)\bigr)=S_{\Delta t}\Phi_{\Delta t}'(X_k)\mathcal{D}_s^hX_k,
\]
and since $\Phi_{\Delta t}$ is globally Lipschitz continuous, thanks to Lemma~\ref{lem:rappel},
\[
|\mathcal{D}_s^h(X_{k+1})|_H \le e^{\Delta t}|\mathcal{D}_s^ hX_k|\le e^T |\mathcal{D}_s^hX_{\ell_s+1}|_H\le e^T|h|_H.
\]

Finally, for $0\le s<k\Delta t\le t\le (k+1)\Delta t\le T$, and $h\in H$, using the chain rule and~\eqref{eq:scheme_implicit_tilde},
\begin{align*}
|\mathcal{D}_s^h\tilde{X}(t)|_H&\le |\mathcal{D}_s^h\tilde{X}(t)|_H+(t-n\Delta t)|AS_{\Delta t}\mathcal{D}_s^h\tilde{X}(t)|_H\\
&+(t-n\Delta t)|S_{\Delta t}\Psi_{\Delta t}'(X_k)\mathcal{D}_s^hX_k|_H\\
&\le (3+\Delta t|\Psi_{\Delta t}'(X_k)|_E)|\mathcal{D}_s^hX_k|_H,
\end{align*}
and using the estimate above concludes the proof of Lemma~\ref{lem:malliavin}.
\end{proof}

\section*{Acknowledgments}
The authors want to thank Arnaud Debussche for discussions, and for suggesting the approach to prove Lemma~\ref{lem:aux-eta}. They also wish to thank Jialin Hong and Jianbao Cui for helpful comments and suggestions to improve the presentation of the manuscript.

\end{document}